\newtheorem{theorem}{Theorem}[section]
\newtheorem{proposition}[theorem]{\bf Proposition}
\newtheorem{lemma}[theorem]{\bf Lemma}
\newtheorem{corollary}[theorem]{\bf Corollary}
\theoremstyle{definition}
\newtheorem{definition}[theorem]{\bf Definition}
\newtheorem{remark}[theorem]{\bf Remark}
\newcommand{\N}{\mathbb N}
\newcommand{\Z}{\mathbb Z}
\newcommand{\R}{\mathbb R}
\newcommand{\Q}{\mathbb Q}
\renewcommand{\P}{\mathbb P}
 \DeclareMathOperator{\ord}{ord}
 \DeclareMathOperator{\supp}{supp}
\renewcommand{\t}{\, | \,}
\numberwithin{equation}{section}
\begin{document}

\title[Sets of lengths are (almost) arithmetical progressions]{A characterization of  Krull monoids for which sets of lengths are (almost) arithmetical progressions}

\author[A. Geroldinger and W.A. Schmid]{Alfred Geroldinger and Wolfgang Alexander Schmid}

\begin{abstract}
Let $H$ be a Krull monoid with finite class group $G$ and suppose that every class contains a prime divisor. Then sets of lengths in $H$ have a well-defined structure which depends only on the class group $G$. With methods from additive combinatorics we establish a characterization of those class groups $G$ guaranteeing that all sets of lengths are (almost) arithmetical progressions.
\end{abstract}

\address[A.G.]{Institute for Mathematics and Scientific Computing\\ University of Graz, NAWI Graz\\ Heinrichstra{\ss}e 36\\ 8010 Graz, Austria} \email{alfred.geroldinger@uni-graz.at}

\address[W.S.]{Universit{\'e} Paris 13 \\ Sorbonne Paris Cit{\'e} \\ LAGA, CNRS, UMR 7539,  Universit{\'e} Paris 8\\ F-93430, Villetaneuse, France \\ and \\ Laboratoire Analyse, G{\'e}om{\'e}trie et Applications (LAGA, UMR 7539) \\ COMUE  Universit{\'e} Paris Lumi{\`e}res \\  Universit{\'e} Paris 8, CNRS \\  93526 Saint-Denis cedex, France}
\email{schmid@math.univ-paris13.fr}

\subjclass[2010]{11B30, 13A05, 13F05, 20M13}

\thanks{This work was supported by the Austrian Science Fund FWF, Project Number P28864-N35.}

\keywords{Krull monoids,   transfer Krull monoids, sets of lengths, zero-sum sequences}

\maketitle

\section{Introduction} \label{1}

Let $H$ be a Krull monoid with class group $G$ and suppose that every class contains a prime divisor. Then every element has a factorization into irreducibles. If $a = u_1 \cdot \ldots \cdot u_k$ with irreducibles $u_1, \ldots, u_k \in H$, then $k$ is called the factorization length. The set $\mathsf L (a) \subset \N_0$ of all possible factorization lengths is finite and called the set of lengths of $a$. The system $\mathcal L (H) = \{ \mathsf L (a) \mid a \in H \}$ is a well-studied means for describing the arithmetic of $H$. It is classic that $|L|=1$ for all $L \in \mathcal L (H)$ if and only if $|G| \le 2$ and if $|G| \ge 3$, then there are arbitrarily large $L \in \mathcal L (H)$.

Sets of lengths in $H$ can be studied in the associated
monoid of zero-sum sequences $\mathcal B (G)$. The latter  is a Krull monoid again and it is well-known that $\mathcal L (H) = \mathcal L \big( \mathcal B (G) \big)$ (as usual we write $\mathcal L (G)$ for $\mathcal L \big( \mathcal B (G) \big)$. Our results on $\mathcal L (G)$ also apply to the larger class of  transfer Krull monoids over $G$. This will be outlined in Section \ref{2}.

If the group $G$ is infinite, then, by a Theorem of Kainrath, every finite set $L \subset \N_{\ge 2}$ lies in $\mathcal L (H)$ (\cite{Ka99a}, \cite[Theorem 7.4.1]{Ge-HK06a}; for further rings and monoids with this property see \cite{Fr-Na-Ri19a} or \cite[Corollary 4.7]{Go19b}).
Now suppose that the group $G$ is finite. In this case sets of lengths  have a well-defined structure. Indeed,
by the Structure Theorem for Sets of Lengths (Proposition \ref{2.2}.1), the sets in $\mathcal L (G)$ are AAMPs (almost arithmetical multiprogressions) with difference in $\Delta^* (G)$ and some universal bound. By a realization result of the second author, this description is best possible (Proposition \ref{2.2}.2). By definition, the concept of an AAMP comprises APs (arithmetical progressions), AAPs (almost arithmetical progressions), and  AMPs (arithmetical multiprogressions); definitions are gathered in Definition \ref{2.1}. The  goal of this paper is to characterize  those groups  where all sets of lengths are not only AAMPs, but have one of these more special forms.  We formulate the main result of this paper.

\begin{theorem} \label{1.1}
Let $G$ be a finite abelian group.
\begin{enumerate}
\item The following statements are equivalent{\rm \,:}
      \begin{enumerate}
      \item[(a)] All sets of lengths in $\mathcal L (G)$ are {\rm AP}s with difference in $\Delta^* (G)$.

      \item[(b)]  All sets of lengths in $\mathcal L (G)$ are {\rm AP}s.

      \item[(c)] The system of sets of lengths  $\mathcal L (G)$ is additively closed, that is, $L_1 +L_2 \in \mathcal L (G) $ for all $L_1, L_2 \in   \mathcal L (G) $.

      \item[(d)] $G$ is cyclic of order $|G| \le 4$ or isomorphic to a subgroup of  $C_2^3$ or isomorphic to a subgroup of  $C_3^2$.
\end{enumerate}

\item The following statements are equivalent{\rm \,:}
      \begin{enumerate}
      \item[(a)] There is a constant $M \in \mathbb N$ such that all sets of lengths in $\mathcal L (G)$ are {\rm AAP}s with bound $M$.

      \item[(b)] $G$ is isomorphic to a subgroup of $C_3^3$ or isomorphic to a subgroup of $C_4^3$.
      \end{enumerate}

\item For all finite sets $\Delta$ with $\Delta \supset \Delta^* (G)$ the following statements are equivalent{\rm \,:}
      \begin{enumerate}
      \item[(a)] All sets of lengths in $\mathcal L (G)$ are {\rm AMP}s with difference in $\Delta$.

      \item[(b)] $G$ is cyclic of order $|G| \le 6$  or isomorphic to a subgroup of $C_2^3$ or isomorphic to a subgroup of $C_3^2$.
      \end{enumerate}
\end{enumerate}
\end{theorem}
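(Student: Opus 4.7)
The proof splits into three parts, each a group-theoretic characterization of a structural feature of $\mathcal L(G)$. In every part, one direction (from (d), respectively (b), to the property on $\mathcal L(G)$) is accessible in the sense that it requires an explicit description of $\mathcal L(G)$ for the small groups in the list. I would obtain this by combining existing computations in the literature (for cyclic class groups of order at most~$6$, for $C_2^3$ and $C_3^2$, and for the rank-three $p$-elementary groups $C_3^3$ and $C_4^3$) with direct zero-sum arguments in $\mathcal B(G)$. The crucial input is a precise description of the set of minimal distances $\Delta^*(G)$ for each of these groups, since $\Delta^*(G)$ controls the admissible differences of any AP, AAP, or AMP appearing as a length set.

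For the equivalences inside part~(1), the implication (a)$\Rightarrow$(b) is immediate, and (b)$\Rightarrow$(a) uses that the difference of any AP of length at least two in $\mathcal L(G)$ must lie in $\Delta^*(G)$. The subtler direction (a)$\Rightarrow$(c) is not automatic since sums of APs with distinct differences need not be APs, so one must exploit that for the groups listed in (d) all realized AP differences combine coherently; conversely, (c)$\Rightarrow$(b) reduces to showing that additive closure of $\mathcal L(G)$ is incompatible with length sets that contain interior gaps, via a short comparison of $L$ and $L+L$.

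The hard direction in each part proceeds by contrapositive. By subgroup monotonicity $\mathcal L(H) \subseteq \mathcal L(G)$ (and the analogous statement for quotients), it suffices to rule out a finite list of minimal ``forbidden'' groups: for part~(1), groups such as $C_5$, $C_2 \oplus C_4$, $C_2^4$, $C_3^3$; for part~(2), $C_5^3$, $C_3^4$, $C_4^4$; for part~(3), $C_7$, $C_3^3$, $C_4^3$. For each such $G$ I would exhibit zero-sum sequences $B \in \mathcal B(G)$ whose length sets $\mathsf L(B)$ witness the failure---not being an AP for part~(1), admitting no uniform AAP bound for part~(2), or carrying a difference outside any prescribed $\Delta$ for part~(3).

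The main obstacle is the construction of these witnesses in part~(2), where for each excluded group one must produce an \emph{infinite family} of length sets with unbounded deviation from any AP. A natural strategy is to fix a seed $B_0 \in \mathcal B(G)$ whose length set exhibits a gap not coming from $\Delta^*(G)$, and then examine iterated sequences $B_0^n U^m$ for carefully chosen atoms $U$; the length sets of these iterates inherit deviations growing with $n$ and $m$, ruling out any uniform bound~$M$. Identifying the correct seed in each forbidden group, and verifying that the deviations cannot be absorbed by any constant, is the combinatorial core where the precise computation of $\Delta^*(G)$, the Davenport constant, and related zero-sum invariants becomes essential.
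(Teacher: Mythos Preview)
Your overall architecture (explicit descriptions for the ``easy'' direction, contrapositive via minimal forbidden subgroups for the hard direction) matches the paper's, but there are two genuine gaps.

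First, your lists of minimal forbidden subgroups are wrong in parts~(2) and~(3). For part~(2), the allowed groups are subgroups of $C_3^3$ or of $C_4^3$; since $C_5$ is a subgroup of neither, it is already forbidden, and so is $C_2^4$ (note $C_2^4$ is \emph{not} a subgroup of $C_4^3$, which has $2$-rank~$3$). The correct minimal obstructions are therefore $C_n$ for $n\ge 5$, $C_2^4$, and $C_3^4$; your $C_5^3$ and $C_4^4$ are far from minimal. The paper handles exactly these three cases (citing an earlier paper for $C_n$, and using Propositions~\ref{3.6}.2 and~\ref{3.7}.2 for $C_2^4$ and $C_3^4$, where explicit families $\mathsf L(U_4^{2k}U_2)$ and $\mathsf L(U^{3k}V_1)$ are computed to be AMPs with period $\{0,1,3\}$, hence not AAPs for any fixed bound once $k$ is large). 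For part~(3) the situation is similar: $C_4^3$ is not minimal (already $C_2\oplus C_4$ is forbidden), and you omit $C_2^4$ and all rank-$\ge 2$ groups with $\exp(G)\in[4,6]$, which the paper treats via Proposition~\ref{3.5}.

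Second, your plan for the direction (b)$\Rightarrow$(a) in part~(2) is not viable. You propose to obtain it from an ``explicit description of $\mathcal L(G)$'' for $C_3^3$ and $C_4^3$, but no such description is known for $C_4^3$ (or even $C_4^2$), and this would be a major undertaking. The paper's argument is one line: by Proposition~\ref{2.3}.2 one has $\Delta^*(G)\subset\{1,2\}$ for every subgroup of $C_3^3$ or $C_4^3$, and then the Structure Theorem (Proposition~\ref{2.2}.1) says every $L\in\mathcal L(G)$ is an AAMP with difference in $\{1,2\}$; but an AAMP with difference $1$ or $2$ is automatically an AAP (if $d=1$ the period is forced to be $\{0,1\}$; if $d=2$ the period is $\{0,2\}$ or $\{0,1,2\}$, and in the latter case the central AMP is an interval, so the whole set is an AAP with difference~$1$). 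This is the key structural observation you are missing.

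A smaller point: in part~(1) your claimed implication (b)$\Rightarrow$(a), via ``the difference of any AP in $\mathcal L(G)$ must lie in $\Delta^*(G)$'', is not justified as stated, since a priori an AP could have difference in $\Delta(G)\setminus\Delta^*(G)$. The paper avoids this by closing the cycle through~(d): once (d) is established, Proposition~\ref{3.1} gives explicit descriptions and Proposition~\ref{2.3} identifies $\Delta^*(G)$, from which (a) follows. Likewise (c)$\Leftrightarrow$(d) is quoted from \cite{Ge-Sc16b} rather than argued directly.
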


A central topic in the study of sets of lengths is the Characterization Problem (for recent progress see \cite{B-G-G-P13a, Ge-Sc16a, Ge-Zh17b, Zh18a, Zh18b}) which reads as follows:

\begin{enumerate}
\item[]
{\it Let $G$ be a finite abelian group with $\mathsf D (G)\ge 4$, and let  $G'$ be an abelian group with $\mathcal L(G) = \mathcal L(G')$.
Does it follow that $G \cong G'$?}
\end{enumerate}

A finite abelian group $G$ has Davenport constant $\mathsf D (G) \le 3$ if and only if either $|G|\le 3$ or $G \cong C_2 \oplus C_2$. Since $\mathcal L (C_1)=\mathcal L (C_2)$ and $\mathcal L (C_3)=\mathcal L (C_2\oplus C_2)$ (Proposition \ref{3.1}), small groups require special attention in the study of the Characterization Problem.
As a consequence of Theorem \ref{1.1} we obtain an affirmative answer to the Characterization Problem for all involved small groups.

\begin{corollary} \label{1.2}
Let $G$ be a finite abelian group with Davenport constant $\mathsf D (G) \ge 4$ and suppose that $\mathcal L (G)$ satisfies one of the properties characterized in Theorem \ref{1.1}. If $G'$ is any abelian group such that $\mathcal L (G)=\mathcal L (G')$, then $G \cong G'$.
\end{corollary}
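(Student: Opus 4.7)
The plan is to apply Theorem \ref{1.1} to $G'$ itself, and then to pin $G'$ down inside the (short) list of candidates it produces using invariants that are determined by $\mathcal L(G)$.

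First I would show that $G'$ is finite: if it were infinite, then Kainrath's theorem, recalled in the introduction, would force $\mathcal L(G') = \mathcal L(G)$ to contain every finite subset of $\N_{\ge 2}$, which is incompatible with any of the structural conclusions of Theorem \ref{1.1} (satisfied by $\mathcal L(G)$ by hypothesis). Next, each property of $\mathcal L(G)$ isolated in parts 1--3 of Theorem \ref{1.1} is expressed purely in terms of the system of sets of lengths, and therefore passes from $\mathcal L(G)$ to $\mathcal L(G')$. Applying Theorem \ref{1.1} to $G'$ then places $G'$ in the explicit short list appearing in the corresponding condition (d) or (b).

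Finally, I would use invariants of $\mathcal L(G)$ to distinguish $G$ from the other candidates in that list. The Davenport constant $\mathsf D(G)$ is such an invariant via the elasticity identity $\rho(G) = \mathsf D(G)/2$, valid for $|G| \ge 3$; and the set of distances $\Delta(G) = \bigcup_{L \in \mathcal L(G)} \Delta(L)$, the set $\Delta^*(G)$, and the unions of sets of lengths are further invariants, all of which must coincide on $G$ and $G'$. The corollary thus reduces to the finite check that in each of the three short lists, once restricted to Davenport constant $\ge 4$, no two non-isomorphic groups share all these invariants.

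The principal obstacle is exactly this last check, since several pairs of candidates coincide in Davenport constant: $C_4$ and $C_2^3$ (both with $\mathsf D = 4$) appear in all three parts, $C_3^3$ and $C_4^2$ (both with $\mathsf D = 7$) appear in part~2, and $C_5$, $C_3^2$, $C_2 \oplus C_4$ (all with $\mathsf D = 5$) occur in parts 2 and~3. For each such collision one needs a secondary invariant; for instance, $\Delta(C_2^r) = \{1\}$ whereas $\max \Delta(C_n) \ge 2$ for $n \ge 4$, which immediately separates $C_4$ from $C_2^3$. The remaining collisions can be handled by explicit computations of $\Delta$, $\Delta^*$, or the unions of lengths, many of which are already available in the recent papers on the Characterization Problem cited in the introduction.
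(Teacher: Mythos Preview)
Your overall strategy coincides with the paper's: show $G'$ is finite via Kainrath, transfer the relevant property of Theorem~\ref{1.1} to $G'$, pin down the Davenport constant from $\mathcal L$ (the paper uses $\rho_2(G)=\mathsf D(G)$ rather than $\rho(G)=\mathsf D(G)/2$, but this is immaterial), and then separate the remaining collisions by secondary invariants.

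However, your concrete example is wrong: the claim $\Delta(C_2^r)=\{1\}$ fails already for $r=3$. From Proposition~\ref{3.1}.4 the system $\mathcal L(C_2^3)$ contains $\{2,4\}$ (take $y=0$, $k=1$ in the family $y+2k+2\cdot[0,k]$), so $2\in\Delta(C_2^3)$ and in fact $\Delta(C_2^3)=\Delta(C_4)=\{1,2\}$. Thus $\Delta$ does \emph{not} separate $C_4$ from $C_2^3$. The paper settles this collision directly from the explicit descriptions in Proposition~\ref{3.1}; for instance $\{3,4,5,6\}\in\mathcal L(C_2^3)\setminus\mathcal L(C_4)$.

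Your list of collisions in parts~2 and~3 is also longer than necessary because you do not exploit the nesting of the three properties. The paper first disposes of part~1 and then, when treating parts~2 and~3, assumes in addition that $\mathcal L(G)$ does \emph{not} satisfy the property in part~1 (otherwise the previous case applies). This removes $C_4$, $C_2^3$, and $C_3^2$ from the later lists, so in part~2 the only surviving Davenport-constant collision is $C_3^3$ versus $C_4^2$ (separated by $\max\Delta$, namely $2$ versus $3$), and in part~3 only $C_5$ and $C_6$ remain, with distinct Davenport constants. With this reduction the ``finite check'' you allude to becomes essentially a single comparison.
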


In Section \ref{2} we gather the required tools for studying sets of lengths (Propositions \ref{2.2}, \ref{2.3}, and \ref{2.4}). The proof of Theorem \ref{1.1} requires methods from additive combinatorics and is given in Section \ref{3}. Several properties occurring in Theorem \ref{1.1} can be characterized by further arithmetical invariants. We briefly outline this in Remark \ref{3.8} where we also discuss the property of being additively closed occurring in Theorem \ref{1.1}.1.(c).

\section{Background on sets of lengths} \label{2}

Let $\N$ denote the set of positive integers, $\P \subset \N$ the set of prime numbers and put $\N_0 = \N \cup \{0\}$. For real numbers $a,\,b \in \R$, we set $[a, b ] = \{ x \in \Z \mid a \le x \le b \}$. Let  $A, B \subset \mathbb Z$ be subsets of the integers.  We denote by $A+B = \{a+b \mid a \in A, b \in B\}$ their {\it sumset}, and by $\Delta (A)$ the {\it set of $($successive$)$ distances} of $A$ (that is, $d \in \Delta (A)$ if and only if $d = b-a$ with $a, b \in A$ distinct and $[a, b] \cap A = \{a, b\}$). For $k \in \N$, we denote by $k \cdot A = \{k a \mid a \in A \}$ the dilation of $A$ by $k$. If $A \subset \N$, then
\[
\rho (A) = \sup \Big\{ \frac{m}{n} \mid m, n \in A \Big\} = \frac{\sup A}{\min A} \in \Q_{\ge 1} \cup \{\infty\}
\]
is the {\it elasticity} of $A$, and we set $\rho ( \{0\})=1$.

\smallskip
\noindent
{\bf Monoids.} By a  {\it monoid},  we
mean a commutative cancellative semigroup with identity. Let $H$ be a monoid. Then $H^{\times}$ denotes the group of invertible elements of $H$, $\mathcal A (H)$ the set of atoms of $H$, and $H_{\rm red}=H/H^{\times}$ the associated reduced monoid. The factorization monoid $\mathsf Z (H)=\mathcal F (\mathcal A (H_{\rm red}))$ is the free abelian monoid with basis $\mathcal A (H_{\rm red})$ and $\pi \colon  \mathsf Z (H) \to H_{\rm red}$ denotes the canonical epimorphism. For an element $a \in H$,  $\mathsf Z (a) = \pi^{-1}(aH^{\times})$ is the {\it set of factorizations} of $a$ and $\mathsf L (a) = \{|z| = \sum_{u \in \mathcal A (H_{\rm red})} \mathsf v_u (z) \mid z = \prod_{u \in \mathcal A (H_{\rm red})} u^{\mathsf v_u (z)}  \in \mathsf Z (a) \} \subset \N_0$ is the {\it set of lengths} of $a$ (note that $\mathsf L (a) = \{0\}$ if and only if $a \in H^{\times}$). We denote by
\[
\begin{aligned}
\mathcal L (H) & = \left\{ \mathsf L (a) \mid a \in H \right\} \qquad \text{the {\it system of sets of lengths} of} \ H \,, \quad \text{and by} \\
\Delta (H) & = \bigcup_{L \in \mathcal L (H)} \Delta ( L ) \ \subset
\N \qquad \text{the {\it set of distances} of} \ H \,.
\end{aligned}
\]
For $k \in \N$, we set $\rho_k (H) = k$ if $H$ is a group, and
\[
\rho_k (H) = \sup \{ \sup L \mid L \in \mathcal L (H), k \in L \} \in \N \cup \{\infty\}, \quad \text{otherwise}  \,.
\]
Then
\[
\rho (H) = \sup \{ \rho (L) \mid L \in \mathcal L (H) \} = \lim_{k \to \infty} \frac{\rho_k (H)}{k} \in \R_{\ge 1} \cup \{\infty\}
\]
is the {\it elasticity} of $H$.

\smallskip
\noindent {\bf Zero-Sum Theory.} Let $G$ be an additive abelian group,   $G_0 \subset G$ a subset, and let $\mathcal F (G_0)$ be the free abelian monoid with basis $G_0$. In Combinatorial Number Theory, the elements of $\mathcal F(G_0)$ are called \ {\it sequences} over \
$G_0$.  For a sequence
\[
S = g_1 \cdot \ldots \cdot g_l = \prod_{g \in G_0} g^{\mathsf v_g
(S)} \in \mathcal F (G_0) \,,
\]
we set  $-S = (-g_1) \cdot \ldots \cdot (-g_l)$, and we call $|S|  = l = \sum_{g \in G} \mathsf v_g (S) \in \mathbb N_0 \
\text{the \ {\it length} \ of \ $S$,}$
\[
\begin{aligned}
\supp (S)&  = \{g \in G \mid \mathsf v_g (S) > 0 \} \subset G \ \text{the  {\it support} of
$S$} \,,  \mathsf v_g(S) \, \text{ the {\it multiplicity} of $g$ in $S,$}  \\
\sigma (S) & = \sum_{i = 1}^l g_i \ \text{the \ {\it sum} \ of \
$S$} \,, \  \text{and} \ \Sigma (S) = \Big\{ \sum_{i \in I} g_i
\mid \emptyset \ne I \subset [1,l] \Big\} \
\end{aligned}
\]
the  {\it set of subsequence sums} \ of \ $S$. The sequence $S$ is said to be
\begin{itemize}
\item {\it zero-sum free} \ if \ $0 \notin \Sigma (S)$,

\item a {\it zero-sum sequence} \ if \ $\sigma (S) = 0$,

\item a {\it minimal zero-sum sequence} \ if it is a nontrivial zero-sum
      sequence and every proper  subsequence is zero-sum free.
\end{itemize}
The monoid
\[
\mathcal B(G_0) = \{ S  \in \mathcal F(G_0) \mid \sigma (S) =0\} \subset \mathcal F (G_0)
\]
is called the \ {\it monoid of zero-sum sequences} \ over \ $G_0$. As usual  we set, for all $k \in \N$, $\rho_k (G_0) = \rho_k \big( \mathcal B (G_0) \big), \ \rho (G_0) = \rho \big( \mathcal B (G_0) \big), \ \mathcal L (G_0) = \mathcal L ( \mathcal B (G_0) )$, $\mathsf Z (G_0) = \mathsf Z \big( \mathcal B (G_0) \big)$,   and $\Delta (G_0) = \Delta \big( \mathcal B (G_0) \big)$.
The atoms (irreducible elements) of the monoid $\mathcal B (G_0)$ are precisely the
minimal zero-sum sequences over $G_0$, and they will be denoted by $\mathcal A (G_0)$. If $G_0$ is finite, then $\mathcal A (G_0)$ is finite. The  {\it Davenport constant} $\mathsf D (G_0)$ of $G_0$ is the maximal length of an atom whence
\[
\mathsf  D (G_0) = \sup \bigl\{ |U| \, \bigm| \; U \in \mathcal A (G_0) \bigr\} \in \N_0 \cup \{\infty\} \,.
\]
The {\it set of minimal distances} $\Delta^* (G) \subset \Delta (G)$ is  defined as
\[
\Delta^* (G) = \{\min \Delta (G_0) \mid G_0 \subset G \ \text{with} \ \Delta (G_0) \ne \emptyset \} \subset \Delta (G) \,.
\]
A tuple $(e_i)_{i \in I}$ is called a {\it basis} of $G$ if all elements are nonzero and $G = \oplus_{i \in I} \langle e_i \rangle$. For $p \in \P$, let $\mathsf r_p (G)$ denote the $p$-rank of
$G$, $\mathsf r (G) = \sup \{ \mathsf r_p (G) \mid p \in \P \}$ denote the
{\it rank} of $G$.

\smallskip
\noindent
{\bf Transfer Krull monoids.} A  monoid is a {\it Krull monoid} if it is completely integrally closed and satisfies the ascending chain condition on divisorial ideals.
An integral domain $R$ is a Krull domain if and only if its
multiplicative monoid $R \setminus \{0\}$ is a Krull monoid whence every integrally closed noetherian domain is Krull.
Rings of integers, holomorphy rings in algebraic function fields, and
regular congruence monoids in these domains are Krull monoids with finite
class group such that every class contains a prime divisor
(\cite[Section 2.11]{Ge-HK06a}). Monoid domains and monoids of modules that are Krull
 are discussed in \cite{Ch11a, Ba-Wi13a, Ba-Ge14b,Fa06a}. Let $H$ be a Krull monoid with class group $G$ such that every class contains a prime divisor. Then there is a transfer homomorphism $\theta \colon H \to \mathcal B (G)$ which implies that $\mathcal L (H)=\mathcal L (G)$ (\cite[Theorem 3.4.10]{Ge-HK06a}). A {\it transfer Krull monoid} $H$ over $G$ is a (not necessarily commutative) monoid allowing  a weak transfer homomorphism to  $\mathcal B (G)$ whence $\mathcal L (H)=\mathcal L (G)$. Thus Theorem \ref{1.1}  applies to transfer Krull monoids over finite abelian groups.
Recent deep work, mainly by Baeth and Smertnig, revealed that wide classes of non-commutative Dedekind domains are transfer Krull (\cite{Sm13a, Ba-Sm15,Sm19a}). We  refer to  the survey \cite{Ge16c} for a detailed discussion of these and further examples.

\smallskip
\noindent
{\bf Sets of Lengths.}
Let $A \in \mathcal B (G_0)$ and $d = \min \{ |U| \mid U \in \mathcal A (G_0) \}$. If $A = BC$ with $B, C \in \mathcal B (G_0)$, then
\[
\mathsf L (B) + \mathsf L (C) \subset \mathsf L (A) \,.
\]
If $A = U_1 \cdot \ldots \cdot U_k = V_1 \cdot \ldots \cdot V_l$ with $U_1, \ldots, U_k, V_1, \ldots, V_l \in \mathcal A (G_0)$ and $k < l$, then
\[
l d \le \sum_{\nu=1}^l |V_{\nu}| = |A| = \sum_{\nu=1}^k |U_{\nu}| \le k \mathsf D (G_0) \ \text{whence} \
\frac{|A|}{\mathsf D (G_0)} \le \min \mathsf L (A) \le \max \mathsf L (A) \le \frac{|A|}{d} \,.
\]
For sequences over cyclic groups  the $g$-norm plays a similar role as the length does for sequences over arbitrary groups.
Let $g \in G$ with \ $\ord(g) = n \ge 2$. For a
sequence $S = (n_1g) \cdot \ldots \cdot (n_lg) \in \mathcal F
(\langle g \rangle)$, \ where $l \in \N_0$ and $n_1, \ldots, n_l
\in [1, n]$, \ we define
\[
\| S \|_g = \frac{n_1+ \ldots + n_l}n  \,.
\]
Note that $\sigma (S) = 0$ implies that   $n_1 + \ldots + n_l
\equiv 0 \mod n$ whence $\| S \|_g \in \N_0$.  Thus, $\| \cdot
\|_g \colon \mathcal B(\langle g \rangle) \to \N_0$ is a
homomorphism, and $\| S \|_g =0$ if and only if $S=1$. If $S \in \mathcal A (G_0)$, then $\|S\|_g \in [1, n-1]$, and if $\|S\|_g = 1$, then $S \in \mathcal A (G_0)$.
Arguing as above we obtain that
\[
\frac{\|A\|_g}{n-1} \le \min \mathsf L (A) \le \max \mathsf L (A) \le \|A\|_g \,.
\]

Now we recall the concept of almost arithmetical multiprogressions (AAMPs) as given in \cite[Chapter 4]{Ge-HK06a}. Then we gather  results on sets of lengths and on invariants controlling their structure such as the set of distances and the elasticities (Propositions \ref{2.2}, \ref{2.3}, and {2.4}). These results form the basis for the proof of Theorem \ref{1.1} given in the next section.

\begin{definition} \label{2.1}

Let $d \in \N$,  $l,\, M \in \N_0$, and  $\{0,d\} \subset
\mathcal D \subset [0,d]$. A subset $L \subset \Z$ is called an

\begin{itemize}
\item  {\it arithmetical multiprogression} \ ({\rm AMP} \ for short) \ with \ {\it   difference} \ $d$, \ {\it period} \ $\mathcal D$ \ and \ {\it length}
      \ $l$, \ if $L$ is an interval of \ $\min L + \mathcal D + d \Z$ \ (this means that $L$ is finite nonempty and $L = (\min L + \mathcal
D + d \mathbb Z) \cap [\min L, \max L]$),
      \ and \ $l$ \ is maximal such that \ $\min L + l d \in L$.

\item {\it almost arithmetical multiprogression} \ ({\rm AAMP} \ for
      short) \ with \ {\it difference} \ $d$, \ {\it period} \ $\mathcal D$,
      \ {\it length} \ $l$ and \ {\it bound} \ $M$, \ if
      \[
      L = y + (L' \cup L^* \cup L'') \, \subset \, y + \mathcal D + d \Z
      \]
      where \ $L^*$ \ is an \ {\rm AMP} \ with difference $d$ (whence $L^* \ne \emptyset$),
      period $\mathcal D$
      and length $l$ such that \ $\min L^* = 0$, \ $L' \subset [-M, -1]$, \ $L'' \subset \max L^* + [1,M]$ \ and \
      $y \in \Z$.

\item {\it almost arithmetical progression} \ ({\rm AAP} \ for short) \ with
      \ {\it difference} $d$, \ {\it bound} $M$ \ and \ {\it length} \
      $l$, \ if it is an \,{\rm AAMP}\, with difference $d$, period \ $\{0,d\}$, bound \ $M$
      \ and length \ $l$.
\end{itemize}
\end{definition}

\begin{proposition}[Structural results on $\mathcal L (G)$] \label{2.2}~

Let $G$ be a finite abelian group with $|G|\ge 3$.
\begin{enumerate}
\item There exists some $M \in \N_0$  such that every set of lengths  $L \in \mathcal L(G)$ is an {\rm AAMP} with some difference $d \in \Delta^* (G)$ and bound $M$.

\item For every $M \in \mathbb N_0$ and every finite nonempty set $\Delta^* \subset \mathbb N$, there is a finite abelian group $G^*$ such that the following holds{\rm \,:} for every {\rm AAMP} $L$ with difference $d\in \Delta^*$ and bound $M$ there is some $y_{L} \in \mathbb N$ such that
      \[
      y+L \in \mathcal L (G^*) \quad \text{ for all } \quad y \ge y_{L} \,.
      \]

\item Let $G_0 \subset G$ be a subset. Then there exist a bound $M \in \N_0$ and some $A^* \in \mathcal B (G_0)$ such that for all $A \in A^* \mathcal B (G_0)$ the set of lengths $\mathsf L (A)$ is an {\rm AAP} with difference $\min \Delta (G_0)$ and bound $M$.

\item If $A \in \mathcal B (G)$ such that $\supp (A) \cup \{0\}$ is a subgroup of $G$, then $\mathsf L (A)$ is an {\rm AP} with difference $1$.
\end{enumerate}
\end{proposition}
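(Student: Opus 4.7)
My plan is to prove the four parts in sequence, since they become progressively more specific. Parts~1 and 2 together form the classical Structure Theorem for Sets of Lengths of Krull monoids with finite class group, together with its sharpness.

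For Part~1 the strategy is as follows. Note that $\mathcal{B}(G)$ is reduced, finitely generated, and has bounded catenary degree, so the distance between any two factorizations of the same element is controlled by $\mathsf{D}(G)$. For $A \in \mathcal{B}(G)$ with $\min \mathsf{L}(A)$ sufficiently large, I would compare extremal factorizations and use a pigeonhole-style pattern argument to show that $\mathsf{L}(A)$ agrees, outside an initial and a terminal window of bounded size, with an AMP whose difference lies in $\Delta^*(G)$. A compactness argument based on the finiteness of $\mathcal{A}(G)$ then makes the bound $M$ uniform over all $A$. Part~2 is proved constructively: given an AAMP $L$, realize a shift $y+L$ as a set of lengths in a sufficiently large group $G^*$, typically built as a large direct sum of copies of cyclic groups. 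Increased rank provides enough atoms with prescribed linear relations to encode any finite pattern, and the finiteness of the input data (values of $d \in \Delta^*$ and the bound $M$) lets all required gadgets be packaged into a single $G^*$.

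Part~3 is a refinement of Part~1 to the sub-monoid $\mathcal{B}(G_0)$. My plan is to choose $A^* \in \mathcal{B}(G_0)$ to contain many copies of atoms that witness $\min \Delta(G_0)$; then for every $A \in A^* \mathcal{B}(G_0)$ the additional factorization flexibility supplied by $A^*$ collapses the period of the AAMP from Part~1 into $\{0, \min \Delta(G_0)\}$, yielding an AAP, with the bound $M$ inherited from Part~1. For Part~4, write $G_1 = \supp(A) \cup \{0\}$, a subgroup of $G$, and argue that $\mathsf{L}(A) \subset \Z$ is an interval. The plan is an atom-exchange argument: given factorizations of $A$ of lengths $k < \ell$, produce a factorization of length $k+1$ via a single length-increasing swap that stays within $\mathcal{B}(G_1)$. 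The subgroup hypothesis is essential: it ensures closure under all relevant swaps and, through the availability of inverses inside $G_1$ (and the length-one atom $0$ if $0 \in \supp(A)$), guarantees that such a swap is always available whenever $k < \ell$.

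The main obstacles are: for Parts~1 and 3, establishing uniformity of the constant $M$ across all $A \in \mathcal{B}(G)$, which is the delicate finiteness argument at the heart of the Structure Theorem and relies on the finiteness of $\mathcal{A}(G)$ and the boundedness of the catenary degree; and for Part~4, certifying that the length-increasing swap is always available without leaving $G_1$, which requires a careful analysis of minimal zero-sum sequences supported on a subgroup and an appeal to the structure of atom intersections in $\mathcal{B}(G_1)$.
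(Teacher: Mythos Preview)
The paper does not actually prove this proposition: it is a collection of known results, and the ``proof'' consists solely of citations---\cite[Theorem~4.4.11]{Ge-HK06a} for Part~1, \cite{Sc09a} for Part~2, and \cite[Theorems~4.3.6 and~7.6.8]{Ge-HK06a} for Parts~3 and~4. Your proposal, by contrast, attempts to sketch genuine proof strategies. For Parts~2, 3, and~4 your outlines are in the right spirit and broadly match what happens in the cited sources (large direct sums for realization; a fixed $A^*$ providing enough copies of distance-witnessing relations; an exchange argument exploiting that every nonzero element of the subgroup occurs in $A$).

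For Part~1, however, your plan has a genuine gap: it substantially underestimates what is required. The Structure Theorem occupies most of Chapter~4 of \cite{Ge-HK06a} and passes through transfer homomorphisms, pattern ideals, and a delicate analysis linking local half-factorial subsets to the differences in $\Delta^*(G)$. Your phrases ``pigeonhole-style pattern argument'' and ``compactness argument based on the finiteness of $\mathcal{A}(G)$'' do not touch the real obstacles. In particular, you give no mechanism for why the difference $d$ of the central AMP must lie in $\Delta^*(G)$ rather than merely in $\Delta(G)$---this is the heart of the theorem and is where $\Delta^*(G)$ enters, via minimal distances of subsets $G_0 \subset G$ that control the local structure of factorizations. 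Nor do you explain how the bounded initial and terminal parts $L'$, $L''$ are separated from the periodic middle; bounded catenary degree alone yields only that $\Delta(\mathsf L(A))$ is bounded, not that $\mathsf L(A)$ is an AAMP. If you intend to reprove Part~1 rather than cite it, a realistic plan must identify the role of pattern ideals (or an equivalent device), explain how local periodicity with difference in $\Delta^*(G)$ is established, and indicate how these local pieces are glued into a single global AAMP with uniform bound.
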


\begin{proof}
The first statement gives the Structure Theorem for Sets of Lengths (\cite[Theorem 4.4.11]{Ge-HK06a}), which is sharp by the second statement proved in \cite{Sc09a}. The third and the fourth statements show that sets of lengths are extremely smooth provided that the associated zero-sum sequence contains all elements of its support sufficiently often (\cite[Theorems 4.3.6 and 7.6.8]{Ge-HK06a}).
\end{proof}

\begin{proposition}[Structural results on $\Delta (G)$ and on $\Delta^* (G)$] \label{2.3}~

Let $G = C_{n_1} \oplus \ldots \oplus C_{n_r}$ where $r, n_1, \ldots, n_r \in \N$ with $r=\mathsf r (G)$, $1 < n_1 \t \ldots \t n_r$, and $|G|\ge 3$.
\begin{enumerate}
\item $\Delta (G)$ is an interval with
      \[
      \bigl[1,\, \max \{\exp (G) -2, \, k-1 \} \bigr] \subset \Delta (G) \subset \bigl[1, \mathsf D (G) - 2\bigr] \ \text{ where } \  k =\sum_{i=1}^{\mathsf r (G)} \Bigl\lfloor \frac{n_i}{2} \Bigr\rfloor  \,.
      \]

\item $1 \in \Delta^* (G) \subset \Delta (G)$,  $[1, \mathsf r (G)-1] \subset \Delta^* (G)$, and $\max \Delta^* (G) = \max \{\exp (G)-2, \mathsf r (G)-1 \}$.

\item If $G$ is cyclic of order $|G| = n \ge 4$, then $\max \big( \Delta^* (G) \setminus \{n-2\}\big) = \lfloor \frac{n}{2} \rfloor - 1$.
\end{enumerate}
\end{proposition}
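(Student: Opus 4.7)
My plan is to treat the three parts together: the lower bounds on $\Delta(G)$ and $\Delta^*(G)$ come from explicit constructions of zero-sum sequences, while the upper bounds follow from the size-length inequality $|A|/\mathsf D(G)\le\min\mathsf L(A)\le\max\mathsf L(A)\le|A|/2$ recorded in the excerpt combined with refinement arguments on factorizations; the interval property in part 1 will be treated separately by a chaining argument.

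For part 1, the upper bound $\Delta(G)\subset[1,\mathsf D(G)-2]$ follows from the size-length inequality applied to pairs of atoms drawn from factorizations witnessing two consecutive lengths in $\mathsf L(A)$: the product of two atoms has a set of lengths contained in $[2,\mathsf D(G)]$, and this forces successive lengths to differ by at most $\mathsf D(G)-2$. The distance $\exp(G)-2$ is realized by $A=g^n\cdot(-g)^n$ where $g\in G$ has order $n=\exp(G)$: the only atoms dividing $A$ are $g\cdot(-g)$, $g^n$ and $(-g)^n$, which forces $\mathsf L(A)=\{2,n\}$. For $k-1$ with $k=\sum_i\lfloor n_i/2\rfloor$, I would work in an independent basis $(e_1,\dots,e_r)$ of orders $n_1\mid\dots\mid n_r$ and construct $A$ whose only two factorization lengths arise from pairing short atoms $e_i\cdot(-e_i)$ coordinatewise on the one hand and from assembling them into longer atoms across the basis on the other, with the two lengths differing by exactly $k-1$. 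The interval property of $\Delta(G)$ is the hardest step; I would prove it by a chaining argument showing that whenever $d\in\Delta(L)$ is witnessed by successive lengths in some $L\in\mathcal L(G)$, one can multiply the underlying zero-sum sequence by a carefully chosen auxiliary so that $d-1$ also appears as a successive distance in the resulting set of lengths.

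For part 2, $1\in\Delta^*(G)\subset\Delta(G)$ is formal, and $[1,\mathsf r(G)-1]\subset\Delta^*(G)$ follows by choosing, for each $s\in[2,\mathsf r(G)]$, the subset $G_0=\{e_1,\dots,e_s,-(e_1+\dots+e_s)\}$ of independent basis elements: a direct enumeration of atoms yields $\min\Delta(G_0)=s-1$. The equality $\max\Delta^*(G)=\max\{\exp(G)-2,\mathsf r(G)-1\}$ combines these lower bounds with the matching upper bound, whose proof requires showing that no $G_0\subset G$ produces a strictly larger $\min\Delta(G_0)$; here I expect the $g$-norm tool recorded in the excerpt to be useful for locating atoms of bounded length-difference in $\mathcal B(G_0)$. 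For part 3, the value $\lfloor n/2\rfloor-1$ in $C_n$ should be realizable via a well-chosen subset involving a generator and an element of composite order, and the opposite inequality---ruling out subsets with $\min\Delta(G_0)$ strictly between $\lfloor n/2\rfloor-1$ and $n-2$---is the main obstacle of part 3 and should proceed by a case analysis on the orders of elements of $G_0\subset C_n$ together with an analysis of the factorization structure of $\mathcal B(G_0)$.
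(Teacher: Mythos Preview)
Your constructions for the lower bounds are fine: $g^n(-g)^n$ does realize the distance $\exp(G)-2$, and the subset $\{e_1,\dots,e_s,-(e_1+\dots+e_s)\}$ does give $\min\Delta(G_0)=s-1$. The upper bound $\Delta(G)\subset[1,\mathsf D(G)-2]$ is also routine in the way you indicate.

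However, you should be aware that the paper does not prove this proposition at all: it is a compilation of results quoted from the literature, with a one-line proof consisting entirely of citations (\cite[Section~6.8]{Ge-HK06a} for most of it, \cite{Ge-Zh16a} for $\max\Delta^*(G)$, and \cite{Pl-Sc18a} for part~3). So there is no ``paper's own proof'' to compare against beyond those references.

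More importantly, your sketches for the three hard assertions are not proof plans but placeholders. The interval property of $\Delta(G)$ is a genuine theorem whose proof does not proceed by the na\"ive chaining you describe (multiplying by an auxiliary to drop the distance by one); the actual argument goes through a careful analysis of so-called minimal relations and their distances and is several pages long. The upper bound $\max\Delta^*(G)\le\max\{\exp(G)-2,\mathsf r(G)-1\}$ is the main result of \cite{Ge-Zh16a} and requires a delicate structural analysis of half-factorial and minimal non-half-factorial subsets of $G$; the $g$-norm by itself will not get you there, and no short argument is known. Likewise, determining the second-largest element of $\Delta^*(C_n)$ in part~3 is not a matter of a simple case analysis on orders of elements: ruling out values in the gap $(\lfloor n/2\rfloor-1,\,n-2)$ needs the classification of large-minimum-distance subsets of cyclic groups developed in \cite{Pl-Sc18a} and related work. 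If you intend to supply self-contained proofs rather than citations, each of these three items is a substantial project in its own right, and your current outline does not identify the key obstacles or the tools needed to overcome them.
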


\begin{proof}
The statement on $\max \Delta^* (G)$ follows from \cite{Ge-Zh16a}.
For all remaining statements see  \cite[Section 6.8]{Ge-HK06a}. A more detailed analysis of $\Delta^* (G)$ in case of cyclic groups can be found in \cite{Pl-Sc18a}.
\end{proof}

\begin{proposition}[Results on $\rho_k (G)$ and on $\rho (G)$] \label{2.4}~

Let $G$ be a finite abelian group with $|G| \ge 3$, and let $k \in \N$.
\begin{enumerate}
\item $\rho (G) = \mathsf D (G)/2$ and $\rho_{2k} (G) = k \mathsf D (G)$.

\item $1 + k \mathsf D (G) \le \rho_{2k+1} (G) \le k \mathsf D (G) + \mathsf D (G)/2$. If $G$ is cyclic, then equality holds on the left side.
\end{enumerate}
\end{proposition}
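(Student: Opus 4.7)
The plan is to prove both parts by a uniform scheme: upper bounds come from the length inequality for factorizations of zero-sum sequences, matched by lower bounds from an explicit construction using an atom $U$ of length $\mathsf D (G)$ together with $-U$. For any $A \in \mathcal B (G)$, write $A = 0^{m_0} A'$ with $0 \notin \supp (A')$; since $0$ is itself an atom, $\mathsf L (A) = m_0 + \mathsf L (A')$. If $A' = U_1 \cdots U_j = V_1 \cdots V_l$ are two factorizations into atoms with $j \le l$, every atom dividing $A'$ has length in $[2, \mathsf D (G)]$, so $2 l \le |A'| \le j \mathsf D (G)$ and hence $l \le j \mathsf D (G)/2$.

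For part (1), specialize to $2k \in \mathsf L (A)$ and $j = 2k - m_0$: one gets $\max \mathsf L (A) \le m_0 + (2k - m_0) \mathsf D (G)/2$, which, since $\mathsf D (G) \ge 2$, is maximal at $m_0 = 0$ and equals $k \mathsf D (G)$. Hence $\rho_{2k} (G) \le k \mathsf D (G)$. For the matching lower bound, take an atom $U = g_1 \cdots g_{\mathsf D (G)}$ of maximal length; minimality forces $0 \notin \supp (U)$, so each $g_i \cdot (-g_i)$ is itself an atom of length $2$ (read as $g_i^2$ when $2 g_i = 0$). Thus $U \cdot (-U)$ has the factorizations $\{U, -U\}$ and $\prod_i (g_i \cdot (-g_i))$, of lengths $2$ and $\mathsf D (G)$; raising to the $k$-th power gives $\{2k,\, k \mathsf D (G)\} \subset \mathsf L \bigl( (U \cdot (-U))^k \bigr)$, so $\rho_{2k} (G) \ge k \mathsf D (G)$. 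Combining, $\rho_{2k} (G) = k \mathsf D (G)$, and then $\rho (G) = \lim_k \rho_{2k} (G)/(2k) = \mathsf D (G)/2$ by the limit formula for the elasticity recalled in Section~\ref{2}.

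For part (2), the same analysis with $j = 2k+1-m_0$ yields $\max \mathsf L (A) \le m_0 + (2k+1-m_0) \mathsf D (G)/2 \le k \mathsf D (G) + \mathsf D (G)/2$ (the right-hand side is again maximal at $m_0 = 0$), while $A = 0 \cdot (U \cdot (-U))^k$ realizes $\mathsf L (A) \supset \{2k+1,\, 1 + k \mathsf D (G)\}$, establishing both inequalities. For the cyclic case $G = C_n$, the upper bound has to be sharpened to $1 + kn$. When $m_0 \ge 1$, the bound $m_0 + (2k+1-m_0) n/2$, combined (whenever $2k+1-m_0$ is odd) with induction on $k$ applied to the already-treated smaller cases, is maximal at $m_0 = 1$ and equals exactly $1 + kn$ because $n \ge 3$. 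The delicate case, and the main obstacle, is $m_0 = 0$: the crude length bound only gives $kn + n/2$. To close this gap I would invoke the $g$-norm from Section~\ref{2}, which is additive on factorizations and satisfies $\max \mathsf L (A) \le \|A\|_g$ with $\|V\|_g \in [1, n-1]$ for every atom $V$ having $0 \notin \supp (V)$, together with the structural facts that atoms of length $n$ are precisely the sequences $h^n$ for generators $h$ of $C_n$, atoms of length $2$ are precisely the pairs $h \cdot (-h)$, and the involution $h \mapsto -h$ on the set of generators is fixed-point free for $n \ge 3$. The resulting parity-and-saturation argument, which is carried out in detail in Section~6.3 of Geroldinger--Halter-Koch~\cite{Ge-HK06a}, rules out all factorization lengths strictly between $1 + kn$ and $(2k+1) n/2$ and yields the claimed equality in the cyclic case.
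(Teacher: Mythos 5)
The paper offers no proof of Proposition \ref{2.4} beyond citations to \cite[Chapter 6.3]{Ge-HK06a}, \cite{Ge09a}, and \cite{Ge-Gr-Yu15}, so there is no in-paper argument to compare against line by line. Measured against what those sources contain, your treatment of part 1 and of the two displayed inequalities in part 2 is correct and is the standard argument: split off $0^{m_0}$, use $2\max \mathsf L (A') \le |A'| \le j\, \mathsf D (G)$ for the $0$-free part, and match with the examples $\big(U(-U)\big)^k$ and $0\cdot\big(U(-U)\big)^k$. (The appeal to induction in your $m_0 \ge 1$ analysis is superfluous: $f(m_0)=m_0+(2k+1-m_0)n/2$ is decreasing in $m_0$ and $f(1)=1+kn$ already.)

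The genuine gap is the cyclic equality $\rho_{2k+1}(C_n)=1+kn$, which is exactly the one deep assertion in the proposition, and your plan for it does not work as described. The $g$-norm inequalities recalled in Section \ref{2} give only $\max \mathsf L (A) \le \|A\|_g = \sum_{i}\|U_i\|_g \le (2k+1)(n-1)$, which for $n\ge 3$ is strictly weaker even than the crude bound $kn+n/2$ you are trying to beat; the norm depends on the choice of generator $g$, and no single choice controls all $2k+1$ atoms at once, so the "parity-and-saturation" refinement you gesture at is not a proof and there is no indication it can be completed along these lines. The attribution is also off: \cite[Section 6.3]{Ge-HK06a} contains only the general bounds $1+k\mathsf D (G)\le \rho_{2k+1}(G)\le k\mathsf D (G)+\mathsf D (G)/2$ (whether equality holds on the left was left open there); the cyclic equality is the content of the much later paper \cite{Ge-Gr-Yu15}, whose proof is a substantial piece of additive combinatorics resting on the structure of long minimal zero-sum sequences over $C_n$, not a short norm computation. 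As written, your proposal establishes part 1 and the inequalities of part 2, but leaves the final sentence of part 2 unproved.
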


\begin{proof}
See \cite[Chapter 6.3]{Ge-HK06a}, \cite[Theorem 5.3.1]{Ge09a}, and \cite{Ge-Gr-Yu15}.
\end{proof}

We need the  following technical lemma which follows from \cite[Lemma 6.1]{Fr-Sc10}.

\begin{lemma} \label{wichtig-0}
Let $G$ be a finite abelian group, $A \in \mathcal B (G)$, $x = U_1 \cdot \ldots \cdot U_r \in \mathsf Z (G)$, with $U_1, \ldots, U_r \in \mathcal A (G)$ and $|U_1|= \ldots = |U_r|=2$, such that $U_1 \cdot \ldots \cdot U_r \mid A $ in $\mathcal B (G)$. Then there exists a factorization $z \in \mathsf Z (A)$ with $|z|= \max \mathsf L (A)$ and $x \mid z$ in $\mathsf Z (G)$.
\end{lemma}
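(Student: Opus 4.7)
The approach I propose is an extremal/swapping argument: among the maximum-length factorizations of $A$, I would pick one that contains as many copies of the prescribed length-$2$ atoms as possible, and show that if the target multiplicities are not yet reached then a local two-for-two swap produces a strictly better factorization, contradicting the choice. To set up, write $x = W_1^{m_1}\cdots W_s^{m_s}$, where $W_1,\ldots,W_s$ are the pairwise distinct length-$2$ atoms occurring in $x$, each of the form $W_j = g_j(-g_j)$ with $g_j \neq 0$. Set $k = \max \mathsf L(A)$, and among all $z \in \mathsf Z(A)$ with $|z|=k$ choose one maximizing
\[
\Phi(z) \;=\; \sum_{j=1}^s \min\bigl\{\mathsf v_{W_j}(z),\, m_j\bigr\}.
\]
If $\Phi(z) = m_1+\cdots+m_s$ then $x \mid z$ in $\mathsf Z(G)$ and the conclusion is immediate, so assume for contradiction that there is an index $i$ with $\mathsf v_{W_i}(z) < m_i$, and abbreviate $W = W_i = g(-g)$.

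The next step is to locate two atoms in $z$ to swap. Since $\mathsf v_g(A)$ exceeds the number of $g$'s contained in the $W$-atoms of $z$, there has to be an atom $V$ occurring in $z$ with $V \neq W$ and $g \in \supp(V)$; symmetrically there is an atom $V'$ in $z$ with $V' \neq W$ and $-g \in \supp(V')$. Two elementary observations do the remaining structural work: any atom containing both $g$ and $-g$ must, by minimality of its zero sum, equal $g(-g) = W$, so $V \neq V'$; and any length-$2$ atom containing $g$ equals $g(-g) = W$, so $|V|, |V'| \ge 3$, and in particular $V, V' \notin \{W_1,\ldots,W_s\}$. Then $B := VV'/W$ is a zero-sum sequence of length $|V|+|V'|-2 \ge 4$, and for any $y \in \mathsf Z(B)$ the candidate factorization $\tilde z := (z/(VV'))\cdot W \cdot y$ of $A$ has length $|z| + (|y| - 1)$. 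Maximality of $|z|$ forces $|y| = 1$, so $B = T$ is itself an atom and $\tilde z = (z/(VV'))\cdot W\cdot T \in \mathsf Z(A)$ again has length $k$.

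The conclusion is then a bookkeeping step: since neither $V$ nor $V'$ lies in $\{W_1,\ldots,W_s\}$, passing from $z$ to $\tilde z$ does not decrease any $\mathsf v_{W_j}(\cdot)$; it strictly increases $\mathsf v_{W_i}$ by one via the inserted $W$; and $T$, should it happen to coincide with some $W_j$, can only increase matters further. This gives $\Phi(\tilde z) \ge \Phi(z) + 1$, contradicting the maximality of $\Phi(z)$. The main obstacle, and the only spot requiring care, is arranging that the swap simultaneously preserves maximum length and does not destroy any previously gathered $W_j$; both are controlled by the single structural fact that any atom containing $g$ as well as $-g$, or any length-$2$ atom containing $g$, must coincide with $W$.
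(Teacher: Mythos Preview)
Your extremal/swapping argument is sound and yields a self-contained proof; the paper itself does not argue this lemma but simply cites \cite[Lemma~6.1]{Fr-Sc10}, so there is no line-by-line comparison to make. Your approach --- pick a maximum-length factorization that is $\Phi$-optimal and perform a local two-for-two replacement --- is exactly the natural one, and the two structural facts you isolate (a minimal zero-sum sequence containing $g$ and $-g$ must equal $g(-g)$; a length-two atom containing $g$ must equal $g(-g)$) are precisely what is needed.

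One small technical point deserves attention: your justification that $V$ and $V'$ are distinct occurrences in $z$ rests on the observation that any atom containing both $g$ and $-g$ equals $W$, which implicitly assumes $g \neq -g$. If $\ord(g) = 2$, so that $W = g^2$, the ``symmetric'' choice of $V'$ could a priori coincide with the occurrence $V$, and then $VV' \nmid z$. The fix is immediate: from $W^{m_i} = g^{2m_i} \mid A$ one has $\mathsf v_g(A) \ge 2m_i$, while the $W$-atoms in $z$ account for only $2\,\mathsf v_W(z) \le 2(m_i-1)$ copies of $g$; hence at least two copies of $g$ lie in non-$W$ atoms of $z$, and since any non-$W$ atom contains $g$ with multiplicity at most one (otherwise $g^2$ would be a proper zero-sum subsequence), there are at least two distinct such occurrences to serve as $V$ and $V'$. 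With this minor adjustment your argument goes through verbatim.
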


\section{A characterization of extremal cases} \label{3}

The goal of this section is to prove Theorem \ref{1.1}. First we  recall some cases where the systems of sets of lengths are completely determined.

\begin{proposition} \label{3.1}~
\begin{enumerate}
\item $\mathcal L (C_1) = \mathcal L (C_2) = \big\{ \{m\} \mid m \in \N_0 \big\}$.

\item $\mathcal L (C_3) = \mathcal L (C_2 \oplus C_2) = \bigl\{ y
      + 2k + [0, k] \, \bigm| \, y,\, k \in \N_0 \bigr\}$.

\item $\mathcal L (C_4) = \bigl\{ y + k+1 + [0,k] \, \bigm|\, y,
      \,k \in \N_0 \bigr\} \,\cup\,  \bigl\{ y + 2k + 2 \cdot [0,k] \, \bigm|
      \, y,\, k \in \N_0 \bigr\} $.

\item $\mathcal L (C_2^3)  =  \bigl\{ y + (k+1) + [0,k] \,
      \bigm|\, y
      \in \N_0, \ k \in [0,2] \bigr\}$ \newline
      $\quad \text{\, } \ \qquad$ \quad $\cup \ \bigl\{ y + k + [0,k] \, \bigm|\, y \in \N_0, \ k \ge 3 \bigr\}
      \cup \bigl\{ y + 2k
      + 2 \cdot [0,k] \, \bigm|\, y ,\, k \in \N_0 \bigr\}$.

\item  $\mathcal L (C_3^2) = \{ [2k, l] \mid k \in \mathbb N_0, l \in [2k, 5k]\}$ \newline
 $\quad \text{\, } \ \qquad$ \quad $\cup \ \{ [2k+1, l] \mid k \in \N, l \in [2k+1, 5k+2] \} \cup \{ \{ 1\}  \}$.
\end{enumerate}
\end{proposition}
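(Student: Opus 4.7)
The plan is to prove the five parts in sequence, using a common strategy: enumerate the atoms of $\mathcal B(G)$, parametrize factorizations $z \in \mathsf Z(A)$ by the multiplicities of atoms of each type, and read off $\mathsf L(A)$ from a small system of linear constraints. Throughout I will use the explicit atom lists together with the basic inequalities
\[
\frac{|A|}{\mathsf D(G)} \,\le\, \min \mathsf L(A) \,\le\, \max \mathsf L(A) \,\le\, \frac{|A|}{2}
\]
for nontrivial $A$, the elasticity identity $\rho(G) = \mathsf D(G)/2$ from Proposition \ref{2.4}.1, and the structural information about $\Delta^*(G)$ from Proposition \ref{2.3}.

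Parts (1), (2), and (3) reduce to a direct atom count. For $C_1$ and $C_2$ the atoms ($0$ for $C_1$; $0$ and $g^2$ for $C_2$) are multiplicatively independent in the ambient free monoid, so every element has a unique factorization and $\mathsf L(A)$ is a singleton. For $C_3$ and $C_2 \oplus C_2$, a general zero-sum sequence $0^a \cdot U$ admits factorizations parametrized by the number of length-$2$ mixed atoms used, which ranges over an arithmetic progression of step $3$; a short computation then produces the interval $y + 2k + [0,k]$. For $C_4$ the analogous count has to distinguish factorizations that use only length-$2$ and length-$4$ atoms (giving the progression of difference $2$) from those forced to employ a length-$3$ atom such as $g^2(2g)$ (giving an interval of difference $1$), which explains the two families in $\mathcal L(C_4)$.

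For part (4) on $C_2^3$ the atom list is larger --- $0$, the seven squares $e^2$, the seven triples $e_1 e_2 (e_1+e_2)$ coming from $2$-dimensional subgroups, and the length-$4$ minimal zero-sum sequences with four pairwise distinct elements --- but the strategy is unchanged. Since $\rho(C_2^3)=2$ and $\Delta^*(C_2^3)=\{1\}$, the Structure Theorem already forces every set of lengths to be an interval or an AP of difference $2$ with $\max L \le 2 \min L$. Using Lemma \ref{wichtig-0} one can extract length-$2$ atoms dividing $A$ to saturation in a longest factorization; what remains is either supported in a proper subgroup (where Proposition \ref{2.2}.4 yields an interval) or consists purely of squares (yielding the difference-$2$ family). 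The three displayed cases correspond to the small-$k$, generic $k \ge 3$, and pure length-$2$ regimes.

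Part (5) on $C_3^2$ is the most delicate, and I expect it to be the main obstacle. Proposition \ref{2.3}.2 gives $\Delta^*(C_3^2)=\{1\}$, so the Structure Theorem forces every set of lengths to be an AAP of difference $1$ with some universal bound $M$; the crux is to show that $M$ may be taken to be $0$, i.e.\ each $\mathsf L(A)$ is an actual interval. I would argue by exhibiting, for any $A$ with $\min \mathsf L(A) < \max \mathsf L(A)$, explicit local rearrangements that shift the factorization length by exactly one --- typically by rebalancing two atoms of total length five into atoms of lengths $(2,3)$, a trade which is always available in $C_3^2$. The extremal values come from Proposition \ref{2.4}: the even case $\rho_{2k}(C_3^2)=5k$ is immediate from $\rho(G)=5/2$, whereas the odd bound $\rho_{2k+1}(C_3^2)=5k+2$ needs an ad-hoc realization (since Proposition \ref{2.4}.2 only isolates the range $\{5k+1, 5k+2\}$ in the non-cyclic case); a concrete witness such as a product of length-$5$ atoms together with a single length-$4$ atom can be analyzed directly. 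Finally, the singleton $\{1\}$ corresponds to $A$ being a single atom. The principal technical obstacle will be ruling out gaps at intermediate lengths for sequences of small size, which requires a finite case analysis using the explicit atom list of $\mathcal B(C_3^2)$.
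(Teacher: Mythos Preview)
The paper does not actually prove this proposition: it cites \cite[Theorem 7.3.2]{Ge-HK06a} for parts 2--4 and \cite[Proposition 3.12]{Ge-Sc16b} for part 5. So your proposal is not comparable to the paper's ``proof'' --- you are attempting something the authors deliberately outsource. That is fine, but it means you should expect substantial work, and your sketch underestimates this in two places.

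For part (4) on $C_2^3$ your outline contains a factual error and a flawed reduction. First, $\Delta^*(C_2^3) = \{1,2\}$, not $\{1\}$: by Proposition \ref{2.3}.2 one has $[1, \mathsf r(G)-1] = \{1,2\} \subset \Delta^*(C_2^3)$ and $\max \Delta^*(C_2^3) = \max\{\exp(G)-2, \mathsf r(G)-1\} = 2$. This is consistent with the statement (the family $y + 2k + 2\cdot[0,k]$ has difference $2$), but it means the Structure Theorem gives you AAMPs with difference in $\{1,2\}$ and some bound $M$, not APs outright; you still have to force $M=0$. Second, and more seriously, your dichotomy ``after extracting length-$2$ atoms, the residue is either supported in a proper subgroup or consists purely of squares'' is false. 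Take $A = e_1 e_2 e_3 (e_1+e_2+e_3)$: no square $g^2$ divides $A$, yet $\supp(A)$ generates all of $C_2^3$, so Proposition \ref{2.2}.4 does not apply. The actual argument in \cite{Ge-HK06a} is a genuine case analysis on the support structure, not a one-line reduction.

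For part (5) on $C_3^2$ your diagnosis is correct --- $\Delta^*(C_3^2) = \{1\}$, and the crux is upgrading AAPs to honest intervals --- but your proposed mechanism (``rebalancing two atoms of total length five into atoms of lengths $(2,3)$'') is not always available: an element like $A = U^2$ with $U = e_1^2 e_2^2 (e_1+e_2)$ has $\mathsf L(A) = \{2,5\}$, so no length-$3$ factorization exists and no such rebalancing is possible from the length-$2$ side. You will need to argue instead that when $\min \mathsf L(A) < \ell < \max \mathsf L(A)$ a factorization of length $\ell$ exists, and this requires tracking which atoms of lengths $3$, $4$, $5$ can be combined; the proof in \cite{Ge-Sc16b} does exactly this and is several pages long. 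Your plan for parts (1)--(3) is sound and would go through essentially as written.
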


\begin{proof}
1. This is  well-known. A proof of 2.,3., and 4. can be found in \cite[Theorem 7.3.2]{Ge-HK06a}.
For 5. we refer to \cite[Proposition 3.12]{Ge-Sc16b}.
\end{proof}

Let $G$ be a finite abelian group, $g \in G$ with $\ord (g) = n \ge 2$, $r \in [0, n-1]$, $k, \ell \in \N_0$, and $A = g^{kn+r}(-g)^{\ell n + r}$. We often use that
\begin{equation} \label{basic}
\mathsf Z (A) = \{ U^{r+n \nu} V^{k - \nu} (-V)^{\ell - \nu} \mid \nu \in [0, \min \{k, \ell \}] \} \,,
\end{equation}
where $U = g(-g)$, $V = g^n$, and that
\[
\begin{aligned}
\mathsf L (A) & = \{r+k+\ell + \nu (n-2) \mid \nu \in [0, \min \{k, \ell \}] \} \\
 & = r+k+\ell + (n-2) \cdot [0, \min \{k, \ell \}] \,.
\end{aligned}
\]

\begin{proposition} \label{3.2}
Let $G$ be a cyclic group of order $|G| = n \ge 7$,   $g \in G$ with $\ord (g) = n$, $k \in \N$, and
\[
A_k = \begin{cases} g^{nk} (-g)^{nk}(2g)^{n} & \quad \text{if} \ n \ \text{is even}, \\
                     g^{nk} (-g)^{nk} \big( (2g)^{(n-1)/2} g \big)^2 & \quad \text{if} \ n \ \text{is odd}.
      \end{cases}
\]
Then there is a bound $M \in \N$ such that, for all  $k \ge n-1$, the sets $\mathsf L (A_k)$ are {\rm AAP}s with difference $1$ and bound $M$, but they are not {\rm AP}s with difference $1$.
\end{proposition}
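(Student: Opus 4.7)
The plan is to pin down both extremes of $\mathsf L(A_k)$ directly, deduce the AAP structure from Proposition \ref{2.2}.3, and finally rule out a single value, namely $2k+3$, from $\mathsf L(A_k)$, which yields the ``not AP'' part.

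For the extremes, the $g$-norm satisfies $\|A_k\|_g = nk+2$ in both parities, so $\max \mathsf L(A_k) \le nk+2$; equality is realised by $U^{nk} \cdot W^2$ with $U = g(-g)$ and $W = (2g)^{n/2}$ when $n$ is even, and by $U^{nk} \cdot (g(2g)^{(n-1)/2})^2$ when $n$ is odd. On the other side, the length bound $|A_k|/\mathsf D(C_n)$ gives $\min \mathsf L(A_k) \ge 2k+2$, and this is realised by $V_+^k V_-^k$ times the corresponding two-atom completion of the remainder (where $V_\pm := (\pm g)^n$).

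Next, I would apply Proposition \ref{2.2}.3 with $G_0 = \supp(A_k) = \{g,-g,2g\}$. A direct enumeration of factorisations of $g^n(-g)^n(2g)^n$ produces two consecutive integers in its set of lengths, so $\min \Delta(G_0) = 1$. The proposition then yields a uniform bound $M$ and some $A^* \in \mathcal B(G_0)$ such that $\mathsf L(A)$ is an AAP with difference $1$ and bound $M$ whenever $A^* \mid A$ in $\mathcal B(G_0)$; the threshold $k \ge n-1$ is exactly what ensures $A^* \mid A_k$.

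The main work, and the main obstacle, is showing $2k+3 \notin \mathsf L(A_k)$. A short case-analysis of minimal zero-sum sequences over $G_0$ shows that for $n \ge 7$ every atom of $\mathcal B(G_0)$ has $\|\cdot\|_g \in \{1, 2, n-1\}$: the $\|\cdot\|_g = 1$ atoms are $U$, $V_+$, $W$ (only for $n$ even), and $T_c := g^{n-2c}(2g)^c$ for $c \in [1, \lfloor(n-1)/2\rfloor]$; the $\|\cdot\|_g = 2$ atoms are $S := (2g)(-g)^2$ and, for $n$ odd additionally $(2g)^n$ and $(-g)(2g)^{(n+1)/2}$; and $V_-$ is the unique atom of $g$-norm $n-1$. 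Suppose for contradiction that $A_k$ has a factorisation into $2k+3$ atoms with $a$, $b$, $c$ atoms of $g$-norm $1$, $2$, $n-1$ respectively. Then
\[
a + b + c = 2k+3 \qquad \text{and} \qquad a + 2b + (n-1)c = nk+2,
\]
so $b = (n-2)(k-c) - 1$, forcing $k-c \ge 1$. The $(2g)$-count equation rules out any copy of $(2g)^n$ and allows at most one copy of $(-g)(2g)^{(n+1)/2}$; writing $\epsilon \in \{0, 1\}$ for the number of the latter, the $(-g)$-count equation yields
\[
a_U \;=\; 2 - (n-4)(k-c) + \epsilon
\]
for the number of $U$-atoms. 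For $n \ge 7$ and $k-c \ge 1$ one has $(n-4)(k-c) \ge 3$, so $a_U \le \epsilon - 1 \le 0$; the only way to reach $a_U = 0$ is $n = 7$, $\epsilon = 1$, $k-c = 1$, and then the $(2g)$-count equation reads $\sum_c c\, a_{T_c} + 3 + 4 = 6$, which has no nonnegative solution and is infeasible. Hence $2k+3 \notin \mathsf L(A_k)$, so $\mathsf L(A_k)$ contains the isolated value $2k+2$ but skips $2k+3$ and is not an AP with difference $1$.
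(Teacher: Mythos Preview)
Your argument for $2k+3 \notin \mathsf L(A_k)$ is correct and is in fact a cleaner, more systematic version of the paper's own case analysis: the paper also splits according to the $g$-norms $\{1,2,n-1\}$ of the atoms dividing $A_k$ and bounds the number of norm-$2$ atoms via the multiplicity of $-g$, arriving at the same contradiction. (One small slip: for $n$ even, $|A_k|=n(2k+1)$, so $|A_k|/\mathsf D(C_n)=2k+1$, not $2k+2$; the correct lower bound for $\min\mathsf L(A_k)$ comes from observing that at most $k$ copies of $(-g)^n$ can occur, whence any factorization has length $\ge nk+2-k(n-2)=2k+2$.)

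The AAP part, however, has a genuine gap. Proposition~\ref{2.2}.3 only asserts the \emph{existence} of some $A^*\in\mathcal B(G_0)$, and you have no control over $\mathsf v_{2g}(A^*)$. Since $\mathsf v_{2g}(A_k)\in\{n-1,n\}$ is fixed independently of $k$, there is no reason why $A^*\mid A_k$ should ever hold, let alone hold precisely for $k\ge n-1$; your sentence ``the threshold $k\ge n-1$ is exactly what ensures $A^*\mid A_k$'' is unjustified. The paper circumvents this by using Proposition~\ref{2.2}.1 instead: every $\mathsf L(A_k)$ is an AAMP with some difference $d_k\in\Delta^*(G)\subset[1,n-2]$ and a uniform bound $M'$. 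Then, from the relation
\[
\big(g^n\big)\big((-g)^2(2g)\big)=\big(g^{n-2}(2g)\big)\big(g(-g)\big)^2,
\]
one sees that $\mathsf L\big((g^n\cdot(-g)^2(2g))^{n-1}\big)\supset[2(n-1),3(n-1)]$; for $k\ge n-1$ this product divides $A_k$ (here the threshold is used concretely), so $\mathsf L(A_k)$ contains an arithmetic progression of difference~$1$ and length $n-1$, which forces the AAMP to be an AAP with difference~$1$ and a uniform bound. Your argument needs this ingredient (or an equivalent one) to close the gap.
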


\begin{proof}
We set $G_0 = \{g, -g, 2g\}$, $U_1 = (-g)g$, $U_2= (-g)^2 (2g)$ and, if $n$ is odd, then $V_1 = (2g)^{(n+1)/2}(-g)$. Furthermore, for $j \in [0, n/2]$, we define $W_j = (2g)^j g^{n-2j}$. Then, together with $-W_0 = (-g)^n$, these are all minimal zero-sum sequences which divide $A_k$ for $k \in \N$. Note that
\[
\|-W_0\|_g = n-1, \ \|U_2\|_g = \|V_1\|_g = 2,  \  \text{and} \  \|U_1\|_g = \|W_j\|_g =1 \quad  \text{for all} \ j \in [0, n/2] \,.
\]

It is sufficient to prove the following two assertions.
\begin{enumerate}
\item[{\bf A1.}\,]  There is an $M \in \N_0$ such that $\mathsf L (A_k)$ is an AAP with difference $1$ and bound $M$ for all $k \ge n-1$.

\item[{\bf A2.}\,]  For each $k \in \N$,  $\mathsf L (A_k)$ is not an AP with difference $1$.
\end{enumerate}

\smallskip

{\it Proof of \,{\bf A1}}.\, By Proposition \ref{2.2}.1 there is a bound $M' \in \mathbb N_0$ such that, for each $k \in \N$, $\mathsf L (A_k)$ is an AAMP with difference $d_k \in \Delta^* (G) \subset [1, n-2]$ and bound $M'$. Suppose that $k \ge n-1$. Then $(W_0U_2)^{n-1}$ divides $A_k$. Since $W_0 U_2 = W_1U_1^2$, it follows that
\[
(W_0 U_2)^{n-1} = (W_0 U_2)^{n - 1- \nu} (W_1U_1^2)^{\nu} \quad \text{for all} \quad \nu \in [0,n-1]
\]
and hence $\mathsf L \big( (W_0 U_2)^{n-1} \big) \supset [2n-2, 3n-3]$. Thus $\mathsf L (A_k)$ contains an AP with difference $1$ and length $n-1$. Therefore there is a bound $M \in \N_0$ such that $\mathsf L (A_k)$ is an AAP with difference $1$ and bound $M$ for all $k \ge n-1$.

{\it Proof of \,{\bf A2}}.\, Let $k \in \N$. Observe that
\[
A_k = \begin{cases} W_0^{k} (-W_0)^{k} W_{n/2}^{2} & \quad   \ \text{if} \ n \ \text{is even}, \\
                     W_0^{k} (-W_0)^{k} \big( W_{(n-1)/2})^2 & \quad  \ \text{if} \ n \ \text{is odd},
      \end{cases}
\]
and it can be seen that $\min \mathsf L (A_k) = 2k+2$. We assert that $2k+3 \notin \mathsf L (A_k)$. If $n$ is even, then
\[
W_0 W_{n/2} = W_j W_{n/2-j} \quad \text{for each} \quad j \in [0, n/2] \,,
\]
and similarly, for odd $n$ we have
\[
W_0 W_{(n-1)/2} = W_j W_{(n-1)/2 - j} \quad \text{for each} \quad j \in [0, (n-1)/2] \,.
\]
In both cases, all factorizations of $A_k$ of length $2k+2$ contain only atoms with $g$-norm  $1$ and with $g$-norm  $n-1$. Let $z'$ be any factorization of $A_k$ containing only atoms with $g$-norm  $1$ and with $g$-norm $n-1$. Then $|z'| - |z|$ is a multiple of $n-2$ whence if $|z'| > |z|$, then $|z'|-|z| \ge n-2 > 1$.

Next we consider a factorization $z'$ of $A_k$ containing at least one atom with $g$-norm  $2$, say $z'$ has $r$ atoms with $g$-norm  $n-1$, $s \ge 1$ atoms with $g$-norm  $2$, and $t$ atoms with $g$-norm  $1$. Then $k > r$,
\[
\|A_k\|_g = k(n-1) + (k+2) = r(n-1)+ 2s + t \,,
\]
and we study
\[
\begin{aligned}
|z'|-|z| & = r+s+t - (2k+2) \\
 & = r+s+k(n-1)+(k+2)-r(n-1)-2s-(2k+2) \\
 & = (k-r)(n-2)-s \,.
\end{aligned}
\]
Note that $s \le \mathsf v_{2g} (A_k) \le n$. Thus, if $k-r \ge 2$, then
\[
(k-r)(n-2)-s \ge 2n-4-s \ge n-4 > 1 \,.
\]
Suppose that $k-r=1$. Then we cancel $(-W_0)^{k-1}$, and consider a relation where $-W_0$ occurs precisely once. Suppose that all $s$ atoms of $g$-norm $2$ are equal to $U_2$. Since $\mathsf v_{-g} (U_2) = 2$, it follows that $s \le \mathsf v_{-g}(-W_0)/2 = n/2$ whence
\[
(k-r)(n-2) - s \ge n-2 - n/2 = n/2 - 2 > 1 \,.
\]
Suppose that $V_1$ occurs among the $s$ atoms with $g$-norm $2$. Then $n$ is odd, $V_1$ occurs precisely once,  and
\[
s-1 \le \mathsf v_{2g} (A_k) - \frac{n+1}{2} = (n-1) - \frac{n+1}{2} = \frac{n-3}{2} \,,
\]
whence
\[
(k-r)(n-2) - s \ge (n-2) - \frac{n-1}{2} = \frac{n+1}{2} - 2 > 1 \,. \qedhere
\]
\end{proof}

In order to handle the cyclic group with six elements (done in Proposition \ref{3.3}), we need five lemmas.

\begin{lemma} \label{wichtig-1}
Let $G$ be a cyclic group of order $|G|=6$, $g \in G$ with $\ord (g)=6$, and $A \in \mathcal B (G)$ with $\supp (A) = \{g, 2g, 3g, - g\}$. Then $\mathsf L (A)$ is an {\rm AP} with difference $1$.
\end{lemma}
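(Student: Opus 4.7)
The plan is to parameterize $A = g^a (2g)^b (3g)^c (-g)^d$ with $a,b,c,d \ge 1$ subject to the zero-sum condition $a + 2b + 3c - d \equiv 0 \pmod 6$. Exploiting the symmetry $g \leftrightarrow -g$ on $\mathcal B(G)$, I may assume $a \le d$. The goal is to show $[\min \mathsf L(A), \max \mathsf L(A)] \subset \mathsf L(A)$, which I will attempt by combining Lemma~\ref{wichtig-0} for the upper end with an explicit enumeration of atoms and a downward rewriting argument.

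For $\max \mathsf L(A)$, the length-$2$ atoms $U_1 = g(-g)$ and $T = (3g)^2$ satisfy $U_1^a T^{\lfloor c/2 \rfloor} \mid A$ in $\mathcal B(G)$, so Lemma~\ref{wichtig-0} produces a factorization of maximal length containing this product. Extracting it leaves a cofactor $B$ whose support is a strict subset of $\{g,2g,3g,-g\}$, and over each such proper sub-support the minimal zero-sum sequences form a short explicit list (for example over $\{-g,2g\}$ they are $(2g)^3$, $(-g)^2(2g)$, and $(-g)^6$). This yields $\max \mathsf L(A) = a + \lfloor c/2 \rfloor + \max \mathsf L(B)$ in closed form.

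For the downward step I would enumerate the minimal zero-sum sequences over the full support: besides $U_1$ and $T$ the short ones are $W = (2g)^3$, $X = g \cdot 2g \cdot 3g$, $Z = (-g)^2 \cdot 2g$, $Y_\pm = (\pm g)^3 \cdot 3g$, $P = g^2(2g)^2$ and $Q = (-g) \cdot 3g \cdot (2g)^2$, together with a few longer atoms of length up to $\mathsf D(C_6)=6$. The crucial three-into-two merge
\[
U_1 \cdot T \cdot W \;=\; X \cdot Q
\]
decreases the number of atoms by exactly $1$ without altering the product, and analogous identities should be available for other atom combinations. I would then induct on $|A|=a+b+c+d$, the inductive step factoring out one copy of $U_1$ (always possible since $a \ge 1$) or of $T$ (when $c \ge 2$) and invoking the induction hypothesis on the strictly smaller sequence to insert the missing length.

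The main obstacle is the case analysis, which is unavoidable because Proposition~\ref{2.2}.4 does \emph{not} apply here: $\supp(A) \cup \{0\} = \{0, g, 2g, 3g, 5g\}$ misses $4g$ and is not a subgroup of $C_6$, so no soft structural result forces $\mathsf L(A)$ to be an {\rm AP}. Every non-minimum factorization must therefore be shown by hand to admit a length-decreasing rewrite, and some configurations may be sparse in the atoms appearing in the basic merges, so additional identities — for instance involving the length-$5$ atom $g^4 \cdot 2g$ or the length-$6$ atom $g^6$ — are likely to be required to close the argument.
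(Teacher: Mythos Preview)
Your symmetry reduction is incorrect and this undercuts the whole plan. The involution $x\mapsto -x$ on $G$ sends $2g$ to $4g$, so it maps the support $\{g,2g,3g,-g\}$ to $\{g,3g,4g,-g\}$, not to itself. Thus you cannot assume $a\le d$ within this lemma (the paper in fact handles the two supports $\{g,2g,3g,-g\}$ and $\{g,3g,4g,-g\}$ by separate lemmas, using exactly this symmetry to pass from one to the other, not to reduce inside one of them). Your use of Lemma~\ref{wichtig-0} with $U_1^{a}$ then fails: only $U_1^{\min(a,d)}$ divides $A$, and after extracting $U_1^{\min(a,d)}T^{\lfloor c/2\rfloor}$ the cofactor need not have strictly smaller support, so the ``closed form for $\max\mathsf L(A)$'' step does not go through as stated. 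The induction on $|A|$ has the same boundary problem: when $a=1$ (or $d=1$), removing $U_1$ changes the support and you leave the scope of the inductive hypothesis; and even when it applies, $1+\mathsf L(A')\subset\mathsf L(A)$ does not by itself make $\mathsf L(A)$ an interval, since $\min\mathsf L(A)$ can lie strictly below $1+\min\mathsf L(A')$.

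For comparison, the paper's proof runs in the opposite direction and avoids these issues entirely. It lists all twelve atoms over $\{g,2g,3g,-g\}$ grouped by their $g$-norm (eight of norm~$1$, two of norm~$2$, one each of norms $3$ and $5$), then for an arbitrary factorization $z$ shows directly that either $|z|=\max\mathsf L(A)$ or $|z|+1\in\mathsf L(A)$. The case split is on which atoms of $g$-norm $>1$ occur in $z$; for each, an explicit identity of the type $3\in\mathsf L(U_iU_j)$ produces a factorization one longer, and when no such identity applies Lemma~\ref{wichtig-0} certifies that $|z|$ is already maximal. Your single merge $U_1\cdot T\cdot W=X\cdot Q$ is one such identity in the reverse direction, but as you suspected, one identity is far from enough: the paper needs roughly a dozen, and the $g$-norm bookkeeping is what organizes them.
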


\begin{proof}
We set $G_0 = \{g, 2g, 3g, - g\}$ and list all atoms of $\mathcal B (G_0)$:
\begin{itemize}
\item[] $\| \cdot  \|_g = 1:$ $g^6$, $g^4 (2g)$, $ g^3(3g)$, $g^2(2g)^2$, $g(2g)(3g)$, $g(-g)$, $(2g)^3$, $(3g)^2$

\item[] $\| \cdot  \|_g = 2:$ $U_1 = (2g)^2(3g)(-g)$, $U_2 = (2g)(-g)^2$,

\item[] $\| \cdot  \|_g = 3:$ $U_3 = (3g)(-g)^3$,

\item[] $\| \cdot  \|_g = 5:$ $U_4=(-g)^6$.
\end{itemize}

We choose a factorization $z \in \mathsf Z (A)$ and we show that either $|z|= \max \mathsf L (A)$ or that $|z|+1 \in \mathsf L (A)$.
In each of the following cases it easily follows that $|z|+1 \in \mathsf L (A)$.
\begin{itemize}
\item $U_1^2 \mid z$ in $\mathsf Z (G)$, because $3 \in \mathsf L (U_1^2)$.

\item $U_1U_3 \mid z$ in $\mathsf Z (G)$, because $3 \in \mathsf L (U_1 U_3)$.

\item $U_1U_4 \mid z$ in $\mathsf Z (G)$, because $3 \in \mathsf L (U_1 U_4)$.
\end{itemize}
From now on we suppose that none of the above cases occurs. If $z$ is divisible only by atoms with $g$-norm one and by at most one atom having $g$-norm two, then we are done. Thus it is sufficient to handle the following four cases.

\noindent
CASE 1: $U_1 \mid z$.

If $z$ is divisible by an atom $W$ with $\mathsf v_g (W) \ge 1$ and $W \ne (-g)g$, then $3 \in \mathsf L (U_1W)$ and we are done. Suppose this does not hold. Thus
\[
z = U_1 U_2^{k_2} \Big( (2g)^3 \Big)^{k_3} \Big( (3g)^2 \Big)^{k_4} \Big( (-g)g \Big)^{k_5} \,,
\]
with $k_2 \in \N$ and $k_3,k_4,k_5 \in \N_0$. Then $|z|=1+k_2 + \ldots + k_5$ and we assert that $|z|=\max \mathsf L (A)$. By Lemma \ref{wichtig-0} it is sufficient to show that
\[
1 + k_2+k_3 = \max \mathsf L \Big(  U_1 U_2^{k_2} \Big( (2g)^3 \Big)^{k_3}   \Big) \,.
\]
Each atom $X \in \mathcal B (G_0)$ with $X \mid U_1 U_2^{k_2} \Big( (2g)^3 \Big)^{k_3}$ and $(2g)\mid X$ has length $|X| \ge 3$ whence the claim follows.

\noindent
CASE 2: $U_3 \mid z$ and $U_1 \nmid z$.

If $z$ is divisible by an atom $W$ with $\mathsf v_{2g} (W) \ge 1$. If $W \in \{ g^4(2g), g^2(2g)^2,  (2g)^3,$ $g(2g)(3g)\}$, then $3 \in \mathsf L (U_3W)$ and we are done. If this is not the case, then all atoms $W$ are equal to $U_2$. Thus $U_2U_3$ divides $z$. Thus $z$ is divisible by an atom $W$ with $\mathsf v_g (W) \ge 1$. If $W \in \{g^6, g^3(3g) \}$, then $3 \in \mathsf L (U_2W)$ and we are done. Thus we may assume that any atom $W$ with $\mathsf v_g (W) \ge 1$ is equal to $V=(-g)g$. This implies that $\mathsf v_g (A) = \mathsf v_V (z)$. We set
\[
z = U_2^{k_2}U_3^{k_3}U_4^{k_4} V^{k_4} \Big( (3g)^2 \Big)^{k_5}
\]
and assert that $|z|=\max \mathsf L (A)$. By Lemma \ref{wichtig-0} it is sufficient to show that
\[
k_2+k_3+k_4 = \max \mathsf L \Big( U_2^{k_2}U_3^{k_3}U_4^{k_4}  \Big) \,,
\]
and this holds true.

\noindent
CASE 3: $U_2 \mid z$ and $U_1 \nmid z$ and $U_3 \nmid z$.

If $z$ is divisible by an atom $W$ with $\mathsf v_{2g} (W) \ge 2$, then $3 \in \mathsf L (U_2W)$ and we are done. If $z$ is divisible by $W = g(2g)(3g)$, then $U_2W= U_1 \Big( g(-g) \Big)$. Thus there is a factorization $z' \in \mathsf Z (A)$ with $|z|=|z'|$ and $U_1 \mid z'$ and the claim follows from CASE 1.

Suppose that this is not the case. Then the each atom dividing $z$ and containing $g$ is $V=(-g)g$ whence $\mathsf v_g (A) = \mathsf v_V (z)$. If $(2g)^3$ and $U_4$ divide $z$, then $|z|+1 \in \mathsf L (A)$ because $3 \in \mathsf L ( (3g)^2 U_4)$; otherwise $|z|= \max \mathsf L (A)$.

\noindent
CASE 3: $U_4 \mid z$ and $U_1 \nmid z$ and $U_2 \nmid z$ and $U_3 \nmid z$.

If $W= (2g)^3$ divides $z$, then $3 \in \mathsf L (U_4W)$ and we are done. Suppose this is not the case. If $W = g(2g)(3g)$ divides $z$, then $3 \in \mathsf L (U_4W)$ and we are done. Suppose this is not the case.  Then there are atoms $W_1, W_2$ such that $2g \mid W_1$ and $3g \mid W_2$ such that $W_1W_2$ divides $z$. Then $W_1 \in \{ g^4(2g), g^2(2g)^2\}$ and $W_2 \in \{(3g)^2, g^3(3g)\}$. In each case we obtain that $W_1W_2 = W W_4$ for some atom $W_4 \in \mathcal A (G)$. Thus we obtain a factorization $z' \in \mathsf Z (A)$ with $|z|=|z'|$ with $WU_4 \mid z'$ and we are done.
\end{proof}

\begin{lemma} \label{wichtig-2A}
Let $G$ be a cyclic group of order $|G|=6$, $g \in G$ with $\ord (g)=6$, and $A \in \mathcal B (G)$ with $\supp (A) = \{g, 3g, 4g\}$. Then $\mathsf L (A)$ is an {\rm AP} with difference $1$.
\end{lemma}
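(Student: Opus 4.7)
Set $G_0 = \{g, 3g, 4g\}$. First I would enumerate the atoms of $\mathcal B(G_0)$: writing an atom as $g^\alpha (3g)^\beta (4g)^\gamma$ with $\alpha + 3\beta + 4\gamma \equiv 0 \pmod 6$ and imposing minimality yields exactly six atoms. The four of $g$-norm~$1$ are $g^6$, $g^3(3g)$, $(3g)^2$, $g^2(4g)$, and the two of $g$-norm~$2$ are $g(3g)(4g)^2$ and $(4g)^3$ (note that $g^5(3g)(4g)$ contains the zero-sum $g^3(3g)$ and so is not an atom). Since $\|\cdot\|_g$ is a homomorphism and takes values in $\{1,2\}$ on $\mathcal A(G_0)$, every $z \in \mathsf Z(A)$ satisfies $|z| = \|A\|_g - y(z)$, where $y(z)$ counts the norm-$2$ atoms of $z$. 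Consequently $\mathsf L(A)$ is an {\rm AP} with difference~$1$ if and only if $Y(A) := \{y(z) : z \in \mathsf Z(A)\}$ is a set of consecutive integers.

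To establish the latter I would show: for every $z$ with $y(z) \geq 1$, either $y(z) = \min Y(A)$, or there exists $z' \in \mathsf Z(A)$ with $y(z') = y(z)-1$. The second alternative would be produced via five length-increasing identities between factorizations, each decreasing $y$ by exactly one:
\begin{align*}
(4g)^3 \cdot g^6 &= \bigl(g^2(4g)\bigr)^3, \\
g(3g)(4g)^2 \cdot g^6 &= \bigl(g^2(4g)\bigr)^2 \cdot g^3(3g), \\
g(3g)(4g)^2 \cdot g^3(3g) &= \bigl(g^2(4g)\bigr)^2 \cdot (3g)^2, \\
\bigl(g(3g)(4g)^2\bigr)^2 &= (4g)^3 \cdot g^2(4g) \cdot (3g)^2, \\
\bigl(g^3(3g)\bigr)^2 \cdot (4g)^3 &= \bigl(g^2(4g)\bigr)^3 \cdot (3g)^2.
\end{align*}
Let $v, w$ denote the multiplicities in $z$ of $g(3g)(4g)^2$ and $(4g)^3$, and $a, b$ those of $g^6$ and $g^3(3g)$. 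I would apply the identities in cascade: the first if $w \geq 1$ and $a \geq 1$; else the second if $v \geq 1$ and $a \geq 1$; else the third if $v \geq 1$ and $b \geq 1$; else the fourth if $v \geq 2$; else the fifth if $w \geq 1$ and $b \geq 2$.

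Two residual shapes escape this cascade: (i) $v = 1$ and $a = b = 0$; and (ii) $v = 0$, $w \geq 1$, $a = 0$, and $b \leq 1$. In each, writing down the exponent equations for a hypothetical factorization of $A = g^\alpha(3g)^\beta(4g)^\gamma$ with prescribed $(v',w')$ reduces feasibility to the single linear inequality $3v' + 6w' \geq 2\gamma - \alpha$, together with the parity condition $b' \equiv \beta - v' \pmod 2$ forced by the auxiliary equations $b' + 2c' = \beta - v'$ and $6a' + 3b' = \alpha - 2\gamma + 3v' + 6w'$. A direct calculation in each of (i) and (ii) shows that no $(v',w')$ with $v' + w' < y(z)$ satisfies these constraints, so $y(z) = \min Y(A)$ and $|z| = \max \mathsf L(A)$. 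Combined with the preceding step, this proves $Y(A)$ is an interval and hence $\mathsf L(A)$ is an {\rm AP} with difference~$1$. The main obstacle is the exhaustive bookkeeping: verifying that the five identities really cover every configuration in which $y(z)$ can be lowered, and that the linear-plus-parity check in the two residual shapes is tight enough to preclude any factorization with strictly smaller $y$.
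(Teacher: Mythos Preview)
Your proof is correct and follows essentially the same strategy as the paper: enumerate the six atoms over $G_0=\{g,3g,4g\}$, note that all $g$-norms lie in $\{1,2\}$, and show that every factorization is either of maximal length or can be lengthened by exactly one via explicit identities among atoms. The paper organizes the bookkeeping slightly differently---it uses the auxiliary Lemma~\ref{wichtig-0} on length-two atoms together with the normalizing identity $\bigl(g^3(3g)\bigr)^2=g^6\cdot(3g)^2$ to reduce the residual max-length check to a crude $|A'|/3$ bound, whereas you handle the two residual shapes by the explicit linear inequality $3v'+6w'\ge 2\gamma-\alpha$; but the core argument is the same, and your five identities include (or are equivalent to) the ones the paper uses.
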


\begin{proof}
We set $G_0 = \{g,  3g, 4g\}$ and list all atoms of $\mathcal B (G_0)$:
\begin{itemize}
\item[] $\| \cdot  \|_g = 1:$ $g^6$,  $V_3= g^3(3g)$,  $V_5=g^2(4g)$,   $V_4=(3g)^2$

\item[] $\| \cdot  \|_g = 2:$ $U_1 = (4g)^2(3g)g$, $U_2 = (4g)^3$,
\end{itemize}

We choose a factorization $z \in \mathsf Z (A)$ and we show that either $|z|= \max \mathsf L (A)$ or that $|z|+1 \in \mathsf L (A)$.
If $z$ is divisible by at most one atom having $g$-norm two, then the claim holds. Thus from now on we suppose that $z$ is divisible by at least two atoms having $g$-norm two.
Since $U_1^2 = (3g)^2 \Big( (4g)g^2 \Big) \Big( (4g)^3\Big)$, we may suppose that $U_1^2 \nmid z$. Thus $U_1U_2$ divides $z$ or $U_2^2$ divides $z$.

Since $V_3^2 = (g^6) \Big( (3g)^2 \Big)$, we may suppose that $\mathsf v_{V_3} (z) \le 1$.
If $z$ is divisible by $ g^6$, then $3 \in \mathsf L (U_2g^6)$ shows that $|z|+1 \in \mathsf L (A)$. Suppose that $g^6 \nmid z$. It follows that
\[
z =  U_1^{\ell_1}U_2^{\ell_2}  V_3^{\ell_3} V_4^{\ell_4}V_5^{\ell_5} \,.
\]
where $\ell_1 \le 1$, $\ell_3 \le 1$. Since $U_1V_3=V_4V_5^2$ we may suppose that $\ell _1+ \ell_3 \le 1$.
We assert that $|z|= \max \mathsf L (A)$, and by Lemma \ref{wichtig-0} it suffices to show that
\[
\ell_1+\ell_2+\ell_3+\ell_5 = \max \mathsf L (A')\,, \text{where} \ A' = U_1^{\ell_1}U_2^{\ell_2}V_3^{\ell_3}V_5^{\ell_5} \,.
\]
Since $A'$ is not divisible by an atom of length $2$, it follows that $\max \mathsf L (A') \le |A'|/3$. Since $|U_2|=|V_5|=3$ and $\ell_1 + \ell_3 \le 1$, we infer that $\ell_1+\ell_2+\ell_3+\ell_5 = \lfloor |A'|/3 \rfloor$.
\end{proof}

\begin{lemma} \label{wichtig-2}
Let $G$ be a cyclic group of order $|G|=6$, $g \in G$ with $\ord (g)=6$, and $A \in \mathcal B (G)\!$ with $\supp (A) = \!\{g, 2g, 3g, 4g\}.\!$ Then $\!\mathsf L (A)\!$ is an {\rm AP} with difference $1$.
\end{lemma}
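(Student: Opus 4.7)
The plan is to follow the template of Lemmas \ref{wichtig-1} and \ref{wichtig-2A}. Setting $G_0 = \{g, 2g, 3g, 4g\}$, I would first list all atoms of $\mathcal B(G_0)$ organised by $g$-norm. A direct check yields the norm-$1$ atoms $g^6$, $g^4(2g)$, $g^3(3g)$, $g^2(2g)^2$, $g(2g)(3g)$, $g^2(4g)$, $(2g)^3$, $(2g)(4g)$, $(3g)^2$, and shows that the only atoms of $g$-norm at least $2$ are the two already encountered in Lemma \ref{wichtig-2A}, namely $U_1 = g(3g)(4g)^2$ and $U_2 = (4g)^3$ (both of norm $2$). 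Fix $z \in \mathsf Z(A)$; it suffices to establish that either $|z| = \max \mathsf L(A)$ or $|z|+1 \in \mathsf L(A)$.

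Next, I would compile a library of length-increasing rewrites: identities converting a length-$2$ product of atoms into a length-$3$ product. The cleanest examples are $U_1^2 = (3g)^2 \cdot g^2(4g) \cdot (4g)^3$, $U_2 g^6 = (g^2(4g))^3$, $U_2(2g)^3 = ((2g)(4g))^3$, $U_1 g^6 = g^3(3g) \cdot (g^2(4g))^2$, $U_1(2g)^3 = g(2g)(3g) \cdot ((2g)(4g))^2$, $U_2 g^4(2g) = (2g)(4g) \cdot (g^2(4g))^2$, $U_2 g^2(2g)^2 = g^2(4g) \cdot ((2g)(4g))^2$, together with analogous expansions of $U_1 \cdot g^4(2g)$, $U_1 \cdot g^2(2g)^2$, $U_1 \cdot g^3(3g)$, and $U_1 \cdot g(2g)(3g)$. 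Whenever any such left-hand side divides $z$ in $\mathsf Z(G)$, performing the corresponding substitution produces a factorization of length $|z|+1$. I would also record the length-preserving swaps $U_1 \cdot g^2(4g) = U_2 \cdot g^3(3g)$ and $U_1 \cdot (2g)(4g) = U_2 \cdot g(2g)(3g)$, which exchange $U_1$'s for $U_2$'s without altering length.

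The case analysis then splits on the values of $\mathsf v_{U_1}(z)$ and $\mathsf v_{U_2}(z)$. If any of the length-increasing rewrites applies, we are done. Otherwise $z$ is \emph{stuck}: every $U_1$ in $z$ is accompanied only by norm-$1$ atoms from $\{g^2(4g), (2g)(4g), (3g)^2\}$, and every $U_2$ in $z$ only by norm-$1$ atoms from $\{g^3(3g), g^2(4g), g(2g)(3g), (2g)(4g), (3g)^2\}$, since all other norm-$1$ atoms would trigger a rewrite. In each such configuration I would imitate the closing arguments of Lemma \ref{wichtig-1}: after peeling off the rigid pieces (the $U_i$'s and the norm-$1$ atoms of length $\ge 3$), apply Lemma \ref{wichtig-0} using the length-$2$ atoms $(3g)^2$ and $(2g)(4g)$ to reduce the claim $|z| = \max \mathsf L(A)$ to a check on a smaller residual sequence whose maximum length is computed directly.

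The main obstacle will be the combinatorial bookkeeping: the four-element support together with the two norm-$2$ atoms generates noticeably more stuck configurations than appear in Lemma \ref{wichtig-2A} or Lemma \ref{wichtig-1}, and each must be verified by matching the multiplicities $\mathsf v_g(A)$, $\mathsf v_{2g}(A)$, $\mathsf v_{3g}(A)$, $\mathsf v_{4g}(A)$ against the atom content of $z$ to ensure that no length-increasing rearrangement has been overlooked, in particular in the hybrid cases where both $U_1$ and $U_2$ divide $z$.
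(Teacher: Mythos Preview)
Your proposal is correct and follows essentially the same route as the paper: list the atoms of $\mathcal B(\{g,2g,3g,4g\})$ by $g$-norm, compile length-increasing rewrites involving the two norm-$2$ atoms $U_1,U_2$, use the swaps $U_1\cdot g^2(4g)=U_2\cdot g^3(3g)$ and $U_1\cdot(2g)(4g)=U_2\cdot g(2g)(3g)$ to normalise, and finish the stuck configurations via Lemma~\ref{wichtig-0}. The paper streamlines the endgame with a few extra length-preserving rewrites among norm-$1$ atoms (e.g.\ $(g^3(3g))^2=g^6(3g)^2$, $(g(2g)(3g))^2=g^2(2g)^2(3g)^2$) to eliminate $U_1$ entirely and cut down the stuck cases, but this is an organisational refinement of exactly the bookkeeping you anticipate rather than a different idea.
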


\begin{proof}
We set $G_0 = \{g, 2g, 3g, 4g\}$ and list all atoms of $\mathcal B (G_0)$:
\begin{itemize}
\item[] $\| \cdot  \|_g = 1:$ $g^6$, $g^4 (2g)$, $V_3= g^3(3g)$, $g^2(2g)^2$, $V_5=g^2(4g)$, $V_1=g(2g)(3g)$, \\
    \phantom{$\| \cdot  \|_g = 1:$} $V_2=(2g)(4g)$,   $(2g)^3$, $V_4=(3g)^2$

\item[] $\| \cdot  \|_g = 2:$ $U_1 = (4g)^2(3g)g$, $U_2 = (4g)^3$,
\end{itemize}

We choose a factorization $z \in \mathsf Z (A)$ and we show that either $|z|= \max \mathsf L (A)$ or that $|z|+1 \in \mathsf L (A)$.
 If $z$ is divisible by at most one atom having $g$-norm two, then the claim holds. Thus from now on we suppose that $z$ is divisible by at least two atoms having $g$-norm two.
Since $U_1^2 = (3g)^2 \Big( (4g)g^2 \Big) \Big( (4g)^3\Big)$, we may suppose that $U_1^2 \nmid z$. Thus $U_1U_2$ divides $z$ or $U_2^2$ divides $z$.

Since $V_3^2 = (g^6) \Big( (3g)^2 \Big)$, we may suppose that $\mathsf v_{V_3} (z) \le 1$. Since $V_1^2 = \Big( g^2(2g)^2 \Big) (3g)^2$, we may suppose that $\mathsf v_{V_1} (z) \le 1$. Since $V_1V_3 = \Big( g^4 (2g) \Big) (3g)^2$, we may suppose that $\mathsf v_{V_3} (z) + \mathsf v_{V_1}(z) \le 1$. Since $V_1V_5 =  V_2V_3$, we may suppose that $\mathsf v_{V_5} (z) + \mathsf v_{V_1}(z) \le 1$.
If $z$ is divisible by some $W \in \{ g^6, g^4(2g), g^2(2g)^2, (2g)^3\}$, then $3 \in \mathsf L (U_2W)$ shows that $|z|+1 \in \mathsf L (A)$. Suppose that none of these four atoms divides $z$.

Since $\mathsf v_{2g}(A) \ge 1$, it follows that $V_1 = g(2g)(3g)$ or $V_2 = (2g)(4g)$ divides $z$. If $U_1V_1 \mid z$, then $3 \in \mathsf L (U_1V_1)$ shows that $|z|+1 \in \mathsf L (A)$. Since $U_1 V_2 = U_2 V_1$, we may suppose that
 $U_1 \nmid z$ whence $z$ has the form
\[
z = V_3^{k_3}V_5^{k_5}V_1^{k_1}V_2^{k_2}V_4^{k_4} U_2^{\ell} \,.
\]
We assert that $|z|= \max \mathsf L (A)$. By Lemma \ref{wichtig-0} it is sufficient to show that
\[
\max \mathsf L \Big( V_3^{k_3}V_5^{k_5}V_1^{k_1} U_2^{\ell} \Big) = k_3+k_5+k_1+\ell \,.
\]
If $k_1=1$, then $k_3=k_5=0$ and $\mathsf L \Big( V_1^{1} U_2^{\ell} \Big) = \{\ell+1\}$. Suppose that $k_1=0$, then $V_3^{k_3}V_5^{k_5}U^{\ell}$ is not divisible by an atom of length two whence (recall that $k_3 \le 1$)
\[
\max \mathsf L \Big( V_3^{k_3}V_5^{k_5}U^{\ell} \Big) \le \left\lfloor \frac{|V_3^{k_3}V_5^{k_5}U^{\ell}|}{3} \right\rfloor = k_3+k_5+\ell \,. \qedhere
\]
\end{proof}

\begin{lemma} \label{wichtig-3}
Let $G$ be a cyclic group of order $|G|=6$, $g \in G$ with $\ord (g)=6$, and $A \in \mathcal B (G)$ with $\supp (A) = \{g, 2g,  -g\}$. If $\mathsf v_{2g} (A) \ge 3$, then $\mathsf L (A)$ is an {\rm AP} with difference $1$. If $\mathsf v_{2g} (A) = 2$, then $\mathsf L (A)$ is an {\rm AMP} with period $\{0,1,2,4\}, \{0,1,3,4\}$, or $\{0,2,3,4\}$,  and if $\mathsf v_{2g} (A) = 1$, then it is an {\rm AMP} with period $\{0,1,4\}$ or $\{0,3,4\}$.
\end{lemma}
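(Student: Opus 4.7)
The plan is to use the short list of atoms of $\mathcal B(G_0)$ with $G_0 = \{g, 2g, -g\}$ together with the $g$-norm homomorphism to parameterize factorizations.

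The seven atoms are $U = g(-g)$, $W_i = g^{6-2i}(2g)^i$ for $i \in [0, 3]$, $V = (2g)(-g)^2$, and $V' = (-g)^6$, with $g$-norms $\|U\|_g = \|W_i\|_g = 1$, $\|V\|_g = 2$, and $\|V'\|_g = 5$. Since $\|\cdot\|_g$ is a homomorphism, every factorization $z$ of $A$ satisfies the identity
\[
|z| = \|A\|_g - \mathsf v_V(z) - 4\, \mathsf v_{V'}(z),
\]
and therefore $\mathsf L(A) = \|A\|_g - \{v + 4v' : (v, v') \in \Omega(A)\}$, where $\Omega(A) \subset \mathbb N_0^2$ is the set of pairs $(\mathsf v_V(z), \mathsf v_{V'}(z))$ realized by some factorization $z$ of $A$. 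Writing $A = g^a(2g)^b(-g)^c$, membership in $\Omega(A)$ reduces to the non-negative integer solvability of $u_0 + 2v + 6v' = c$, $u_0 + 6w_0 + 4w_1 + 2w_2 = a$, and $w_1 + 2w_2 + 3w_3 = b - v$; the necessary mod-$6$ compatibility is automatic from $a + 2b - c \equiv 0 \pmod 6$. I would first verify that, for each admissible $v'$, the set of admissible $v$'s is an interval $V_{v'} = [v^-(v'), v^+(v')]$ with endpoints determined by the constraints $u_0 \in [0, a]$ and $v \in [0, b]$.

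For part (i), with $b \ge 3$, I would show $v^+(v') - v^-(v' + 1) \ge 3$ for every pair of consecutive non-empty slices by an elementary case analysis on whether $v^+(v')$ equals $b$ or $\lfloor (c - 6v')/2 \rfloor$ and whether $v^-(v'+1)$ is $0$ or strictly positive. The three resulting cases each yield a lower bound of $3$ for the difference, using $b \ge 3$, $a \ge 1$, and the non-emptiness of $V_{v'+1}$. Since moving from level $v'$ to $v' + 1$ shifts the contribution $v + 4v'$ by $-4$, an overlap of at least $3$ between adjacent $v$-intervals forces the union $\bigcup_{v'} [4v' + v^-(v'), 4v' + v^+(v')]$ to be an interval in $\mathbb Z$, whence $\mathsf L(A)$ is an {\rm AP} with difference $1$.

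For parts (ii) and (iii), with $b \in \{1, 2\}$, each slice $V_{v'}$ has at most $b + 1$ points, and a case distinction on the residue of $c$ modulo $6$ (fixed by $a$ via $a - c \equiv -2b \pmod 6$) determines exactly which subset of $\{0, \ldots, b\}$ is admissible at each level. At interior levels the admissible subset is the same, yielding a periodic residue pattern modulo $4$ in $\{v + 4v'\}$; reflecting $L = \|A\|_g - S$ converts this pattern into the claimed $L$-periods $\{0,1,2,4\}, \{0,1,3,4\}, \{0,2,3,4\}$ for $b = 2$, and $\{0,1,4\}, \{0,3,4\}$ for $b = 1$. The main obstacle will be the careful bookkeeping at the boundary levels $v' = 0$ and $v' = \lfloor c/6 \rfloor$, where the constraints $u_0 \le a$ or $u_0 \ge 0$ may further truncate a slice; such truncations affect only $\min \mathsf L(A)$ or $\max \mathsf L(A)$ and are absorbed by the {\rm AMP} definition, which prescribes $L$ as the exact intersection $(\min L + \mathcal D + d\mathbb Z) \cap [\min L, \max L]$.
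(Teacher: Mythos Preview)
Your parameterization via $(\mathsf v_V(z),\mathsf v_{V'}(z))$ and the identity $|z|=\|A\|_g-\mathsf v_V(z)-4\mathsf v_{V'}(z)$ is correct and gives a genuinely different route from the paper. For $\mathsf v_{2g}(A)\ge 3$ the paper argues locally: it takes an arbitrary factorization $z$ and, using the relations $V_2U_5=V_1^3$ and $V_1U_i$ (their notation), shows that either $|z|=\max\mathsf L(A)$ or $|z|+1\in\mathsf L(A)$. You instead argue globally, showing the $v'$-slices of $\Omega(A)$ overlap enough to make $\{v+4v'\}$ an interval. For $\mathsf v_{2g}(A)\in\{1,2\}$ the paper splits off the atoms containing $2g$ explicitly (e.g.\ $A=U_2^2A_1=U_3A_2=V_1^2A_3=V_1U_2A_4$) and reduces to sets of lengths over $\{g,-g\}$, then does a residue analysis; your slice picture reorganizes the same data.

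Two points deserve more care than you give them. First, when you assert that the admissible $v$'s are an interval with endpoints governed only by $u_0\in[0,a]$ and $v\in[0,b]$, you are implicitly claiming that once these hold the coupled system $6w_0+4w_1+2w_2=a-u_0$, $w_1+2w_2+3w_3=b-v$ is always solvable in $\N_0$. This is true, but not obvious: one has to check that the residual constraint (after eliminating $w_0,w_3$) is $\epsilon\le b-v$ with $\epsilon\in\{0,1,2\}$ determined by $(a-u_0)/2\bmod 3$, and that the zero-sum condition $a+2b-c\equiv 0\pmod 6$ forces $\epsilon\equiv b-v\pmod 3$, whence $\epsilon\le b-v$ automatically. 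Without this, your overlap bound $v^+(v')-v^-(v'+1)\ge 3$ could drop by~$2$.

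Second, for $b\in\{1,2\}$ your claim that boundary truncations ``are absorbed by the AMP definition'' is where the real content lies. It is correct---because the truncation at large $v'$ always removes a \emph{top} segment of $\{0,\dots,b\}$ (the binding constraint is $u_0\ge 0$, i.e.\ $v\le\lfloor(c-6v')/2\rfloor$) and at small $v'$ a \emph{bottom} segment (from $u_0\le a$), so the missing points always lie outside $[\min S,\max S]$---but this has to be verified, and which of the listed periods actually occurs depends on $\min L\bmod 4$, which in turn depends on the residue of $c$ (equivalently~$a$) modulo~$6$. That residue analysis is not shorter than the paper's.
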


\begin{proof}
We set $G_0 = \{g, 2g, 3g, 4g\}$ and list all atoms of $\mathcal B (G_0)$:
\begin{itemize}
\item[] $\| \cdot  \|_g = 1:$ $U_1 = g^6, U_2 = g^4(2g), U_3 = g^2 (2g)^2, U_4 = (-g)g, U_5 = (2g)^3$,

\item[] $\| \cdot  \|_g = 2:$ $V_1 = (2g)(-g)^2$,

\item[] $\| \cdot  \|_g = 5:$ $V_2 = (-g)^6$.
\end{itemize}
CASE 1: $\mathsf v_{2g} (A)=1$.

Then each factorization $z \in \mathsf Z (A)$ is divisible by $U_2$ or $V_1$.
If $\mathsf v_{g} (A) < 4$ or $\mathsf v_{-g} (A) < 2$, it is easy to see that $\mathsf L (A)$ is a singleton. Thus, we assume that neither is the case and write $A = (2g)g^{v+4}(-g)^{w+2}$. Since $A$ is a zero-sum sequence, it follows that  $v\equiv w +2 \pmod{6}$.

We determine the set of lengths of $A$. Let $z$ be a factorization of $A$.
If $z$ contains $U_2$, then $z$ is of the form $U_2 z'$ where $z'$ is a factorization of $g^v (-g)^{w+2}$.
If $z$ contains $V_1$, then $z$ is of the form $V_1 z''$ where $z''$ is a factorization of $g^{v+4}(-g)^{w}$.
Conversely, for each factorization $z'$  of $g^v (-g)^{w+2}$ we have that $U_2 z'$ is a factorization of $A$, and
for each factorization $z''$  of $g^v (-g)^{w+2}$ we have that $V_1 z''$ is a factorization of $A$.
Consequently $\mathsf L (A) = 1 + (\mathsf L (  g^v (-g)^{w+2} )  \cup  \mathsf L (  g^{v+4} (-g)^{w}))$.

We determine $\mathsf L (  g^v (-g)^{w+2} )  \cup  \mathsf L (  g^{v+4} (-g)^{w})$.
If $v$ is odd, then each factorization contains of $g^v (-g)^{w+2}$ and $g^{v+4} (-g)^{w}$ contains $(-g)g$ and
$\mathsf L (  g^v (-g)^{w+2} )  \cup  \mathsf L (  g^{v+4} (-g)^{w}) = 1 + (\mathsf L (  g^{v-1} (-g)^{w-1+2} )  \cup  \mathsf L (  g^{v-1+4} (-g)^{w-1}))$.
We thus focus on the case that $v$ is even.

If $v \equiv 0 \pmod{6}$, then
\[\mathsf L ( g^v (-g)^{w+2}) = \frac{v+w+2}{6} + 4 \cdot [0, \frac{\min\{v,w+2\}}{6}]\]
and
\[\mathsf L ( g^{v+4} (-g)^{w}) = 4 + \frac{v+w-4}{6} + 4 \cdot [0, \frac{\min\{v,w-4\}}{6}] = 3 + \frac{v+w+2}{6} + 4 \cdot [0, \frac{\min\{v,w-4\}}{6}].\]
Thus, the union is an AMP with difference $4$ and period $\{0,3,4\}$.

If $v \equiv 2 \pmod{6}$, then
\[\mathsf L ( g^v (-g)^{w+2}) = 2 + \frac{v-2 +  w}{6} + 4 \cdot [0, \frac{\min\{v-2,w\}}{6}]= 1 + \frac{v+4 +  w}{6} + 4 \cdot [0, \frac{\min\{v-2,w\}}{6}]\]
and
\[\mathsf L ( g^{v+4} (-g)^{w}) =  \frac{v+4 +w}{6} + 4 \cdot [0, \frac{\min\{v+4,w\}}{6}].\]
Thus, the union is an AMP with difference $4$ and period $\{0,1,4\}$.

If $v \equiv 4 \pmod{6}$, then
\[
\begin{aligned}
\mathsf L ( g^v (-g)^{w+2}) & = 4+ \frac{v-4 +  w-2}{6} + 4 \cdot [0, \frac{\min\{v-4, w-2\}}{6}] \\ & = 3+ \frac{v +  w}{6} + 4 \cdot [0, \frac{\min\{v-4, w-2\}}{6}]
\end{aligned}
\]
and
\[\mathsf L ( g^{v+4} (-g)^{w}) =  2 + \frac{v +w}{6} + 4 \cdot [0, \frac{\min\{v+2, w-2\}}{6}].\]
Thus, the union is an AMP with difference $4$ and period $\{0,1,4\}$.

\smallskip
\noindent
CASE 2: $\mathsf v_{2g} (A)=2$.

We set $A = (2g)^2 g^{v + 8}(-g)^{w+4}$ with $\mathsf v_g (A) = v+8 > 0$, $\mathsf v_{-g}(A) = w+4 > 0$, and $v, w \in \Z$. Then $v \equiv w + 4 \mod 6$. If $w$ is odd, then $\mathsf Z (A) = U_4 \mathsf Z (A')$ with $A' = U_4^{-1}A$. Thus we may suppose that $w$ is even.

First suppose that $\mathsf v_{-g} (A) < 4$. Then $\mathsf v_{-g} (A) =2$, $A = (-g)^2 (2g)^2 g^{6m +4}$ for some $m \in \N_0$, and $\mathsf L (A) = \{m+2, m+3\}$ is an AP with difference $1$. Now suppose that $\mathsf v_{-g} (A) \ge 4$. We discuss the cases where $\mathsf v_{g} (A) \in \{2,4,6\}$. If $\mathsf v_g (A) = 2$, then $A = g^2 (2g)^2 (-g)^{6m}$ for some $m \in \N$ whence $\mathsf L (A) = \{m+1, m+2\}$. If $\mathsf v_g (A) = 4$, then $A = g^2 (2g)^2 (-g)^{6m+2}$ for some $m \in \N$,
\[
A = U_2 V_1 (-U_1)^m = U_3U_4^2 (-U_1)^m = V_1^2U_4^4 (-U_1)^{m-1}
\]
whence $\mathsf L (A) = \{2+m,3+m,5+m\}$ is an AMP with period $\{0,1,3,4\}$. If $\mathsf v_g (A) = 6$, then $A = g^6 (2g)^2 (-g)^{6m+4}$ for some $m \in \N_0$,
\[
A = U_3U_4^4 (-U_1)^m = U_2V_1 U_4^2 (-U_1)^m = V_1^2 U_1 (-U_1)^m = V_1^2U_4^6 (-U_1)^{m-1} \,,
\]
where the last equations holds only in case $m \ge 1$. Thus $\mathsf L (A) = \{3,4,5\}$ or $\mathsf L (A) = \{3+m,4+m,5+m,7+m\}$ if $m \in \N$.

Now we suppose that $\mathsf v_{-g} (A) \ge 4$ and $\mathsf v_{g} (A) \ge 8$ whence $A = (2g)^2 g^{v + 8}(-g)^{w+4}$ with $v, w \in \N_0$ even. Then
\[
\begin{aligned}
A & = U_2^2 g^v (-g)^{w+4} = U_2^2A_1
   = U_3 g^{v+6}(-g)^{w+4} = U_3A_2 \\
  & = V_1^2 g^{v+8}(-g)^w = V_1^2 A_3
   = V_1U_2 g^{v+4}(-g)^{w+2} = V_1U_2 A_4
\end{aligned}
\]
whence
\[
\begin{aligned}
\mathsf L (A) & = 2 + \Big( \mathsf L (A_1) \cup \big(-1+\mathsf L (A_2) \big) \cup \mathsf L (A_3) \cup \mathsf L (A_4) \Big) \\
     & = 2 + \Big(  \big(-1+\mathsf L (A_2) \big) \cup \mathsf L (A_3) \cup \mathsf L (A_4) \Big) \,,
\end{aligned}
\]
where the last equation holds because  $1+\mathsf L (A_1) \subset \mathsf L ( g^6 A_1)$ and $g^6A_1 = A_2$. The sets $\mathsf L (A_2), \mathsf L (A_3)$ and $\mathsf L (A_4)$ are APs with difference $4$. Thus in order to show that $\mathsf L (A)$ is an AMP with difference $4$, we study the minima and the lengths of $-1+\mathsf L (A_2), \mathsf L (A_3)$ and $\mathsf L (A_4)$. We distinguish three cases depending on the congruence class of $v$ modulo $6$ (recall that $v$ is even).

Suppose that $v \equiv 0 \mod 6$. The set $-1+\mathsf L (A_2)$ has minimum $a_0 = (v+w+4)/6$ amd length $\ell_0 = \min \{v+6, w+4\}/6$. The set $\mathsf L (A_3)$ has minimum $a_0+2$ and length $\min \{v+6, w-2\}/6 \in \{\ell_0, \ell_0 - 1\}$. Finally $\mathsf L (A_4)$ has minimum $a_0+3$ and length $\ell_0 - 1$. This implies that $\mathsf L (A)$ is an AMP with period $\{0,2,3,4\}$.

Suppose that $v \equiv 2 \mod 6$. Then $\mathsf L (A_4)$ has minimum $a_2= (v+w+6)/6$ and length $\ell_2=\min \{v+4, w+2\}/6$. The set $-1 + \mathsf L (A_2)$ has minimum $1+a_2$ and length $\ell_2$. The set $\mathsf L (A_3)$ has minimum $a_2+3$ and length $\ell_2$ or $\ell_2-1$. This implies that $\mathsf L (A)$ is an AMP with period $\{0,1,3,4\}$.

Suppose that $v \equiv 4 \mod 6$. The set $\mathsf L (A_3) $ has minimum $a_4= (v+w+8)/6$ and length $\ell_4 = \min \{v+8,w\}/6$. The set $\mathsf L (A_4)$ has minimum $a_4+1$ and length $\ell_4$ or $\ell_4-1$. The set $-1+\mathsf L (A_2)$ has minimum $a_4+2$ and the same length as $\mathsf L (A_4)$. This implies that $\mathsf L (A)$ is an AMP with period $\{0,1,2,4\}$.

\smallskip
\noindent
CASE 3: $\mathsf v_{2g} (A) \ge 3$.

We assert that $\mathsf L (A)$ is an AP with difference $1$.
We choose a factorization $z \in \mathsf Z (A)$ and we show that either $|z|= \max \mathsf L (A)$ or that $|z|+1 \in \mathsf L (A)$. Since $V_2U_5 = V_1^3$, we may suppose that $\min \{\mathsf v_{V_2}(z), \mathsf v_{U_5}(z) \} = 0$.

Suppose that $V_1 \mid z$. If $U_i \mid z$ for some $i \in [1,3]$, then $|z|+1 \in \mathsf L (A)$. If $\mathsf v_{U_5}(z)=0$, then $|z|=\max \mathsf L (A)$. Suppose that $\mathsf v_{V_2} (z)=0$. Then $z= U_4^{k_4}U_5^{k_5}V_1^{\ell_1}$ with $k_4, k_5, \ell_1 \in \N_0$,  and we claim that then $|z|=\max \mathsf L (A)$. By Lemma \ref{wichtig-0}, it suffices to show that $k_5+\ell_1 = \max \mathsf L (U_5^{k_5}V_1^{\ell_1})$. Since $U_5^{k_5}V_1^{\ell_1}$ is not divisible by an atom of length two, it follows that $\max \mathsf L (U_5^{k_5}V_1^{\ell_1}) \le |U_5^{k_5}V_1^{\ell_1}|/3 = k_5+\ell_1$.

Suppose that $V_1 \nmid z$. If $\mathsf v_{V_2} (z) = 0$, then $|z| = \max \mathsf L (A)$ because all remaining atoms have $g$-norm one. From now on we suppose that $V_2 \mid z$.
This implies that  $U_5 \nmid z$. Since $\mathsf v_{2g} (A) \ge 3$, then $U_5 \mid U_2^{\mathsf v_{U_2}(z)}U_3^{\mathsf v_{U_3}(z)}$ and we obtain a factorization $z' \in \mathsf Z (A)$ with $|z'|=|z|$ and with $U_5 \mid z'$ and we still have that $V_2 \mid z'$. Since $V_2U_5 = V_1^3$, it follows that $|z'|+1 \in \mathsf L (A)$.
\end{proof}

\begin{lemma} \label{wichtig-4}
Let $G$ be a cyclic group of order $|G|=6$, $g \in G$ with $\ord (g)=6$, and $A \in \mathcal B (G)$ with $\supp (A) = \{g, 2g,4g,  -g\}$. If $\mathsf v_{2g}(A) + \mathsf v_{4g}(A) \ge 3$, then  $\mathsf L (A)$ is an {\rm AP} with difference $1$ and otherwise it is an {\rm AMP} with period $\{0,1,2,4\}$ or $\{0,1,3,4\}$ or $\{0,2,3,4\}$.
\end{lemma}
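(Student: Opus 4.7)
The plan is to mirror the structure of Lemmas \ref{wichtig-1}, \ref{wichtig-2A}, \ref{wichtig-2}, and \ref{wichtig-3}, adapted to $G_0 = \{g, 2g, 4g, -g\}$. First I would enumerate the atoms of $\mathcal B(G_0)$ grouped by $g$-norm: the $g$-norm $1$ atoms $g^6$, $g^4(2g)$, $g^2(2g)^2$, $(2g)^3$, $g^2(4g)$, $(2g)(4g)$, $(4g)^3$, $g(-g)$; the $g$-norm $2$ atoms $(-g)^2(2g)$ and $(-g)^2(4g)^2$; the $g$-norm $3$ atom $(-g)^4(4g)$; and the $g$-norm $5$ atom $(-g)^6$. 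The proof then splits along the dichotomy in the statement.

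\smallskip
\noindent
\textbf{Case 1} ($\mathsf v_{2g}(A) = \mathsf v_{4g}(A) = 1$). Since $2g, 4g \in \supp(A)$, this is the only way to have $\mathsf v_{2g}(A) + \mathsf v_{4g}(A) \le 2$. Writing $A = (2g)(4g) g^a(-g)^b$ with $a \equiv b \pmod 6$, every factorization of $A$ uses one atom absorbing the $(2g)$ and one absorbing the $(4g)$ (possibly the same atom $(2g)(4g)$), then factors the remaining $g^c(-g)^d$. The five possibilities are:
(a) $(2g)(4g)$ alone, with remainder $g^a(-g)^b$;
(b) $(-g)^2(2g) \cdot g^2(4g)$, with remainder $g^{a-2}(-g)^{b-2}$ (requires $a,b \ge 2$);
(c) $g^4(2g) \cdot g^2(4g)$, with remainder $g^{a-6}(-g)^{b}$ (requires $a \ge 6$);
(d) $(-g)^2(2g) \cdot (-g)^4(4g)$, with remainder $g^{a}(-g)^{b-6}$ (requires $b \ge 6$);
(e) $g^4(2g) \cdot (-g)^4(4g)$, with remainder $g^{a-4}(-g)^{b-4}$ (requires $a,b \ge 4$).
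By \eqref{basic}, the set of lengths of each remainder is an arithmetic progression with difference $4$, so $\mathsf L(A)$ is the (shifted) union of up to five such APs. A routine case analysis on $r := (a \bmod 6) = (b \bmod 6) \in [0,5]$, tracking the minima and lengths of these APs in the style of Case 2 of Lemma \ref{wichtig-3}, shows that $\mathsf L(A)$ is an AMP with difference $4$ and period contained in one of $\{0,1,2,4\}$, $\{0,1,3,4\}$, $\{0,2,3,4\}$ (the relevant period depending on $r$ and the sizes of $a,b$).

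\smallskip
\noindent
\textbf{Case 2} ($\mathsf v_{2g}(A) + \mathsf v_{4g}(A) \ge 3$). To show $\mathsf L(A)$ is an AP with difference $1$, I would follow the template of Lemmas \ref{wichtig-1}, \ref{wichtig-2A}, \ref{wichtig-2}: for any $z \in \mathsf Z(A)$, prove either $|z|+1 \in \mathsf L(A)$ or $|z| = \max \mathsf L(A)$. The key length-$+1$ swap identities available here are
\[
(-g)^2(2g) \cdot g^6 = g^4(2g) \cdot \bigl(g(-g)\bigr)^2, \qquad (2g)^3 \cdot (4g)^3 = \bigl((2g)(4g)\bigr)^3,
\]
\[
(-g)^2(4g)^2 \cdot g^4(2g) = \bigl(g^2(4g)\bigr)^2 \cdot (-g)^2(2g), \qquad (-g)^4(4g) \cdot (2g)^3 = \bigl((-g)^2(2g)\bigr)^2 \cdot (2g)(4g),
\]
\[
(-g)^6 \cdot g^2(4g) = (-g)^4(4g) \cdot \bigl(g(-g)\bigr)^2, \qquad (-g)^6 \cdot (2g)^3 = \bigl((-g)^2(2g)\bigr)^3,
\]
each gaining exactly one atom. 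The argument then splits by which of the four $g$-norm-$\ge 2$ atoms $(-g)^2(2g)$, $(-g)^2(4g)^2$, $(-g)^4(4g)$, $(-g)^6$ appears in $z$. When two such atoms appear, or when one such atom together with an extra $g^6$, $g^2(4g)$, $g^4(2g)$, $(2g)^3$, or $(4g)^3$ block divides $z$, the hypothesis $\mathsf v_{2g}(A) + \mathsf v_{4g}(A) \ge 3$ ensures that one of the above identities (or a composition absorbing surplus $g(-g)$ pairs) applies, producing a factorization of length $|z|+1$. Otherwise $z$ contains at most one $g$-norm-$\ge 2$ atom and no swap partner, and a norm/length estimate in the spirit of Lemma \ref{wichtig-0} forces $|z| = \max \mathsf L(A)$.

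The main obstacle is the subcase bookkeeping in Case 2: many configurations of $g$-norm-$\ge 2$ atoms must be matched with a correct swap identity, and for each one has to verify that the surplus of $(2g)$s or $(4g)$s predicted by $\mathsf v_{2g}(A) + \mathsf v_{4g}(A) \ge 3$ is actually available in $z$ rather than locked inside other norm-$\ge 2$ atoms. The example $A' = g^4(2g)(-g)^4(4g)$, for which $\mathsf L(A') = \{2,4,5\}$ is not an AP with difference $1$, confirms that the hypothesis is sharp. Case 1 is longer but consists of routine arithmetic modulo $6$.
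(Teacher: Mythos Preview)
Your overall plan matches the paper's: Case~1 ($\mathsf v_{2g}(A)=\mathsf v_{4g}(A)=1$) is handled exactly as the paper does, by listing the five ways to absorb the single $2g$ and the single $4g$ and writing $\mathsf L(A)$ as a union of five shifted APs with difference~$4$; Case~2 proceeds, as in the paper, by showing that every $z\in\mathsf Z(A)$ satisfies $|z|+1\in\mathsf L(A)$ or $|z|=\max\mathsf L(A)$.

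There is, however, a genuine gap in your Case~2, caused by miscomputed $g$-norms. One has $\|(4g)^3\|_g=12/6=2$ (not~$1$), $\|(-g)^2(4g)^2\|_g=18/6=3$ (not~$2$), and $\|(-g)^4(4g)\|_g=24/6=4$ (not~$3$). The first error is the damaging one: there are \emph{five} atoms of $g$-norm $\ge 2$, not four, and your case split ``by which of the four $g$-norm-$\ge 2$ atoms \ldots\ appears in $z$'' omits $(4g)^3$. Consequently your fall-through clause is false as stated. For instance, if $z=(4g)^3\cdot g^6\cdot y$ with $y$ a product of (genuinely) norm-$1$ atoms, your argument would declare $|z|=\max\mathsf L(A)$; but $(4g)^3\cdot g^6=\bigl(g^2(4g)\bigr)^3$ gives $|z|+1\in\mathsf L(A)$. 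You therefore need additional swap identities involving $(4g)^3$ (such as the one just given), and the terminal ``at most one norm-$\ge 2$ atom, no swap partner'' case must be re-examined with the corrected norm list.

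For comparison, the paper avoids this bookkeeping by exploiting the symmetry $g\leftrightarrow -g$ (which exchanges $2g\leftrightarrow 4g$): it writes $z=z^+z^0z^-$ with $z^+$ the product of atoms over $\{g,2g\}$, $z^-$ the product of atoms over $\{-g,4g\}$, and $z^0$ the rest (atoms $V_1=g(-g)$, $V_2=(2g)(4g)$, $W=g^2(4g)$, $-W=(-g)^2(2g)$). This makes $(4g)^3$ and $(2g)^3$ play symmetric roles and cuts the number of subcases roughly in half. Your swap-identity route can be completed once the atom list is fixed, but expect the casework to be noticeably longer than the paper's.
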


\begin{proof}
We set $G_0 = \{g, 2g, 4g, -g\}$ and list all atoms of $\mathcal B (G_0)$:
\begin{itemize}
\item[] $\| \cdot  \|_g = 1:$ $U_0=g^6$, $U_1=g^4 (2g)$,  $U_2=g^2(2g)^2$,   $U_3=(2g)^3$, $V_2=(2g)(4g)$, \\ \phantom{$\| \cdot  \|_g = 1:$} $W= (4g)g^2$, $V_1=g(-g)$,

\item[] $\| \cdot  \|_g = 2:$  $-U_3=(4g)^3$, $-W=(2g)(-g)^2$,

\item[] $\| \cdot  \|_g = 3:$ $-U_2=(4g)^2(-g)^2$,

\item[] $\| \cdot  \|_g = 4:$ $-U_1=(-g)^4(4g)$,

\item[] $\| \cdot  \|_g = 5:$ $-U_0=(-g)^6$
\end{itemize}

First, suppose that $\mathsf v_{2g}(A) + \mathsf v_{4g}(A) \ge 3$. We choose a factorization $z \in \mathsf Z (A)$ and we show that either $|z|= \max \mathsf L (A)$ or that $|z|+1 \in \mathsf L (A)$. We write $z$ in the form
\[
z= z^+z^0z^- \,,
\]
where $z^+$ is the  product of all atoms from $\mathcal B (\{g, 2g\})$ (these are $U_0, U_1,U_2,U_3$), $z^-$ is the product of all atoms from $\mathcal B ( \{-g, 4g\})$, and $z^0$ is the product of all remaining atoms. Note that the sets in $\mathcal L(\{g, 2g\})$ and $\mathcal L (\{-g, 4g\})$ are singletons. If $U_3 \mid z^+$ and $z^- \ne 1$, then $|z|+1 \in \mathsf L (A)$. Similarly, if $-U_3 \mid z^-$ and $z^+ \ne 1$, then $|z|+1 \in \mathsf L (A)$.

Suppose that $z^-=1$. If $-W \nmid z$, then all atoms dividing $z$ have $g$-norm equal to $1$ whence $|z| = \max \mathsf L (A)$. Suppose that $-W \mid z$. If $(-W)Y \mid z$ with $Y \in \{U_0, U_1, U_2, W\}$, then $|z|+1 \in \mathsf L (A)$. Otherwise, $z$ is a product of the atoms $-W, U_3, V_2$, and $V_1$ which implies that $|z|= \max \mathsf L (A)$. The case $z^+=1$ follows by symmetry.

Thus we may suppose that $z^+ \ne 1$ and $z^+\ne 1$ and that $U_3 \nmid z^+$ and that $(-U_3) \nmid z^-$. Let $A^+$ resp. $A^-$ denote the zero-sum sequences corresponding to $z^+$ resp. $z^-$. If $\mathsf v_{2g}( A^+) \ge 3$, then there is a factorization $z'$ of $A^+$ with $|z'|=|z^+|$ and with $U_3 \mid z'$ and we are back to an earlier case. Thus we may suppose that $\mathsf v_{2g} ( A^+) \le 2$. By symmetry we may also suppose that $\mathsf v_{4g}(A^-) \le 2$.

Since $(-U_1)^2 = (-U_0)(-U_2)$, we may suppose that $\mathsf v_{-U_1}(z^-) \le 1$. By symmetry we infer that $\mathsf v_{U_1} (z^+) \le 1$. Now we distinguish two cases.

\noindent
CASE 1: \ $U_2 \mid z^+$ or $(-U_2) \mid z^-$.

By symmetry we may suppose that $U_2 \mid z^+$. If $(-U_2) \mid z^-$ or $(-U_1) \mid z^-$, then $|z|+1 \in \mathsf L (A)$. Otherwise $z^-$ is  a product of $(-U_0)$. Since $\mathsf v_{4g}(A) \ge 1$, it follows that $V_2 \mid z$ or $W \mid z$. Since $(-U_0)V_2= (-W)(-U_1)$, we are back to a previous case. Since $(-U_0)W = V_1^2 (-U_1)$, it follows that $|z|+1 \in \mathsf L (A)$.

\noindent
CASE 2: \ $U_2 \nmid z^+$ and $(-U_2) \nmid z^-$.

If $(-W) \mid z_0$, then $|z|+1 \in \mathsf L (A)$ because $z^+$ is a nonempty product of $U_0$ and $U_1$. If $W \mid z_0$, then $|z|+1 \in \mathsf L (A)$ because $z^-$ is a nonempty product of $-U_0$ and $-U_1$. From now on we suppose that $W \nmid z_0$ and $-W \nmid z_0$. Since $\mathsf v_{2g}(A) + \mathsf v_{4g}(A) \ge 3$, it follows that $V_2 \mid z_0$. If $U_0 \mid z$, then $U_0V_2= W U_1$ leads back to the case just handled. If $U_1 \mid z$, then $U_1V_2 = U_2W$ and we are back to CASE 1.

\smallskip
Now suppose that $\mathsf v_{2g}(A)=\mathsf v_{4g} (A)=1$. Then $\mathsf v_g (A) \equiv \mathsf v_{-g} (A) \mod 6$, and we set $\mathsf v_g (A) = v=6m+r$ and $\mathsf v_{-g} (A) = w = 6n+r$ with $v,w \in \N, r \in [0,5]$, and $m,n \in \N_0$. Then
\[
\begin{aligned}
A & = V_2 g^v (-g)^w \\
  & = U_1W g^{v-6}(-g)^w \\
  & = (-U_1)(-W) g^v (-g)^{w-6} \\
  & = W (-W) g^{v-2}(-g)^{w-2} \\
  & = U_1 (-U_1) g^{v-4}(-g)^{w-4} \,,
\end{aligned}
\]
where negative exponents are interpreted in the quotient group of $\mathcal F (G)$.
Since each factorization of $A$ is divisible by exactly one of the following
\[
V_2, U_1W, (-U_1)(-W), W(-W), U_1(-U_1) \,,
\]
we infer that $\mathsf L (A) = L_1 \cup L_2 \cup L_3 \cup L_4 \cup L_5$, where
\[
\begin{aligned}
L_1 & = 1 + r + (m+n) + 4 \cdot [0, \min \{m,n\}] \\
L_2 & = 2 + r + (m+n-1) + 4 \cdot [0, \min \{m-1,n\}] \\
L_3 & = 2 + r + (m+n-1) + 4 \cdot [0, \min \{m,n-1\}] \\
L_4 & = \begin{cases}
        2 + (r-2) + (m+n) + 4 \cdot [0, \min \{m,n\}] & r \in [2,5] \\
        2 + (r-2+6) + (m+n-2) + 4 \cdot [0, \min \{m-1,n-1\}] & r \in [0,1]
        \end{cases} \\
L_5 & = \begin{cases}
        2 + (r-4) + (m+n) + 4 \cdot [0, \min \{m,n\}] & r \in [4,5] \\
        2 + (r-4+6) + (m+n-2) + 4 \cdot [0, \min \{m-1,n-1\}] & r \in [0,3]
        \end{cases} \\
\end{aligned}
\]
Note, if exponents in the above equations are negative, then the associated sets $L_i$ are indeed empty. More precisely, if
\begin{itemize}
\item $v < 6$, then $m=0$, $[0, \min \{m-1,n\}]=\emptyset$, and hence $L_2=\emptyset$;
\item $w < 6$, then $n=0$ and $L_3=\emptyset$;
\item $r \in [0,1]$ and $\min \{m,n\}=0$, then $L_4=\emptyset$;
\item $r \in [0,3]$ and $\min \{m,n\}=0$, then $L_5=\emptyset$.
\end{itemize}
We observe that $L_2 \subset L_1$ and $L_3 \subset L_1$.

If $r \in [4,5]$, then $\min L_5 = r-2+m+n$, $\min L_4 = 2+\min L_5$, and $\min L_1=3+\min L_5$; since $L_5$, $L_4$, and $L_1$ are APs with difference $4$ and length $\min \{m,n\}$, $\mathsf L (A)$ is an AMP with period $\{0,2,3,4\}$.

If $r \in [2,3]$, then $\min L_4 = r+m+n$, $\min L_1=1+\min L_4$, and $\min L_5=2+\min L_4$; since $L_4$, $L_1$, and $L_5$ are APs with difference $4$, $L_4$ and $L_1$ have length $\min \{m,n\}$, and $L_5$ has length $\min \{m,n\} -1$, $\mathsf L (A)$ is an AMP with period $\{0,1,2,4\}$.

If $r \in [0,1]$, then $\min L_1 = 1+r+m+n$, $\min L_5=1+\min L_1$, and $\min L_4=3+\min L_1$; since $L_1$, $L_5$, and $L_4$ are APs with difference $4$, $L_4$ and $L_5$ have length $\min \{m,n\}-1$, and $L_1$ has length $\min \{m,n\}$, $\mathsf L (A)$ is an AMP with period $\{0,1,3,4\}$.
\end{proof}

\begin{proposition} \label{3.3}
Let $G$ be a cyclic group  of order $|G|=  6$. Then every $L \in \mathcal L (G)$ has one of the following forms{\rm \,:}
\begin{itemize}
\item $L$ is an {\rm AP} with difference in $\{1,2,4\}$.

\item $L$ is an {\rm AMP} with one of the following periods: $\{0,1,4\}$, $\{0,3,4\}$, $\{0,1,2,4\}$, $\{0,1,3,4\}$ $\{0,2,3,4\}$.
\end{itemize}
Each of the mentioned forms is actually attained by arbitrarily large sets of lengths.
\end{proposition}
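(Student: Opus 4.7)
The plan is to reduce to a case analysis on $\supp(A) \subset G^\bullet := G \setminus \{0\}$ for $A \in \mathcal B (G)$; one may assume $0 \notin \supp(A)$ since including a factor $0^k$ only shifts $\mathsf L (A)$ by $k$. Whenever $\supp(A) \cup \{0\}$ is a subgroup of $G$---which happens exactly for $\supp(A) \in \{\emptyset, \{3g\}, \{2g, 4g\}, G^\bullet\}$---Proposition \ref{2.2}.4 gives that $\mathsf L (A)$ is an AP with difference $1$. Since $\mathsf L (A) = \mathsf L (-A)$, I would further reduce the remaining supports modulo the automorphism $x \mapsto -x$, which swaps $g \leftrightarrow -g$ and $2g \leftrightarrow 4g$ and fixes $3g$.

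Under this symmetry, Lemmas \ref{wichtig-1}--\ref{wichtig-4} already cover nearly every remaining support: Lemma \ref{wichtig-1} gives $\{g,2g,3g,-g\}$ and $\{g,3g,4g,-g\}$; Lemma \ref{wichtig-2A} gives $\{g,3g,4g\}$ and $\{2g,3g,-g\}$; Lemma \ref{wichtig-2} gives $\{g,2g,3g,4g\}$ and $\{2g,3g,4g,-g\}$; Lemma \ref{wichtig-3} gives $\{g,2g,-g\}$ and $\{g,4g,-g\}$; and Lemma \ref{wichtig-4} gives $\{g,2g,4g,-g\}$. The supports still to be treated are the singletons (for which $\mathsf L$ is a singleton) together with (up to $-1$) the pairs $\{g,-g\}, \{g,2g\}, \{g,3g\}, \{g,4g\}, \{2g,3g\}, \{2g,-g\}$ and the triples $\{g,2g,3g\}, \{g,2g,4g\}, \{2g,3g,4g\}, \{g,3g,-g\}$. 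For $\{g,-g\}$, formula \eqref{basic} with $n = 6$ directly yields $\mathsf L (g^{6k+r}(-g)^{6\ell+r}) = r + k + \ell + 4 \cdot [0, \min\{k, \ell\}]$, an AP with difference $4$. For each of the other pairs and for the three triples $\{g,2g,3g\}, \{g,2g,4g\}, \{2g,3g,4g\}$, enumerating the atoms shows that either all atoms have $g$-norm $1$ (forcing $\mathsf L$ to be a singleton) or exactly one atom has $g$-norm $2$; the $g$-norm argument used in Lemmas \ref{wichtig-2A} and \ref{wichtig-2} then gives an AP with difference $1$.

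The hard part is the support $\{g, 3g, -g\}$, where atoms of $g$-norm $1$ (namely $g^6, g^3(3g), (3g)^2, g(-g)$), of $g$-norm $3$ (namely $(-g)^3(3g)$), and of $g$-norm $5$ (namely $(-g)^6$) all occur simultaneously. For $A = g^a(3g)^b(-g)^c$, writing $n_5$ and $n_6$ for the multiplicities of $(-g)^6$ and $(-g)^3(3g)$ in a factorization, the norm identity forces every factorization to have length $l = \|A\|_g - 4n_5 - 2n_6$, so that $\mathsf L (A) = \|A\|_g - \{4n_5 + 2n_6 \mid (n_5, n_6) \text{ feasible}\}$. The crucial step is to show that this feasible set of values is always a full interval of even non-negative integers, so that $\mathsf L (A)$ is an AP with difference $2$ (or a singleton when only one pair is feasible). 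I would establish this by careful bookkeeping of the divisibility constraints on the remaining atom multiplicities, using the identity $g^3(3g) \cdot (-g)^3(3g) = (3g)^2 \cdot (g(-g))^3$ to exhibit a gap of exactly $2$, in a manner parallel to CASE 1 of Lemma \ref{wichtig-3}.

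For the realizability part, each listed form is realized by an explicit family of arbitrarily long sets of lengths. AP difference $1$ is realized by any $A$ with $\supp(A) = G^\bullet$ and growing multiplicities (Proposition \ref{2.2}.4). AP difference $4$ is realized by $A_k = g^{6k}(-g)^{6k}$, for which \eqref{basic} gives $\mathsf L (A_k) = 2k + 4 \cdot [0, k]$. AP difference $2$ is realized by $A_k = g^{6k}(3g)^2(-g)^{6k}$, for which the analysis above gives $\mathsf L (A_k) = \{2k+1, 2k+3, \ldots, 6k+1\}$. The five AMP periods arise from the families appearing in the proofs of Lemmas \ref{wichtig-3} and \ref{wichtig-4}: the periods $\{0,3,4\}$ and $\{0,1,4\}$ come from $A = (2g)g^{v+4}(-g)^{w+2}$ in CASE 1 of Lemma \ref{wichtig-3} as $v \bmod 6$ varies, and the periods $\{0,1,2,4\}, \{0,1,3,4\}, \{0,2,3,4\}$ come from $A = (2g)^2 g^{v+8}(-g)^{w+4}$ in CASE 2 of the same lemma as $v \bmod 6$ varies. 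Taking $v$ and $w$ large in each case produces AMPs of arbitrarily large length.
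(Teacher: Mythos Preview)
Your case analysis and use of Lemmas \ref{wichtig-1}--\ref{wichtig-4} match the paper's proof essentially line for line; the only substantive divergence is how you handle $\supp(A)=\{g,3g,-g\}$. The paper disposes of this case by citing outside results: $\min\Delta(G_0)=2$ from \cite[Lemma 6.8.5]{Ge-HK06a} gives $\Delta(G_0)=\{2,4\}$, and then \cite[Theorem 3.2]{Ch-Go-Pe14a} says that whenever $4\in\Delta(\mathsf L(A))$ one has $\Delta(\mathsf L(A))=\{4\}$, so $\mathsf L(A)$ is an AP with difference $2$ or $4$. Your direct argument via $\|\cdot\|_g$ is more hands-on and in fact sharper: writing $A=g^a(3g)^b(-g)^c$ and parametrising factorizations by $(n_5,n_6)$, the constraints reduce to $\max(0,c-a)\le 3(2n_5+n_6)\le c$ and $0\le n_6\le b$, $n_6\equiv 2n_5+n_6\pmod 2$; since $b\ge 1$, every integer $m$ in $[\lceil\max(0,c-a)/3\rceil,\lfloor c/3\rfloor]$ admits a valid $n_6$, so $\mathsf L(A)$ is genuinely an AP with difference $2$ (difference $4$ only arises from the proper subset $\{g,-g\}$). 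This is a nice self-contained replacement for the two citations.

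One point to tighten: your blanket claim that ``exactly one atom has $g$-norm $2$ $\Rightarrow$ AP with difference $1$'' is not something Lemmas \ref{wichtig-2A} or \ref{wichtig-2} actually prove as a principle. In the paper these small supports are handled by ad hoc observations (e.g.\ for $\{g,2g,4g\}$ the multiplicity of $g$ is forced to be even, reducing to $\mathcal L(\{2g,4g\})$; for $\{2g,3g,4g\}$ the atom $(3g)^2$ is prime and one reduces to the order-$3$ subgroup $\{0,2g,4g\}$). Each case is indeed a one-line check, but you should state the actual reason in each instance rather than appeal to a nonexistent general lemma. Also, $\{2g,-g\}$ is in the $(-1)$-orbit of $\{g,4g\}$, so it is redundant in your list of remaining pairs.
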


\begin{proof}
Let $A' \in \mathcal B (G)$. If $A' = 0^m A$ with $m \in \N_0$ and $A \in \mathcal B (G \setminus \{0\})$, then $\mathsf L (A') = m + \mathsf L (A)$. Thus it is sufficient to prove the assertion for $\mathsf L (A)$.
Let $g \in G$ with $\ord (g)=6$.

If $\supp (A)=G \setminus \{0\}$, then $\mathsf L (A)$ is an AP with difference $1$ by Proposition \ref{2.2}.4. Suppose that $\supp (A) \subset G \setminus \{g, -g\} = \{2g, 3g, 4g\} = G_0$. Since $(3g)^2$ is a prime element in $\mathcal B (G_0)$ and $\{0, 2g, 4g\}$ is a cyclic group of order three, it follows that $\mathsf L (A)$ is an AP with difference $1$ by Proposition \ref{3.1}. Thus from now on we may suppose that $g \in \supp (A)$. We distinguish two cases.

\smallskip
\noindent
CASE 1: \  $-g \notin \supp (A)$.

Then $\supp (A) \subset \{g,2g,3g,4g\}$ and we assert that $\mathsf L (A)$ is an AP with difference $1$. Since all atoms in $\mathcal B (\{g, 2g, 3g\}$ have $g$-norm one, the sets in $\mathcal L (\{g, 2g, 3g\})$ are singletons. Thus the claim holds if $\supp (A) \subset \{g, 2g, 3g\}$.  If $\supp (A) = \{g,2g,3g,4g\}$, then the claim follows from  Lemma \ref{wichtig-2}. Thus it remains to consider the cases where $\supp (A)$ equals one of the following three sets:
\[
\{g, 4g\}, \ \{g,2g,4g\} , \quad \text{or} \quad \{g,3g,4g\} \,.
\]
The multiplicity of $g$ in any zero-sum sequence $B \in \mathcal B ( \{g,2g,4g\}$ is even, whence $\mathcal L ( \{g, 2g, 4g\}) = \mathcal L ( \{2g, 4g\})$. Since $\{0, 2g, 4g\}$ is a cyclic group of order three, all sets of lengths in $\mathcal L ( \{0, 2g, 4g\}$ are APs  with difference $1$ by Proposition \ref{3.1}. If $\supp (A) = \{g, 3g, 4g\}$, then the claim follows from Lemma \ref{wichtig-2A}.

\smallskip
\noindent
CASE 2: \  $-g \in \supp (A)$.

Then $\{g, -g \} \subset \supp (A)$ and we distinguish four cases.

\smallskip
\noindent
CASE 2.1: \ $\supp (A) \subset G_0=\{g, 3g, -g\}$.

Since $\min \Delta (G_0) = 2$ by \cite[Lemma 6.8.5]{Ge-HK06a} and $\max \Delta (G_0)=4$, it follows that $\Delta (G_0)=\{2,4\}$. If $4 \in \Delta (\mathsf L (A))$, then $\mathsf L (A)$ is an AP with difference $4$ by \cite[Theorem 3.2]{Ch-Go-Pe14a}, and otherwise $\mathsf L (A)$ is an AP with difference $2$.

\smallskip
\noindent
CASE 2.2: \ $\supp (A) = \{g,  2g, -g\}$ or $\supp (A) = \{g, 4g, -g\}$.

By symmetry it suffices to consider the first case. Lemma \ref{wichtig-3} shows that $\mathsf L (A)$ has one of the given forms.

\smallskip
\noindent
CASE 2.3: \ $\supp (A) = \{g,  2g, 3g, -g\}$ or $\supp (A) = \{g,  3g, 4g, -g \}$.

By symmetry it suffices to consider the first case.  Lemma \ref{wichtig-1} shows that $\mathsf L (A)$ is an AP with difference $1$.

\smallskip
\noindent
CASE 2.4: \ $\supp (A) = \{g, 2g, 4g, -g \}$.

Lemma \ref{wichtig-4} shows that $\mathsf L (A)$ has one of the given forms.

\noindent
Finally, we note that the preceding lemmas imply that each of the mentioned forms is actually attained by arbitrarily large sets of lengths.
\end{proof}

The next proposition is used substantially in \cite[Subsection 4.1]{Ge-Sc-Zh17b}, where the system $\mathcal L (C_5)$ is written down explicitly.

\begin{proposition} \label{3.4}
Let $G$ be a cyclic group  of order $|G|=  5$. Then every $L \in \mathcal L (G)$ has one of the following forms{\rm \,:}
\begin{itemize}
\item $L$ is an {\rm AP} with difference in $\{1,3\}$.

\item $L$ is an {\rm AMP} with period $\{0,2,3\}$ or with period $\{0, 1, 3\}$.
\end{itemize}
Each of the mentioned forms is actually attained by arbitrarily large sets of lengths.
\end{proposition}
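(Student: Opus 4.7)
The plan is to reduce $A \in \mathcal{B}(C_5)$ to the case $0 \notin \supp(A)$ via $\mathsf{L}(0^m A) = m + \mathsf{L}(A)$, and to classify nonzero supports up to the transitive action of $\mathrm{Aut}(C_5) \cong C_4$ on $C_5 \setminus \{0\}$. This yields five types: singletons, inverse pairs $\{g,-g\}$, non-inverse pairs $\{g,2g\}$, triples (all conjugate to $\{g,2g,3g\}$), and the full set $C_5 \setminus \{0\}$. Three cases are immediate: a singleton forces $A = (kg)^{5m}$ with $\mathsf{L}(A)=\{m\}$; full support gives an {\rm AP} of difference $1$ by Proposition~\ref{2.2}.4; and for an inverse pair, writing $A = g^{5k+r}(-g)^{5\ell+r}$ and applying the formula \eqref{basic} yields $\mathsf{L}(A) = r+k+\ell + 3 \cdot [0,\min\{k,\ell\}]$, an {\rm AP} of difference $3$.

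For a non-inverse pair represented by $\{g,2g\}$ with atoms $g^5,(2g)^5,g^3(2g),g(2g)^2$, I would parameterize a factorization of $A = g^a(2g)^b$ by the multiplicities $(a_1,a_2,a_5,a_6)$ of these atoms. A direct manipulation of the defining equations gives the identity $|z| = (a+2b)/5 - a_2$, so the length depends only on the number of $(2g)^5$-atoms used. One then checks that the admissible values of $a_2$ form an integer interval (the trade $(g^3(2g))^5 = (g^5)^3 (2g)^5$ shifts $a_2$ by $1$ and the length by $-1$; the trade $(g(2g)^2)^5 = g^5 \cdot ((2g)^5)^2$ shifts $a_2$ by $2$ and the length by $-2$), so $\mathsf{L}(A)$ is an {\rm AP} of difference $1$ (or a singleton). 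For the triple case, use the isomorphism to $\{g,-g,3g\}$ and write $A = g^a(-g)^b(3g)^c$ with $a-b+3c \equiv 0 \pmod 5$. The atoms split into the pure $\{g,-g\}$-atoms $g(-g), g^5, (-g)^5$ and the $3g$-involving atoms $g^2(3g),(-g)(3g)^2,g(3g)^3,(3g)(-g)^3,(3g)^5$. I would partition $\mathsf{Z}(A)$ according to the multiset $T$ of $3g$-involving atoms used; after factoring $T$ out of $A$, the remainder is a pure $\{g,-g\}$-sequence whose set of lengths is an {\rm AP} of difference $3$ by the inverse-pair case, so each block contributes a translated {\rm AP} of difference $3$ to $\mathsf{L}(A)$. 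Reducing modulo $3$, the achievable residues form a subset of $\Z/3\Z$ of size one, two, or three, giving respectively an {\rm AP} of difference $3$, an {\rm AMP} with period $\{0,1,3\}$ or $\{0,2,3\}$ (depending on whether the two residues differ by $1$ or by $2$), or an {\rm AP} of difference $1$.

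Realization by arbitrarily large sets is obtained by explicit families: $A = g^{5k}(-g)^{5k}$ realizes an {\rm AP} of difference $3$; $A = g^{5k}(2g)^{5k}$ realizes an {\rm AP} of difference $1$; $A = g^{5k+2}(-g)^{5k}(3g)$ realizes an {\rm AMP} with period $\{0,1,3\}$ (the partition produces $\mathsf{L}(A) = ((2k+1) + 3 \cdot [0,k]) \cup ((2k+2) + 3 \cdot [0,k-1])$, with residues $2k+1$ and $2k+2 \pmod 3$); and $A = g^{5k}(-g)^{5k+3}(3g)$ realizes an {\rm AMP} with period $\{0,2,3\}$ by the analogous computation. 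The main obstacle is the triple case: for each congruence pattern of $(a,b,c)$ modulo $5$, one must enumerate the admissible multisets $T$ of $3g$-involving atoms, solve the resulting linear constraints, and verify both that the resulting residue set in $\Z/3\Z$ has the expected size and that the union of the translated {\rm AP}s of difference $3$ forms a contiguous structure matching one of the four claimed shapes. This bookkeeping, while elementary, is the technical heart of the argument.
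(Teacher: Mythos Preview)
Your treatment of singletons, the full support, inverse pairs, and non-inverse pairs is correct and parallels the paper's; your $g$-norm identity $|z| = \|A\|_g - \mathsf v_{(2g)^5}(z)$ for $\supp(A) = \{g,2g\}$ is in fact tidier than the paper's bare assertion.

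The gap is in the triple case. You propose to partition $\mathsf Z(A)$ by the multiset $T$ of $3g$-involving atoms and then to check that the resulting union of shifted difference-$3$ progressions has one of the four shapes. But the number of admissible multisets $T$ is governed by $c = \mathsf v_{3g}(A)$ itself, not by $c \bmod 5$: your five $3g$-involving atoms have $3g$-multiplicities $1,1,2,3,5$, and the number of nonnegative solutions to $t_1+t_4+2t_2+3t_3+5t_5 = c$ grows with $c$. So the step ``for each congruence pattern of $(a,b,c)$ modulo $5$, enumerate the admissible multisets $T$'' is not a finite case distinction as written. You would need either a uniform parametric argument establishing contiguity for all $c$, or a reduction to small $c$ --- and that reduction is exactly the content you are labelling ``bookkeeping''. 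Your remark that a two-element residue set in $\Z/3\Z$ yields period $\{0,1,3\}$ or $\{0,2,3\}$ ``depending on whether the two residues differ by $1$ or by $2$'' is also not a distinction (in $\Z/3\Z$ any two residues differ by both); what matters is the position of $\min L$ relative to the residues, and that comes out of the computation you have not done.

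The paper cuts through this with two ingredients you are missing. First, it invokes \cite[Theorem 3.2]{Ch-Go-Pe14a}: if $3 \in \Delta(\mathsf L(A))$ then $\Delta(\mathsf L(A)) = \{3\}$, so $\mathsf L(A)$ is an {\rm AP} with difference $3$. Second, working over $\{g,2g,-g\}$ in the remaining case $2 \in \Delta(\mathsf L(A)) \subset \{1,2\}$, the paper fixes factorizations of lengths $k$ and $k+2$ with $k+1 \notin \mathsf L(A)$ and runs a short case analysis on which atoms can occur among $A_1,\ldots,A_k$, using only the constraint $k+1 \notin \mathsf L(A)$ to eliminate possibilities. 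The outcome is $\mathsf v_{2g}(A) = 1$. This collapses your unbounded family of multisets $T$ to exactly two choices for the single $2g$-atom, namely $g^3(2g)$ or $(2g)(-g)^2$, after which the union of two shifted difference-$3$ progressions is computed explicitly and seen to be an {\rm AMP} with period $\{0,2,3\}$ or $\{0,1,3\}$. The reduction to $\mathsf v_{2g}(A)=1$ is the substantive step; without it (or an equivalent) your plan remains a program rather than a proof.
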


\begin{proof}
By Proposition \ref{2.3} we obtain that $\Delta^* (G) = \{1, 3\}$. Let $A' \in \mathcal B (G)$. If $A' = 0^m A$ with $m \in \N_0$ and $A \in \mathcal B (G \setminus \{0\})$, then $\mathsf L (A') = m + \mathsf L (A)$. Thus it is sufficient to prove the assertion for $\mathsf L (A)$. If $|\supp (A)|=1$, then $|\mathsf L (A)|=1$. If $|\supp (A)|=4$, then $\mathsf L (A)$ is an AP with difference $1$ by Proposition \ref{2.2}.4. Suppose that $|\supp (A)|=2$. Then there is a nonzero $g \in G$ such that $\supp (A) = \{ g, 2g\}$ or $\supp (A) = \{g, 4g\}$. If $\supp (A) = \{g, 2g\}$, then $\mathsf L (A)$ is an AP with difference $1$ (this can be checked directly by arguing with the $g$-norm). If $\supp (A) = \{g, 4g\}$, then $\mathsf L (A)$ is an AP with difference $3$.

Thus it remains to consider the case $|\supp (A)| = 3$. We set $G_0 = \supp (A)$. Then there is an element $g \in G_0$ such that $-g \in G_0$. Thus either $G_0 = \{g, 2g, -g\}$ or $G_0 = \{g, 3g, -g\}$. Since $\{g, 3g, -g\} = \{-g, 2(-g), -(-g)\}$, we may suppose without restriction that $G_0 = \{g, 2g, -g\}$.
We list all atoms of $\mathcal B (G_0)$:
\begin{itemize}
\item[] $\| \cdot  \|_g = 1:$ $g^5$, $g^3 (2g)$, $g(2g)^2$, $g(-g)$,

\item[] $\| \cdot  \|_g = 2:$ $(2g)^5$, $(2g)^3(-g)$, $(2g)(-g)^2$,

\item[] $\| \cdot  \|_g = 4:$ $(-g)^5$.
\end{itemize}

If $\Delta ( \mathsf L (A) ) \subset \{1\}$, then $\mathsf L (A)$ is an AP with difference $1$. If $3 \in \Delta ( \mathsf L (A) )$, then $\Delta ( \mathsf L (A) ) = \{3\}$ by \cite[Theorem 3.2]{Ch-Go-Pe14a}, which means that $\mathsf L (A)$ is an AP with difference $3$. Thus it remains to consider the case where $2 \in \Delta ( \mathsf L (A) ) \subset [1,2]$.
We show that $\mathsf L (A)$ is an {\rm AMP} with period $\{0,2,3\}$ or with period $\{0, 1, 3\}$. Since $2 \in \Delta ( \mathsf L (A) )$, there exist $k \in \N$, $A_1, \ldots, A_k, B_1 , \ldots, B_{k+2} \in \mathcal  A (G_0)$ such that
\[
A = A_1 \cdot \ldots \cdot A_k = B_1 \cdot \ldots \cdot B_{k+2} , \quad \text{and} \quad k+1 \notin \mathsf L (A) \,.
\]
We distinguish two cases.

\smallskip
\noindent
CASE 1: \ $(-g)^5 \notin \{A_1, \ldots, A_k\}$.

Then $\{A_1, \ldots, A_k\}$ must contain atoms with $g$-norm $2$. These are the atoms $(2g)^5, (2g)(-g)^2, (2g)^3(-g)$. If $g^5$ or $g^3(2g)$ occurs in $\{A_1, \ldots, A_k\}$, then $k+1 \in \mathsf L (A)$, a contradiction. Thus none of the elements $(-g)^5, g^5$, and $g^3(2g)$ lies in $\{A_1, \ldots, A_k\}$, and hence
\[
\{A_1, \ldots, A_k\} \subset \{ (2g)^5, (2g)^3(-g), g(2g)^2, (2g)(-g)^2, g(-g) \} \,.
\]
Now we set $h = 2g$ and obtain that
\[
\begin{aligned}
\{A_1, \ldots, A_k\} & \subset \{ (2g)^5, (2g)^3(-g), g(2g)^2, (2g)(-g)^2, g(-g) \} \\ & = \{ h^5, h^3(2h), h^2(3h), h(2h)^2, (2h)(3h)  \}\,.
\end{aligned}
\]
Since the $h$-norm of all these elements equals $1$, it follows that $\max \mathsf L (A) = k$, a contradiction.

\smallskip
\noindent
CASE 2: \ $(-g)^5 \in \{A_1, \ldots, A_k\}$.

If $(2g)^5$, or $g(2g)^2$, or $(2g)^3(-g)$ occurs in $\{A_1, \ldots, A_k\}$, then $k+1 \in \mathsf L (A)$, a contradiction. Since $\Delta ( \{-g, g\}) = \{3\}$, it follows that
\[
\Omega = \{A_1, \ldots, A_k\} \cap \{ g^3(2g), (2g)(-g)^2\} \ne \emptyset \,.
\]
Since $\Big( g^3(2g) \Big) \Big( (2g)(-g)^2 \Big) = \Big( (-g)g \Big)^2 \Big( g (2g)^2 \Big)$ and $k+1 \notin \mathsf L (A)$, it follows that $|\Omega| = 1$. We distinguish two cases.

\smallskip
\noindent
CASE 2.1: \ $\{A_1, \ldots, A_k\} \subset \{ g^5, (-g)^5, g(-g), (2g)(-g)^2 \}$.

We set $h = -g$, and observe that
\[
\{A_1, \ldots, A_k\} \subset \{ g^5, (-g)^5, g(-g), (2g)(-g)^2 \} = \{ h^5, (-h)^5, h(-h), h^2(3h) \} \,.
\]
Since $(-h)^5$ is the only element with $h$-norm greater than $1$, it follows that $(-h)^5 \in \{A_1, \ldots, A_k\}$. Since $\Delta ( \{h, -h\}) = \{3\}$, it follows that $h^2(3h) \in \{A_1, \ldots, A_k\}$. Since $\Big( (-h)^5 \Big) \Big( h^2(3h) \Big) = \Big( h(-h) \Big)^2 \Big( (3h)(-h)^3 \Big)$, we obtain that $k+1 \in \mathsf L (A)$, a contradiction.

\smallskip
\noindent
CASE 2.2: \ $\{A_1, \ldots, A_k\} \subset \{ g^5, (-g)^5, g(-g),  g^3(2g)  \} $.

Since $\Big( g^3 (2g) \Big)^2 \Big((-g)^5\Big) = \Big( g^5 \Big) \Big( g(-g) \Big) \Big( (2g)(-g)^2 \Big)^2$ and $k+1 \notin \mathsf L (A)$, it follows that
\[
| \{i \in [1,k] \mid A_i = g^3 (2g) \}| = 1 \,,
\]
and hence $\mathsf v_{2g} (A) = 1$. Thus every factorization $z$ of $A$ has the form
\[
z = \big( (2g)g^3\big) z_1 \quad \text{or } \quad z= \big( (2g)(-g)^2 \big) z_2 \,,
\]
where $z_1, z_2$ are factorizations of elements $B_1, B_2 \in \mathcal B ( \{-g,g\})$. Since $\mathsf L (B_1)$ and $\mathsf L (B_2)$ are APs of difference $3$, $\mathsf L (A)$ is a union of two shifted APs of difference $3$. We set
\[
A = \big(g^5 \big)^{m_1} \big( (-g)^5 \big) \big( (-g)g \big)^{m_3} \big( (2g)g^3 \big) \,,
\]
where $m_1 \in \N_0$, $m_2 \in \N$, and $m_3 \in [0,4]$. Suppose that $m_1 \ge 1$. Note that
\[
A'= \big(g^5\big) \big( (-g)^5\big) \big( (2g)g^3\big) = \big( (-g)g\big)^3 \big( (2g)(-g)^2\big) \big(g^5\big) = \big( (-g)g\big)^5 \big( (2g)g^3 \big) \,,
\]
and hence $\mathsf L (A')=\{3,5,6\}$. We set $A=A' A''$ with $A'' \in \mathcal B (\{g,-g\})$. The above argument on the structure of the factorizations of $A$ implies that $\mathsf L (A)$ is the sumset of $\mathsf L (A')$ and $\mathsf L (A'')$ whence
\[
\mathsf L (A)=\mathsf L (A')+\mathsf L (A'') = 3 + \{0,2,3\} + \mathsf L (A'') \,.
\]
Since $\mathsf L (A'')$ is an AP with difference $3$, $\mathsf L (A)$ is an AMP with period $\{0,2,3\}$. Suppose that $m_1=0$. If $m_3 \in [2,4]$, then $\mathsf L (A)= \{m_2+m_3,m_2+m_3+1,m_2+m_3+3\}$ is an AMP with period $\{0,1,3\}$.  If $m_3=1$, then $\mathsf L (A)= \{m_2+2,m_2+4\}$. If $m_3=0$, then $\mathsf L (A)=\{m_2+1, m_2+3\}$.
\end{proof}

\begin{proposition} \label{3.5}
Let $G$ be a finite abelian group, $e_1, e_2 \in G \setminus \{0\}$ with $\langle e_1 \rangle \cap \langle e_2 \rangle = \{0\}$, $\ord (e_2)=n \ge 4$, and $U = (e_1+e_2)(-e_1)e_2^{n-1}$. Then, for every $k \in \N$, we have
\[
\mathsf L \Big( U (-U) \big(e_2 (-e_2) \big)^{kn} \Big) = \{2,n,n+1\} + 2k + (n-2) \cdot [0,k] \,.
\]
\end{proposition}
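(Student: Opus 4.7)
My plan is to compute $\mathsf L(A)$ by identifying all atoms of $\mathcal B(G)$ dividing $A$ and classifying the factorizations of $A$ according to which such atoms they use. Set $V = e_2(-e_2)$, $V_+ = e_2^n$, $V_- = (-e_2)^n$, $W_1 = (e_1+e_2)(-e_1-e_2)$, $W_2 = e_1(-e_1)$, $X = (e_1+e_2)(-e_1)(-e_2)$, and $-X = (-e_1-e_2)(e_1)(e_2)$. Using $\langle e_1 \rangle \cap \langle e_2 \rangle = \{0\}$, a direct enumeration of zero-sum sequences on $\supp (A)$ shows that the atoms of $\mathcal B (G)$ dividing $A$ are exactly the nine sequences $U, -U, V, V_+, V_-, W_1, W_2, X, -X$. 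Two rewriting identities will be crucial, both obtained by direct comparison of sequences:
\[
X \cdot V_+ = U \cdot V \qquad \text{and} \qquad (-X) \cdot V_- = (-U) \cdot V \,.
\]

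For the inclusion $\supset$ I first aim to show $\mathsf L(U(-U)) = \{2, n, n+1\}$. Since $e_1+e_2$, $-e_1-e_2$, $-e_1$ and $e_1$ each appear with multiplicity $1$ in $U(-U)$, any factorization must distribute these four generators among the atoms $U, -U, W_1, W_2, X, -X$. Solving the four resulting multiplicity equations shows that only three patterns are feasible inside $U(-U)$ -- the mixed patterns $U \cdot (-X)$ and $(-U) \cdot X$ would demand a negative multiplicity in the residual $e_2/(-e_2)$ part -- yielding the factorizations $U \cdot (-U)$, $X \cdot (-X) \cdot V^{n-2}$, and $W_1 \cdot W_2 \cdot V^{n-1}$ of lengths $2$, $n$, and $n+1$. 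Combining this with $\mathsf L \bigl(V^{kn}\bigr) = 2k + (n-2) \cdot [0,k]$ from \eqref{basic} and concatenating factorizations gives $\{2, n, n+1\} + 2k + (n-2) \cdot [0,k] \subset \mathsf L(A)$.

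For the reverse inclusion $\subset$, the same multiplicity analysis applied to a generic factorization $z \in \mathsf Z(A)$ yields five feasible outer-atom patterns: the three above plus the two mixed patterns $U \cdot (-X)$ and $(-U) \cdot X$. In each mixed pattern the $e_2/(-e_2)$ residual forces $V_- \ge 1$ and $V_+ \ge 1$ respectively, and the rewriting identities above then convert $z$ into a same-length factorization based on the pattern $U \cdot (-U)$. In each of the three non-mixed cases the residual sequence has the form $e_2^{kn+s}(-e_2)^{kn+s}$ with $s \in \{0,\, n-2,\, n-1\}$, and \eqref{basic} produces a length set of shape $(s+2) + 2k + (n-2) \cdot [0,k]$, whose union over $s$ is exactly $\{2, n, n+1\} + 2k + (n-2) \cdot [0,k]$. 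The hard part will be the bookkeeping -- verifying that the nine listed sequences really exhaust the atoms dividing $A$ and that the multiplicity analysis yields exactly the five outer-atom patterns -- after which the two rewriting identities and \eqref{basic} close the argument cleanly.
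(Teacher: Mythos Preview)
Your proposal is correct and follows essentially the same route as the paper: both list the same nine atoms dividing $A_k$, identify the same five ``outer'' patterns determined by how the elements $e_1+e_2,\,-e_1-e_2,\,e_1,\,-e_1$ are distributed, and reduce the remaining $e_2/(-e_2)$ part to \eqref{basic}. The only cosmetic difference is that the paper applies \eqref{basic} directly to all five residual sequences (including the mixed cases $U(-V)$ and $(-U)V$ in its notation), whereas you first use the rewriting identities $X\cdot V_+ = U\cdot V$ and $(-X)\cdot V_- = (-U)\cdot V$ to absorb the two mixed cases into the $U(-U)$ case before invoking \eqref{basic}; either way the same union $\{2,n,n+1\}+2k+(n-2)\cdot[0,k]$ drops out.
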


\begin{proof}
Let $k \in \N$ and $A_k =  U (-U) \big(e_2 (-e_2) \big)^{kn} $. If $X \in \mathcal A (G)$ with $X \mid A_k$, then
\[
\begin{aligned}
X \in \{ & U, -U, V = (e_1+e_2)(-e_1)(-e_2), -V, W_0 = (e_1+e_2)(-e_1-e_2), \\
   &  W_1 = e_1(-e_1), W_2 = e_2(-e_2), e_2^n, (-e_2)^n \} \,.
\end{aligned}
\]
Thus
\[
\begin{aligned}
\mathsf Z (A_k)  = & W_0W_1 \mathsf Z (W_2^{kn+n-1}) \ \cup \\
 & U(-U) \mathsf Z (W_2^{kn}) \ \cup  \ V(-V)\mathsf Z (W_2^{kn+(n-2)} \ \cup \\
  & U(-V) \mathsf Z ( W_2^{kn-1} (-e_2)^n) \ \cup \ (-U)V \mathsf Z (W_2^{kn-1} e_2^n) \,,
\end{aligned}
\]
and we obtain the claim by using \eqref{basic}.
\end{proof}

\begin{proposition} \label{3.6}
Let $G = C_2^4$, $(e_1, e_2, e_3, e_4)$ be a basis of $G$, $e_0=e_1+\ldots+e_4$,  $U_4=e_0 \cdot \ldots \cdot e_4$, $U_3 = e_1e_2e_3(e_1+e_2+e_3)$, and $U_2= e_1e_2(e_1+e_2)$.
\begin{enumerate}
\item For each $k \in \N$, $\mathsf L \big( (U_3U_4)^{2k} \big) = \{4k\} \cup [4k+2, 9k]$.

\item For each $k \in \N$, $\mathsf L (U_4^{2k}U_2)= (2k+1)+\{0,1,3\} + 3 \cdot [0, k-1]$.
\end{enumerate}
\end{proposition}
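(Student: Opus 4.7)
The approach for both parts is the same: restrict attention to the support $G_0$ of the zero-sum sequence under consideration, enumerate the (few) atoms of $\mathcal B (G_0)$, and read off factorizations as non-negative integer solutions of a small linear system of multiplicity equations.

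For (1), set $f = e_1 + e_2 + e_3$ so that $G_0 := \supp ((U_3 U_4)^{2k}) = \{e_0, e_1, e_2, e_3, e_4, f\}$, with the one extra short relation $e_0 + f + e_4 = 0$. A direct case-check using $\mathsf D (C_2^4) = 5$ identifies $\mathcal A (G_0)$ as the six squares $g^2$ together with $V := e_0 f e_4$, $U_3$, and $U_4$. Letting $a_g, b, c, d$ count the occurrences of $g^2, V, U_3, U_4$ in a factorization $z$, the multiplicity equations at the six elements of $G_0$ determine every $a_g$ in terms of $b,c,d$ and force $b, c, d$ to share a common parity, yielding
\[
|z| \;=\; 9k - \frac{b + 2c + 3d}{2} \quad \text{subject to} \quad b+c \le 2k,\; b+d \le 2k,\; c+d \le 4k.
\]
The extremes $|z|=4k$ (via $U_3^{2k}U_4^{2k}$) and $|z|=9k$ (all squares) are immediate, and the elementary swaps $U_3^2 \leftrightarrow e_1^2 e_2^2 e_3^2 f^2$ (jump $+2$), $U_4^2 \leftrightarrow e_0^2 e_1^2 e_2^2 e_3^2 e_4^2$ (jump $+3$), and $V^2 \leftrightarrow e_0^2 f^2 e_4^2$ (jump $+1$) produce witnesses for every intermediate value in $[4k+2, 9k]$.

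The main obstacle is excluding $|z| = 4k+1$. Writing $b = 2\beta + \epsilon$, $c = 2\gamma + \epsilon$, $d = 2\delta + \epsilon$ with $\epsilon \in \{0,1\}$ reduces the claim to showing that $\beta + 2\gamma + 3\delta = 5k - 1 - 3\epsilon$ has no non-negative integer solution subject to $\gamma + \delta \le 2k - \epsilon$, $\beta + \delta \le k - \epsilon$, $\beta + \gamma \le k - \epsilon$. Keeping $\gamma$ within its bound forces $\beta + 3\delta \ge 3k - 1 - \epsilon$, which combined with $\beta + \delta \le k - \epsilon$ leaves only candidates with $\beta$ very small and $\delta$ essentially equal to $k - \epsilon$; in each remaining case either $\gamma$ fails to be an integer (a parity obstruction on $\beta + \delta$) or the upper bound on $\beta + \delta$ is violated. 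This is the only non-routine step of the argument.

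For (2), set $G_1 := \supp (U_4^{2k} U_2) = \{e_0, e_1, e_2, e_3, e_4, e_1+e_2\}$; an analogous case-check identifies $\mathcal A (G_1) = \{g^2 : g \in G_1\} \cup \{U_2,\; U_4' := e_0 e_3 e_4 (e_1+e_2),\; U_4\}$. Because $\mathsf v_{e_1+e_2}(U_4^{2k}U_2) = 1$, the multiplicity equation at $e_1+e_2$ forces $\mathsf v_{U_2}(z) + \mathsf v_{U_4'}(z) = 1$, splitting into two cases. In the case $\mathsf v_{U_2}(z) = 1$, a parity check at $e_1$ forces $\mathsf v_{U_4}(z) = 2j$ with $j \in [0, k]$ and yields $|z| = (2k+1) + 3j$; in the case $\mathsf v_{U_4'}(z) = 1$, it forces $\mathsf v_{U_4}(z) = 2j+1$ with $j \in [0, k-1]$ and yields $|z| = (2k+2) + 3j$. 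The union of these two arithmetic progressions is precisely $(2k+1) + \{0, 1, 3\} + 3 \cdot [0, k-1]$, as required, so part (2) is essentially bookkeeping once the atoms are listed.
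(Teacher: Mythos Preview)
Your proof is correct; the approach differs from the paper's in an instructive way.

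For part (1), the paper obtains most of the inclusion $\{4k\}\cup[4k+2,9k]\subset\mathsf L(A_k)$ in one stroke via the sumset
\[
\mathsf L(U_3^{2k})+\mathsf L(U_4^{2k}) \;=\; \bigl(2k+2\cdot[0,k]\bigr)+\bigl(2k+3\cdot[0,k]\bigr),
\]
and then picks up the missing value $9k-1$ by the single relation $V^2=e_0^2\,f^2\,e_4^2$.  For the exclusion of $4k+1$ the paper records the (very short) factorization sets $\mathsf Z(U_3^2)$, $\mathsf Z(U_4^2)$, $\mathsf Z(U_3U_4)$ and argues from there.  You instead enumerate $\mathcal A(G_0)$ completely, reduce every factorization to a triple $(b,c,d)$ of the same parity satisfying three linear inequalities, and obtain the closed formula $|z|=9k-(b+2c+3d)/2$; both the realisability of $[4k+2,9k]$ and the exclusion of $4k+1$ then become small integer-programming checks.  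Your route is more computational but entirely self-contained and arguably more transparent about \emph{why} $4k+1$ fails (the parity/size obstruction you isolate).  The paper's route is quicker once one already knows $\mathsf L(U_3^{2k})$ and $\mathsf L(U_4^{2k})$.

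For part (2), the paper computes $\mathsf L(U_4^2U_2)=\{3,4,6\}$ and then writes
\[
\mathsf L(U_4^{2k}U_2)\;=\;\bigl(\{1\}+\mathsf L(U_4^{2k})\bigr)\cup\bigl(\mathsf L(U_4^{2k-2})+\mathsf L(U_4^2U_2)\bigr),
\]
relying implicitly on the structure of factorizations.  Your observation that $\mathsf v_{e_1+e_2}=1$ forces exactly one of $U_2$ or $U_4'=e_0e_3e_4(e_1+e_2)$ to occur, together with the parity of $\mathsf v_{U_4}(z)$ read off at $e_1$, gives the same decomposition more explicitly and justifies immediately why the union above is an equality rather than merely an inclusion.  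Both arguments are short; yours makes the bookkeeping fully visible.
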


\begin{proof}
1.  Let $k \in \N$ and $A_k = (U_3U_4)^{2k}$. Then
\[
\begin{aligned}
\{4k\} \cup [4k+2, 9k-2] \cup \{9k\} & =  \big( 2k + 2 \cdot [0,k] \big) + \big( 2k + 3 \cdot [0,k] \big) = \\
\mathsf L (U_3^{2k}) + \mathsf L (U_4^{2k}) & \subset \mathsf L \big( (U_3U_4)^{2k} \big) \,.
\end{aligned}
\]
Since $\max \mathsf L (A_k) \le |A_k|/2 = 9k$, it follows that $\max \mathsf L (A_k)=9k$. We set $z = U_3^{2k}U_4^{2k} \in \mathsf Z (A_k)$. Since $U_4$ is the only atom of length $\mathsf D (G)=5$ dividing $A_k$ and since $2k=\mathsf v_{e_0}(A_k) = \mathsf v_{U_4} (z) = \max \{\mathsf v_{U_4}(z') \mid z' \in \mathsf Z (A_k)\}$, it follows that $\min \mathsf L (A_k)=|z|=4k$.

Next we assert that $4k+1 \notin \mathsf L (A_k)$. For $\nu \in [0,4]$, we set $V_{\nu} = e_{\nu}^2$ and $V_5 = (e_1+e_2+e_3)^2$. Since $\mathsf Z (U_3^2) = \{U_3^2, V_1V_2V_3V_5\}$, $\mathsf Z (U_4^2) = \{ U_4^2, V_1V_2V_3V_4V_0\}$, and $\mathsf Z (U_3U_4) = \{U_3U_4, V_1V_2V_3W\}$ where $W = (e_1+e_2+e_3) e_0 e_4$, it follows that $\min ( \mathsf L (A_k) \setminus \{4k\}) = 4k+2$.

Since $A_k$ has precisely one factorization $z$ of length $|z|=\max \mathsf L (A_k)=9k$, and $z$ is a product of  atoms of length two with $V_0V_4V_5 \mid z$. Since $V_0V_4V_5=W^2$, it follows that $A_k$ has a factorization of length $9k-1$.

\smallskip
2. Setting $W=(e_1+e_2)e_3e_4e_0$ we infer that $U_4^2U_2= U_4 (e_1^2)(e_2^2)W = U_2 (e_0^2) \cdot \ldots \cdot (e_4^2)$ and hence $\mathsf L (U_4^2U_2)=\{3,4,6\}$. Thus for each $k \in \N$ we obtain that
\[
\begin{aligned}
\mathsf L (U_4^{2k}U_2) & = \big( \{1\}+\mathsf L (U_4^{2k}) \big) \cup \big( \mathsf L(U_4^{2k-2}) + \mathsf L (U_4^2U_2) \big) \\
 & = \big( 2k+1 + 3 \cdot [0,k] \big) \cup \big( 2k-2+ 3 \cdot [0, k-1] + \{3,4,6\} \big) \\
 & = \big( 2k+1 + 3 \cdot [0,k] \big) \cup \big( 2k+2 + 3 \cdot [0,k-1] \big) \cup \big( 2k+4 + 3 \cdot [0,k-1] \big) \\
 & = (2k+1) + \{0,1,3\} + 3 \cdot [0,k-1] \,. \qedhere
\end{aligned}
\]
\end{proof}

\begin{proposition} \label{3.7}
Let $G = C_3^r$ with $r \in [3,4]$, $(e_1, \ldots ,e_r)$  a basis of $G$, $e_0=e_1 + \ldots + e_r$, and $U=(e_1 \cdot \ldots \cdot e_r)^2e_0$.
\begin{enumerate}
\item If $r=3$, then for each $k \in \N$ we have 
      \[
      \mathsf L \big(  U^{6k+1} (-U) \big) = [6k+2, 14k+5] \cup \{14k+7\} \,.
      \]

\item If $r=4$ and $V_1=e_1^2e_2^2(e_1+e_2)$, then for each $k \in \N$ we have
      \[
      \mathsf L (U^{3k}V_1)= (3k+1)+\{0,1,3\} + 3 \cdot [0, 2k-1] \,.
      \]
\end{enumerate}
\end{proposition}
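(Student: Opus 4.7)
The plan for both parts is to enumerate factorizations explicitly via the atoms of $\mathcal B(\supp(\cdot))$ and derive length formulae by parametrization.

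For Part~1, I would use the decomposition $A_k = U^{6k} \cdot (U \cdot (-U))$. First, in $\mathcal B(\{e_1,e_2,e_3,e_0\})$, whose atoms are $e_i^3$, $U$, and $V = e_0^2 e_1 e_2 e_3$, I would parametrize factorizations of $U^n$ by $(n_U, n_V)$ and eliminate $n_{e_i^3}, n_{e_0^3}$ via the multiplicity equations to obtain $\lambda(U^n) = (7n - 4n_U - 2n_V)/3$, subject to $n_U \equiv n_V \pmod{3}$ and inequality constraints; for $n = 6k$ this gives $\mathsf L(U^{6k}) = \{6k + 2j : j \in [0, 4k]\}$. Next, I would enumerate all atoms dividing $U \cdot (-U)$: besides $\pm U$ and the length-$2$ atoms $e_g(-e_g)$, these include the length-$4$ atoms $\pm W_0 = \pm e_0(-e_1)(-e_2)(-e_3)$, the length-$5$ atoms $\pm X_i = \pm\bigl(e_i^2(-e_j)(-e_k)e_0\bigr)$ and $\pm Y_{jk,i} = \pm\bigl(e_j e_k (-e_i)^2 (-e_0)\bigr)$, and the six length-$6$ atoms $\pm W_{ij,k} = \pm\bigl(e_i^2 e_j^2 (-e_k) e_0\bigr)$. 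Matching these against the multiplicities of $U\cdot(-U)$ would yield $\mathsf L(U \cdot (-U)) = \{2, 3, 4, 5, 7\}$; the exclusion of $6$ comes from observing that a $6$-atom factorization would need the shape $(2,2,2,2,2,4)$, but removing $\pm W_0$ from $U \cdot (-U)$ leaves a residual admitting at most a $5$-atom factorization. Thus $\mathsf L(A_k) \supseteq \mathsf L(U^{6k}) + \mathsf L(U \cdot (-U)) = [6k+2, 14k+5] \cup \{14k+7\}$, with the gap at $14k+6$ forced by the missing $6$.

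For the reverse inclusion, $\max \mathsf L(A_k) \le 14k+7$ follows from $\sum_g \min(\mathsf v_g, \mathsf v_{-g}) = 7$ combined with the length-$3$ atom count of $14k$ on the positive residual. To exclude $14k+6$, I would exploit $\mathsf v_{-e_0}(A_k) = 1$: any factorization contains a unique atom whose support meets $-e_0$, necessarily one of $-U$, $\pm W_0$, $-W_{ij,k}$, $-X_i$, $-Y_{jk,i}$, or $e_0(-e_0)$. Case-splitting on this atom and applying the $\lambda(U^n)$-formula to the residual in each case would confirm that the attainable lengths always lie in $[6k+2, 14k+5] \cup \{14k+7\}$.

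For Part~2, since $\mathsf v_{e_1+e_2}(B_k) = 1$, exactly one atom of a factorization of $B_k = U^{3k} V_1$ contains $e_1+e_2$. A direct modular analysis shows these atoms are exactly $V_1$ (length $5$), $V_2 = (e_1+e_2) e_1 e_2 e_3^2 e_4^2 e_0$ (length $8$), and $V_3 = (e_1+e_2) e_3 e_4 e_0^2$ (length $5$). In each case, the residual $B_k/V_j$ factors in $\mathcal B(\{e_1, \ldots, e_4, e_0\})$ over the atoms $\{e_i^3, U, V = e_0^2 e_1 e_2 e_3 e_4\}$; parametrizing by $(n_U, n_V)$ yields the formulae $\lambda_1 = 9k+1-2n_U-n_V$ (case $V_1$), $\lambda_2 = 9k-2n_U-n_V$ (case $V_2$), and $\lambda_3 = 9k+1-2n_U-n_V$ (case $V_3$), each subject to its own modular and inequality constraints inherited from the multiplicity equations. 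Enumerating the feasible pairs, case $V_1$ would give all values $\equiv 1 \pmod 3$ in $[3k+1, 9k+1]$, case $V_2$ would give all values $\equiv 2 \pmod 3$ in $[3k+2, 9k-1]$, and case $V_3$ is contained in case $V_2$; their union is exactly $(3k+1) + \{0, 1, 3\} + 3 \cdot [0, 2k-1]$.

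The main obstacle will be the exclusion of $14k+6$ in Part~1, which requires the exhaustive case analysis on the unique $-e_0$-absorbing atom; the remaining computations, though intricate, are a systematic application of the multiplicity-constraint systems.
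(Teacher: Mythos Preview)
Your outline matches the paper's proof closely: for Part~1 both arguments compute $\mathsf L(U^{6k}) = 6k + 2\cdot[0,4k]$ and $\mathsf L\big(U(-U)\big) = \{2,3,4,5,7\}$, obtain $\supseteq$ via the sumset, and then rule out $14k+6$ by tracking the unique atom containing $-e_0$; for Part~2 both exploit $\mathsf v_{e_1+e_2}(B_k)=1$ to split into the three cases $V_1,V_2,V_3$ and compute the residuals over $\{e_0,\ldots,e_4\}$ (your direct parametrization is arguably tidier than the paper's recursive identity $\mathsf L(U^{3k}V_1) = (\{1\}+\mathsf L(U^{3k})) \cup (\mathsf L(U^3V_1) + \mathsf L(U^{3k-3}))$, and your three $V_j$ are exactly the paper's).

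One point in Part~1 needs more care than you indicate. After removing the $-e_0$-absorbing atom, the residual is \emph{not} of the form $U^n$: if the removed atom is $-W_I$ with $I\subsetneq[1,3]$, the residual still contains $(-e_j)^2$ for each $j\notin I$, and if it is $V_0=e_0(-e_0)$, all six negative copies $(-e_1)^2(-e_2)^2(-e_3)^2$ remain. Hence your ``$\lambda(U^n)$-formula'' does not apply directly; you need a second layer of case-splitting on how these remaining negatives are absorbed (each into a $V_j$ or into some $W_{I'}=e_0\prod_{i\in I'}e_i^2\prod_{j\notin I'}(-e_j)$) before the leftover lies in $\mathcal B(\{e_0,e_1,e_2,e_3\})$. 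The paper short-circuits this by asserting that any factorization of $A_k$ decomposes as a factorization of $U(-U)$ times one of $U^{6k}$, which immediately gives $\mathsf L(A_k)\subseteq \mathsf L\big(U(-U)\big)+\mathsf L(U^{6k})$ and excludes $14k+6$. Two minor bookkeeping notes: your $Y_{jk,i}=e_je_k(-e_i)^2(-e_0)$ coincides with your $-X_i$, so that family is redundant; and in your list of $-e_0$-absorbing atoms, ``$\pm W_0$'' should read ``$-W_0$'' and ``$-Y_{jk,i}$'' should read ``$Y_{jk,i}$''.
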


\begin{proof}
1. Let $r=3$ and $k \in \N$. We set $A_k = U^{6k+1} (-U)$ and $L_k = \mathsf L (A_k)$. For $\nu \in [0,3]$, we set $U_{\nu} = e_{\nu}^3$, $V_{\nu} = (-e_{\nu})e_{\nu}$, and we define $X  = e_0^2e_1e_2e_3$.

First, consider $\mathsf L ( U^{6k} )$.
We observe that $\mathsf Z (U^2) = \{U^2, U_1U_2U_3 X \}$ and   $\mathsf Z (U^3)= \{U^3, UU_1U_2U_3 X,$ $U_0U_1^2U_2^2U_3^2\}$. Furthermore,  $\min \mathsf L ( U^{6k}) = 6k$, $\max \mathsf L ( U^{6k}) = 14k$, $\Delta ( \{e_0, \ldots, e_3\})=\{2\}$, and hence
\[
\mathsf L ( U^{6k} ) = 6k +  2 \cdot [0,4k] \,.
\]
Next, consider $\mathsf L \big( (-U)U \big)$. For subsets $I, J \subset [1,3]$ with  $[1,3] = I \uplus J$, we set
\[
W_I = e_0 \prod_{i \in I} e_i^2 \prod_{j \in J} (-e_j) \,.
\]
Since
\[
\mathsf Z \big( U(-U) \big) = \Big\{ V_0V_1^2V_2^2V_3^2 \Big\} \ \uplus \ \Big\{  W_I (-W_I) \prod_{j \in J} V_j \mid I, J \subset [1,3] \ \text{ with} \ [1,3] = I \uplus J \Big\} \,,
\]
it follows that
\[
\mathsf L \big( (-U)U \big) = \Big\{ 7 \Big\} \ \uplus \ \Big\{2+|J| \mid I, J \subset [1,3] \ \text{ with} \ [1,3] = I \uplus J \Big\} =  \{2,3,4,5,7\} \,,
\]
which implies that
\[
[6k+2, 14k+5] \cup \{14k+7\} = \mathsf L \big( (-U)U \big) + \mathsf L ( U^{6k} ) \subset \mathsf L (A_k) \subset [6k+2, 14k+7] \,.
\]
It remains to show that $14k+6 \notin L_k$. For this, we consider the unique factorization $z \in \mathsf Z (A_k)$ of length $|z|=14k+7$ which has the form
\[
z = \big( U_0 U_1^2 U_2^2 U_3^2)^{2k}  \big( V_0 V_1^2V_2^2V_3^2 \big)  \,.
\]
Assume to the contrary that there is a factorization $z' \in \mathsf Z (A_k)$ of length $|z'| = 14k+6$. If $V_0 \t z'$, then $V_0 V_1^2V_2^2V_3^2 \t z'$ and $z' = V_0 V_1^2V_2^2V_3^2 x$ with $x \in \mathsf Z (U^{6k})$. Whence $|x| \in \mathsf L (U^{6k})$ and $|z'| \in 7 + \mathsf L (U^{6k})$, a contradiction. Suppose that $V_0 \nmid z'$. Then there are $I, J \subset [1,3] \ \text{ with} \ [1,3] = I \uplus J$ such that $W_I (-W_I) \prod_{j \in J} V_j \t z'$ and hence $z' = W_I (-W_I) \big( \prod_{j \in J} V_j \big) x$ with $x \in \mathsf Z (U^{6k})$. Thus $|z'| \in [2,5] + \mathsf L (U^{6k})$, a contradiction.

\smallskip
2. Let $r=4$ and $k \in \N$. We have $\mathsf L (U^2)=\{2,5\}$ and $\mathsf L (U^{3k})=3k + 3 \cdot [0, 2k]$. We define
\[
V_2=(e_1+e_2)e_1e_2e_3^2e_4^2e_0 \,, \ V_3=(e_1+e_2)e_3e_4e_0^2, \quad \text{and} \quad W=e_1 \cdot \ldots \cdot e_4 e_0^2 \,,
\]
and observe that
\[
U^3V_1=U^2 V_2 (e_1^3)(e_2^3)=UV_3 (e_1^3)^2 (e_2^3)^2(e_3^3)(e_4^3)
\]
whence $\mathsf L (U^3V_1)= \{4,5,7,8\}$. Clearly, each factorization of $U^{3k}V_1$ contains exactly one of the atoms $V_1, V_2, V_3$, and it contains it exactly once. Therefore we obtain that
\[
\begin{aligned}
\mathsf L (U^{3k}V_1) & =  \big( \{1\} + \mathsf L (U^{3k}) \big) \cup \big( \mathsf L (U^3V_1) + \mathsf L (U^{3k-3}) \big)  \\
 & = \big( (3k+1) + 3 \cdot [0, 2k] \big) \cup \big( \{4,5,7,8\} + (3k-3) + 3 \cdot [0, 2k-2] \big) \\
 & = \big( (3k+1) + 3 \cdot [0, 2k] \big) \cup \big( (3k+1) + \{0,1,3,4\} + 3 \cdot [0, 2k-2] \big) \\
 & = (3k+1) + \{0,1,3\} + 3 \cdot [0, 2k-1] \,. \qedhere
\end{aligned}
\]
\end{proof}

\begin{proof}[Proof of Theorem \ref{1.1}]
1. (a)\, $\Rightarrow$\, (b) \ This implication is obvious.

(b)\, $\Rightarrow$\, (d) \ Suppose that $\exp (G) = n$, and that
$G$ is not isomorphic to any of the groups listed in (d). We have to show that there is
an $L \in \mathcal L (G)$ which is not an AP.
We distinguish four cases.

(d)\, $\Rightarrow$\, (a) \  Proposition \ref{3.1} shows that, for all groups mentioned,  all sets of lengths are APs. Proposition \ref{2.3} shows that all differences lie in $\Delta^* (G)$.

(c)\, $\Leftrightarrow$\, (d) \ This is the special case for finite groups of \cite[Theorem 1.1]{Ge-Sc16b} (see Remark \ref{3.8}.1).

\noindent
CASE 1: \ $n \ge 5$.

Then \cite[Proposition 3.6.1]{Ge-Sc16a} provides examples of sets of lengths which are not APs.

\noindent
CASE 2: \ $n = 4$.

Since $G$ is not cyclic, it has a subgroup isomorphic to $C_2 \oplus C_4$. Then \cite[Theorem 6.6.5]{Ge-HK06a} shows that $\{2,4,5\} \in \mathcal L (C_2 \oplus C_4) \subset \mathcal L (G)$.

\noindent
CASE 3: \ $n = 3$.

Then $G$ is isomorphic to $C_3^r$ with $r \ge 3$, and Proposition \ref{3.7}.1 provides examples of sets of lengths which are not APs.

\noindent
CASE 4: \ $n = 2$.

Then $G$ is isomorphic to $C_2^r$ with $r \ge 4$, and Proposition \ref{3.6}.1 provides examples of sets of lengths which are not APs.

\smallskip
2. (b)\, $\Rightarrow$\, (a) \ Suppose that $G$ is a subgroup of $C_4^3$ or a subgroup of $C_3^3$. Then Proposition \ref{2.3}.2 implies that $\Delta^* (G) \subset \{1,2\}$, and hence Proposition \ref{2.2}.1 implies the assertion.

(a)\, $\Rightarrow$\, (b) \ Suppose that (b) does not hold. Then $G$ has a subgroup isomorphic to a cyclic group of order $n \ge 5$, or isomorphic to $C_2^4$, or isomorphic to $C_3^4$.
We show that in none of these cases (a) holds.

If $G$ has a subgroup isomorphic to $C_n$ for some $n \ge 5$, then \cite[Proposition 3.6.1]{Ge-Sc16a} shows that (a) does not hold. If $G$ has a subgroup isomorphic to $C_2^4$, then Proposition \ref{3.6}.2 shows that (a) does not hold. If $G$ has a subgroup isomorphic to $C_3^4$, then Proposition \ref{3.7}.2 shows that (a) does not hold.

\smallskip
3. Let $\Delta$ be finite with $\Delta^* (G) \subset \Delta \subset \N$. If  $G$ is cyclic with $|G| \le 6$, then all sets of lengths are AMPs with difference in $\Delta^* (G)$ by  Propositions \ref{3.1}, \ref{3.3}, and \ref{3.4}. If $\exp (G) \ge 7$, then Proposition \ref{3.2} shows that  there are arbitrarily large sets of lengths that are not AMPs with difference $d \in \Delta$.

Suppose that $G$ has rank $r \ge 2$ and $\exp (G) =n  \in [2,6]$. If $n \ge 4$, then  Proposition \ref{3.5} shows that  there are arbitrarily large sets of lengths that are not AMPs with difference $d \in \Delta$.
Thus it suffices to consider elementary $2$-groups and elementary $3$-groups.

Suppose that $G = C_2^r$. If $r \le 3$, then the assertion follows from 1. If $r  \ge 4$, then the assertion follows from Proposition \ref{3.6}.1.

Suppose that $G = C_3^r$. If $r \le 2$, then the assertion follows from 1. If $r \ge 3$, then the assertion follows from Proposition \ref{3.7}.1.
\end{proof}

\begin{proof}[Proof of Corollary \ref{1.2}]
Let $G'$ be an abelian group such that $\mathcal L (G)=\mathcal L (G')$. Then $G'$ is finite by Proposition \ref{2.2} and by \cite[Theorem 7.4.1]{Ge-HK06a}.  By Proposition \ref{2.4}, we have $\mathsf D (G)=\rho_2 (G) = \rho_2 (G')=\mathsf D (G')$, and $\mathcal L (G)$ satisfies one of the properties given in Theorem \ref{1.1} if and only if the same is true for $\mathcal L (G')$. We distinguish three cases.

\smallskip
\noindent
CASE 1: \ $\mathcal L (G)$ satisfies the property in Theorem \ref{1.1}.1.

Then $G$ is cyclic of order $|G| \le 4$ or isomorphic to a subgroup of  $C_2^3$ or isomorphic to a subgroup of  $C_3^2$. Moreover, the same is true for $G'$.
Since $\mathsf D (G) \ge 4$, the assertion follows from Proposition \ref{3.1}.

\smallskip
\noindent
CASE 2: \ $\mathcal L (G)$ satisfies the property in Theorem \ref{1.1}.2.

By CASE 1, we may suppose that $\mathcal L (G)$ and  $\mathcal L (G')$ do not satisfy the property in Theorem \ref{1.1}.1.
Thus $G$ and $G'$, are isomorphic to one of the following groups:  $C_3^3$, $C_2 \oplus C_4$, $C_2^2\oplus C_4$, $C_2 \oplus C_4^2$,  $C_4^2$, or $C_4^3$. Since $C_3^3$ and $C_4^2$ are the only non-isomorphic groups having the same Davenport constant,  it remains to show that $\mathcal L (C_3^3) \ne \mathcal L (C_4^2)$. Since $\max \Delta (C_4^2)=3$ (by \cite[Lemma 3.3]{Ge-Zh15b}) and $\max \Delta (C_3^3)=2$ (by \cite[Proposition 5.5]{Ge-Gr-Sc11a}), the assertion follows.

\smallskip
\noindent
CASE 3: \ $\mathcal L (G)$ satisfies the property in Theorem \ref{1.1}.3.

By CASE 1, we may suppose that $G$ and  $G'$ do not satisfy the property in Theorem \ref{1.1}.1. Thus  $G$ and $G'$ are  cyclic of order five or six. Since $\mathsf D (C_5)=5$ and $\mathsf D (C_6)=6$, the claim follows.
\end{proof}

\begin{remark} \label{3.8}~

1. Let $H$ be an atomic monoid. The system of sets of lengths $\mathcal L (H)$ is said to be additively closed if the sumset $L_1+L_2 \in \mathcal L (H)$ for all $L_1, L_2 \in \mathcal L (H)$. Thus $\mathcal L (H)$ is additively closed if and only if $(\mathcal L (H), +)$ is a commutative reduced semigroup with respect to set addition. If this holds, then $\mathcal L (H)$ is an acyclic semigroup in the sense of Cilleruelo, Hamidoune, and Serra (\cite{Ci-Ha-Se10a}). $\mathcal L (H)$ is additively closed in certain Krull monoids stemming from module theory (\cite[Section 6.C]{Ba-Ge14b}). Examples in a non-cancellative setting can be found in \cite[Theorem 4.5]{Ge-Sc18b} and a more detailed discussion of the property of being additively closed is given in \cite{Ge-Sc16b}.

2. Several properties occurring in Theorem \ref{1.1} can be characterized by further arithmetical invariants such as  the catenary degree $\mathsf c (G)$ and the tame degree $\mathsf t (G)$ (for background see \cite[Sections 6.4 and 6.5]{Ge-HK06a}). For example, the properties (a) - (d) given in  Theorem \ref{1.1}.1. are equivalent to each of the following properties (e) and (f):
\begin{itemize}
\item[(e)] $\mathsf c (G) \le 3 \quad \text{or} \quad \mathsf c (G)=4 \ \text{and} \ \{2, 4\} \in \mathcal L (G)$.

\item[(f)] $\mathsf c (G) \le 3 \quad \text{or} \quad \mathsf t (G)=4$.
\end{itemize}
(use \cite[Theorem A]{Ge-Zh15b}, \cite[Theorem 4.12]{Ga-Ge-Sc15a}, and \cite[Theorem 6.6.3]{Ge-HK06a}).
\end{remark}


\providecommand{\bysame}{\leavevmode\hbox to3em{\hrulefill}\thinspace}
\providecommand{\MR}{\relax\ifhmode\unskip\space\fi MR }
\providecommand{\MRhref}[2]{%
  \href{http://www.ams.org/mathscinet-getitem?mr=#1}{#2}
}
\providecommand{\href}[2]{#2}


\begin{thebibliography}{99}
\parskip=0.7pt

\bibitem{Ba-Ge14b}
N.R. Baeth and A.~Geroldinger, \emph{Monoids of modules and arithmetic of
  direct-sum decompositions}, Pacific J. Math. \textbf{271} (2014), 257 -- 319.

\bibitem{Ba-Sm15}
N.R. Baeth and D.~Smertnig, \emph{Factorization theory: {F}rom commutative to
  noncommutative settings}, J. Algebra \textbf{441} (2015), 475 -- 551.

\bibitem{Ba-Wi13a}
N.R. Baeth and R.~Wiegand, \emph{Factorization theory and decomposition of
  modules}, Amer. Math. Mon. \textbf{120} (2013), 3 -- 34.

\bibitem{B-G-G-P13a}
P.~Baginski, A.~Geroldinger, D.J. Grynkiewicz, and A.~Philipp, \emph{Products
  of two atoms in {K}rull monoids and arithmetical characterizations of class
  groups}, Eur. J. Comb. \textbf{34} (2013), 1244 -- 1268.


\bibitem{Ch11a}
Gyu~Whan Chang, \emph{Every divisor class of {K}rull monoid domains contains a
  prime ideal}, J. Algebra \textbf{336} (2011), 370 -- 377.


\bibitem{Ch-Go-Pe14a}
S.T. Chapman, F.~Gotti, and R.~Pelayo, \emph{On delta sets and their realizable
  subsets in {K}rull monoids with cyclic class groups}, Colloq. Math.
  \textbf{137} (2014), 137 -- 146.

\bibitem{Ci-Ha-Se10a}
J.~Cilleruelo, Y.ould Hamidoune, and O.~Serra, \emph{Addition theorems in
  acyclic semigroups}, Additive Number Theory. Festschrift In Honor of the
  Sixtieth Birthday of Melvyn B. Nathanson (D.~Chudnovsky and G.~Chudnovsky,
  eds.), Springer, 2010, pp.~99 -- 104.

\bibitem{Fa06a}
A.~Facchini, \emph{Krull monoids and their application in module theory},
  Algebras, {R}ings and their {R}epresentations (A.~Facchini, K.~Fuller, C.~M.
  Ringel, and C.~Santa-Clara, eds.), World Scientific, 2006, pp.~53 -- 71.




\bibitem{Fr-Sc10}
M.~Freeze and W.A.~Schmid,
\emph{Remarks on a generalization of the {D}avenport constant},
Discrete Math. \textbf{310}, 2010, 3373 -- 3389.





\bibitem{Fr-Na-Ri19a}
S.~Frisch, S.~Nakato, and R.~Rissner, \emph{Sets of lengths of factorizations
  of integer-valued polynomials on {D}edekind domains with finite residue
  fields}, J. Algebra \textbf{528} (2019), 231 -- 249.

\bibitem{Ga-Ge-Sc15a}
W.~Gao, A.~Geroldinger, and W.A. Schmid, \emph{Local and global tameness in
  {K}rull monoids}, Comm. Algebra \textbf{43} (2015), 262 -- 296.

\bibitem{Ge09a}
A.~Geroldinger, \emph{Additive group theory and non-unique factorizations},
  Combinatorial {N}umber {T}heory and {A}dditive {G}roup {T}heory, Advanced
  Courses in Mathematics CRM Barcelona, Birkh{\"a}user, 2009, pp.~1 -- 86.

\bibitem{Ge16c}
\bysame, \emph{Sets of lengths}, Amer. Math. Monthly \textbf{123} (2016), 960
  -- 988.

\bibitem{Ge-Gr-Sc11a}
A.~Geroldinger, D.J. Grynkiewicz, and W.A. Schmid, \emph{The catenary degree of
  {K}rull monoids {I}}, J. Th{\'e}or. Nombres Bordx. \textbf{23} (2011), 137 --
  169.

\bibitem{Ge-Gr-Yu15}
A.~Geroldinger, D.J. Grynkiewicz, and P.~Yuan, \emph{On products of $k$ atoms
  {II}}, Mosc. J. Comb. Number Theory \textbf{5} (2015), 73 -- 129.

\bibitem{Ge-HK06a}
A.~Geroldinger and F.~Halter-Koch, \emph{Non-{U}nique {F}actorizations.
  {A}lgebraic, {C}ombinatorial and {A}nalytic {T}heory}, Pure and Applied
  Mathematics, vol. 278, Chapman \& Hall/CRC, 2006.

\bibitem{Ge-Sc16a}
A.~Geroldinger and W.~A. Schmid, \emph{A characterization of class groups via
  sets of lengths}, J. Korean Math. Soc., to appear.

\bibitem{Ge-Sc16b}
A.~Geroldinger and W.A. Schmid, \emph{The system of sets of lengths in {K}rull
  monoids under set addition}, Rev. Mat. Iberoam. \textbf{32} (2016), 571 --
  588.

\bibitem{Ge-Sc-Zh17b}
A.~Geroldinger, W.A. Schmid, and Q.~Zhong, \emph{Systems of sets of lengths:
  transfer {K}rull monoids versus weakly {K}rull monoids}, in Rings, Polynomials, and
  Modules, Springer, Cham, 2017, pp.~191 -- 235.

\bibitem{Ge-Sc18b}
A.~Geroldinger and E.D. Schwab, \emph{Sets of lengths in atomic
  unit-cancellative finitely presented monoids}, Colloq. Math. \textbf{151}
  (2018), 171 -- 187.

\bibitem{Ge-Zh15b}
A.~Geroldinger and Q.~Zhong, \emph{The catenary degree of {K}rull monoids
  {II}}, J. Aust. Math. Soc. \textbf{98} (2015), 324 -- 354.

\bibitem{Ge-Zh16a}
\bysame, \emph{The set of minimal distances in {K}rull monoids}, Acta Arith.
  \textbf{173} (2016), 97 -- 120.

\bibitem{Ge-Zh17b}
\bysame, \emph{A characterization of class groups via sets of lengths {II}}, J.
  Th{\'e}or. Nombres Bordx. \textbf{29} (2017), 327 -- 346.


\bibitem{Go19b}
F.~Gotti, \emph{On the system of sets of lengths and the elasticity of
  submonoids of finite-rank free commutative monoids}, Journal of Algebra and its Appl., to appear.

\bibitem{Ka99a}
F.~Kainrath, \emph{Factorization in {K}rull monoids with infinite class group},
  Colloq. Math. \textbf{80} (1999), 23 -- 30.


\bibitem{Pl-Sc18a}
A.~Plagne and W.A. Schmid, \emph{On congruence half-factorial {K}rull monoids
  with cyclic class group}, J. Comb. Algebra, to appear.

\bibitem{Sc09a}
W.A. Schmid, \emph{A realization theorem for sets of lengths}, J. Number Theory
  \textbf{129} (2009), 990 -- 999.

\bibitem{Sm19a}
D.~Smertnig, \emph{{Factorizations in bounded hereditary noetherian prime
  rings}}, Proc. Edinburgh Math. Soc. \textbf{62} (2019), 395 -- 442.

\bibitem{Sm13a}
\bysame, \emph{Sets of lengths in maximal orders in central simple algebras},
  J. Algebra \textbf{390} (2013), 1 -- 43.

\bibitem{Zh18b}
Q.~Zhong, \emph{A characterization of finite abelian groups via sets of lengths
  in transfer {K}rull monoids}, Comm. Algebra \textbf{46} (2018), 4021 --
  4041.

\bibitem{Zh18a}
\bysame, \emph{Sets of minimal distances and characterizations of class groups
  of {K}rull monoids}, Ramanujan J. \textbf{45} (2018), 719 -- 737.

\end{thebibliography}
\end{document}